\newcommand{\erase}[1]{}
\theoremstyle{remark}
\newtheorem{remark}{Remark} 
\newtheorem{theorem}{Theorem}[section]
\newtheorem{definition}[theorem]{Definition}
\newtheorem{proposition}[theorem]{Proposition}
\newtheorem{question}[theorem]{Question}
\newtheorem{lemma}[theorem]{Lemma}
\numberwithin{equation}{section}
\newcommand{\bp}{\begin{pmatrix}}
\newcommand{\ep}{\end{pmatrix}}
\newcommand{\bps}{\begin{smallmatrix}}
\newcommand{\eps}{\end{smallmatrix}}
\def\C{{\mathbb C}}
\def\Z{{\mathbb Z}}
\def \0{{\bf 0}}
\def \1{{\bf 1}}
\def \mf#1#2#3#4{
\xymatrix{{#1}\  \ar@<0.4ex>[r]^{{#2}} & \ {#4}
\ar@<0.4ex>[l]^{{#3}}
}
}
\def \mfs#1#2#3#4{\!
\xymatrix@C=1.5em{{#1} \! \ar@<0.2ex>[r]^{{#2}} & \! {#4}
\ar@<0.2ex>[l]^{{#3}}
}
\!}
\def \mfl#1#2#3#4{
\xymatrix@C=2.6em{{#1}\  \ar@<0.4ex>[r]^{{#2}} &\  {#4}
\ar@<0.2ex>[l]^{{#3}}
}
}
\def \mfss#1#2#3#4{\!
\xymatrix@C=1.5em{{#1} \ar@<0.3ex>[r]^{{#2}} & {#4}
\ar@<0.3ex>[l]^{{#3}}
}
\!}
\newcommand{\deeq}{\mathbin{\hbox{$=$ \lower 1.7pt\rlap{\hskip -8.5pt .}}}} 
\begin{document}
\title{The conjugacy problem for positive homogeneously presented monoids.} 

\author{Tadashi Ishibe}
\begin{abstract}
Let $M$ be a positive homogeneously presented monoid ${\langle L \mid R\,\rangle}_{mo}$. If $M$ satisfies the cancellation condition and carries certain particular elements similar to the \emph{fundamental elements} in Artin monoids, then the solvability of the conjugacy problem for $M$ implies that in the corresponding group ${\langle L \mid R\,\rangle}$. In addition to these conditions, if $M$ satisfies the LCM condition (i.e. any two elements $\alpha$ and $\beta$ in $M$ admit the left (resp.~right) least common multiple), then the solution to the conjugacy problem for $M$ is known. We will give two kinds of examples that do not satisfy only the LCM condition. For these examples, we will give a solution to the conjugacy problem by improving the method given by E. Brieskorn and K. Saito.
\end{abstract}

  







\maketitle

\section{Introduction} 
Let $M$ be a positive homogeneously presented monoid ${\langle L \mid R\,\rangle}_{mo}$. In this article, for the monoid $M$ that satisfies the cancellation condition and carries certain particular elements similar to the \emph{fundamental elements} in Artin monoids, we will discuss how to solve the conjugacy problem. In addition to these two conditions, if $M$ satisfies the LCM condition, the solution to the conjugacy problem has already been known (\cite{[B-S]}, \cite{[D-P]}, \cite{[P]}). We will deal with two kinds of examples  $G^+_{\mathrm{B_{ii}}}$ and $G^{+}_{m, n}$ that do not satisfy only the LCM condition. By improving the method in \cite{[B-S]}, we will give a solution to the conjugacy problem for them. As a consequence, the conjugacy problem in the corresponding groups can be solved.\par
 In \cite{[B-S]}, the notion of the \emph{Artin groups} was introduced as a generalization of the notion of the braid groups (\cite{[A1]}, \cite{[A2]}). In \cite{[De]}, the Artin groups were called by the alias of the \emph{generalized braid groups}. The Artin group appears as the fundamental group of the regular orbit space of a finite reflection group $W$ (\cite{[B]}). E. Brieskorn gave a special presentation of the fundamental group whose defining relations correspond to the finite Coxeter diagram of type $W$. The monoid defined by that presentation is called \emph{Artin monoid} (\cite{[B-S]}). In \cite{[B-S]}, by refering to the method in \cite{[G]}, they showed that the Artin monoid is \emph{cancellative} (i.e. $axb = ayb$ implies $x = y$ ) and that, for any two elements in the monoid, left (resp. right) common multiples exist. Hence, due to the \"Ore's criterion (see \cite{[C-P]}), the Artin monoid injects in the corresponding Artin group. Furthermore, they showed that the Artin monoid satisfies the LCM condition (see \cite{[B-S]} \S4). By using this property, in the monoid, they defined a particular element $\Delta$, the \emph{fundamental element}, as the least common multiple of all the generators. We note that the cancellativity and the existence of the least common multiple are the consequence of the \emph{reduction lemma} (see \cite{[B-S]} \S2). By using these properties, they proved that the solvability of the conjugacy problem in the Artin monoid implies that in the corresponding Artin group. Through the solution of the conjugacy problem for the Artin monoid, the conjugacy problem for the Artin group was solved. 
\par 
After this work, in the late $90's$, the notion of Artin group (resp. Artin monoid) is generalized by French mathematicians (\cite{[D-P]}, \cite{[D1]}), which is called the \emph{Garside group} (resp. \emph{Garside monoid}). The Garside group is defined as the group of fractions of a Garside monoid. A Garside monoid is a finitely generated monoid that satisfies the following conditions: $\mathrm{i})$ the monoid is cancellative; $\mathrm{ii})$ \emph{atomic} (i.e. the expressions of a given element have bounded lengths); $\mathrm{iii})$ the LCM condition is satisfied; $\mathrm{iv})$ a \emph{Garside element} exists. Hence, the Garside monoid trivially satisfies the \"Ore's criterion. In \S2, we will generalize the notion of fundamental element and, for simplicity, also call it a \emph{fundamental element}. Namely, we call an element $\Delta$ in a positive homogeneously presented monoid $M = {\langle L \mid R\,\rangle}_{mo}$ fundamental element, if there exists a permutation $\sigma$ of the set $L/\!\!\sim$ (: =the image of the set $L$ in $M$) such that 
\[
\Delta = s\cdot\Delta = \Delta\cdot \sigma(s)
\]
holds for all generators $s \in L/\!\!\sim$. 
\footnote
{In \S2, we will deal with a positively presented atomic monoid. Here we deal with a positive homogeneously presented monoid for simplicity.}
This notion does not always coincide with the least common multiple of all the generators. We note that, under the assumption that the monoid is cancellative, an element $\Delta$ in the monoid is a Garside element if and only if the element $\Delta$ is a fundamental element (Proposition 2.2). We will show that, for a Garside monoid, there is a unique fundamental element $\widehat{\Delta}$ that devides any fundamental elements in the monoid from the left and the right (Proposition 2.5). The set $\mathcal{F}(M)$ of all the fundamental elements of $M$ forms a subsemigroup of $M$ such that $\mathcal{QZ}(M)\mathcal{F}(M)\!=\!\mathcal{F}(M)\mathcal{QZ}(M)\!=\!\mathcal{F}(M)$, where $\mathcal{QZ}(M)$ is the set of all the quasi-central elements of $M$. \footnote
{An element $\Delta\!\in\!\! M$ is called quasi-central (\cite[\!7.1]{[B-S]})
if $s\!\cdot\!\Delta\!=\! \Delta\!\cdot\! \sigma(s)$ 
for $s\in L$.\!\!} From Proposition 2.5, we say that the two-sided idealistic subsemigroup $\mathcal{F}(M) ( \subseteq \mathcal{QZ}(M))$ is singly generated by $\widehat{\Delta}$. Moreover, in Proposition 2.8, we will show that any indecomposable quasi-central element $\Delta$ divides $\widehat{\Delta}$ from the left and the right (we call this perperty \emph{tame}). \footnote
{A positive homogeneously presented cancellative monoid that carries a fundamental element is called tame, if for any indecomposable quasi-central element $\Delta_0$ in the monoid there exists a minimal fundamental element $\Delta$ such that the element $\Delta_0$ divides $\Delta$ from the left and the right.} For a Garside group, the conjugacy problem can be solved (\cite{[P]}), by improving the method in \cite{[B-S]}. In \cite{[D-P]}, by showing a Garside group is a biautomatic group, they showed that the conjugacy problem is solvable. In \cite{[P]}, the author showed that the LCM condition implies the GCD condition (i.e. for arbitrary two elements in the monoid the left (resp. right) greatest common divisors exist). Then, due to the cancellativity and the LCM condition and the GCD condition, for an arbitrary element in a Garside group the author succeeded in constructing a normal form of it. As a consequence, the conjugacy problem in a Garside group is solved. In \S2, we will show that, in a positively presented atomic monoid that carries a fundamental element, the LCM condition is satisfied if and only if the GCD condition is satisfied (Proposition 2.4).\par
 Since the LCM condition is a strong assumption, some Zariski-van Kampen monoids do not satisfy the LCM condition (\cite{[B-M]}\cite{[I1]}\cite{[I2]}\cite{[S-I]}). As far as we know, for non-abelian positive homogeneously presented monoids that do not satisfy the condition $\mathrm{iii})$, there are few examples for which the cancellativity of them has been shown, since the pre-existing technique  is not perfect (\cite{[G]}\cite{[B-S]}\cite{[D2]}\cite{[D3]}). In the papers \cite{[I1]}\cite{[I2]}\cite{[S-I]}, we deal with non-abelian positive homogeneously presented monoids that do not satisfy only the condition $\mathrm{iii})$ and succeeded in showing cancellativity of them by improving the method. Besides the papers \cite{[I1]}\cite{[I2]}\cite{[S-I]}, for non-abelian positive homogeneously presented monoids that do not satisfy the condition $\mathrm{iii})$, there are known a few examples for which the cancellativity of them has been shown (e.g. \cite{[D4]}). We note that the cancellativity of these examples are shown by the pre-existing method (the consequence of the reduction lemma). In \cite{[I1]}, for the monoid, called the type $\mathrm{B_{ii}}$, that does not satisfy only the condition $\mathrm{iii})$, the author has solved the word problem and the conjugacy problem, and determined the center of it by showing the monoid injects in the corresponding group. In \cite{[I3]}, the author constructed other examples $G^{+}_{m, n}$ that do not satisfy only the condition $\mathrm{iii})$. Due to the cancellativity and the existence of fundamental elements, the word problem for them can be solved and the center of them was determined. As a consequence, the word problem in the corresponding groups can be solved and the center of them was determined. The conjugacy problem for them has not been solved yet since the former solution (\cite{[B-S]}, \cite{[D-P]}, \cite{[P]}) depends on the existence of the least common multiple. We need to extend the technique to solve the conjugacy problem. We remark that, in \cite{[I1]}, the author gave a solution to the conjugacy problem by using a very special type of normal form in the monoid. It is desirable that we give other solution that does not depend on the special context.\par
In this article, we will give a solution to the conjugacy problem in the monoids $G^{+}_{\mathrm{B_{ii}}}$ and $G^{+}_{m, n}$ by improving the method in \cite{[B-S]}. Let $M'$ be a positive homogeneously presented cancellative monoid that does not satisfy only the condition $\mathrm{iii})$. From the proposition 2.4, the GCD condtion for $M'$ is also not satisfied. Since neither the LCM condition nor the GCD condition is satisfied, we explore a solution to the conjugacy problem for $M'$ that depends on the reduction lemma. We remark that the reduction lemma knows the data for not only the cancellativity of the monoid but also the structure of the left (resp. right) minimal common multiples. For an element $w$ in a positive homogeneously presented cancellative monoid that carries a fundamental element, we call $A$ in the monoid a \emph{right transit element} of $w$, if the element $A$ is not $\varepsilon$ and there exists an element $Q$ such that an equation 
\[
AQ \deeq wA
\]
 holds. A right transit element $A$ of $w$ is called \emph{minimal}, if any right transit element of $w$ dividing $A$ from the left coincides with $A$ itself. The set of all right minimal transit elements of $w$ shall be denoted by $\mathrm{Trans^{min}}(w)$. Due to the homogeneity of the defining relations of the monoid, to solve the conjugacy problem for it, it is sufficient to show that, for an arbitrary element $w$ in it, we decide the set $\mathrm{Trans^{min}}(w)$ explicitly. In \S3, due to the reduction lemma for the monoid $G^{+}_{\mathrm{B_{ii}}}$, we will decide the set $\mathrm{Trans^{min}}(w)$ for an arbitrary element $w$ in it explicitly. In the papers \cite{[B-S]}, \cite{[P]}, the authors showed that for an arbitrary element $w$ in the monoid the set $\mathrm{Trans^{min}}(w)$ only consists of left divisors of the unique fundamental element $\widehat{\Delta}$ in the monoid, which will be called the property $(\mathrm{P}(w; \widehat{\Delta}))$ with respect to $w$. In \S4, due to the reduction lemma for the monoid $G^{+}_{m, n}$, we will show that, for an arbitrary element $w$ in the monoid, the property $(\mathrm{P}(w; \widehat{\Delta}))$ is satisfied. We remark that the idealistic subsemigroup $\mathcal{F}(G^{+}_{m, n}) (\subseteq \mathcal{QZ}(G^{+}_{m, n}))$ is singly generated and the monoid $G^{+}_{m, n}$ is tame. By generalizing this framework, we conjecture that, for a positive homogeneously presented cancellative tame monoid whose idealistic subsemigroup is finitely generated, the conjugacy problem in it is solvable.

\section{Positively Presented Monoid}

In this section, we first recall from \cite{[B-S]} some basic definitions and notations. Secondly, for a positive finitely presented group 
\[
G = \langle L \!\mid \!R\rangle, 
\]
we associate a monoid defined by it. We will extend a basic notion in \cite{[B-S]}, \emph{fundamental element}, for a positive presented atomic monoid. For a positively presented atomic cancellative monoid, we will show some convenient propositions.\footnote
{In this article, we only deal with the examples whose presentations are positive homogeneous. For some application, in this section we deal with a positively presented atomic monoid.} Lastly, by using a fundamental element $\Delta$ in the associated monoid, we will discuss the conjugacy problem in the group $G = \langle L \!\mid \!R\rangle$.\par
 Let $L$ be a finite set. Let $F(L)$ be the free group generated by $L$, and let $L^*$ be the free monoid generated by $L$ inside $F(L)$. We call the elements of $F(L)$ \emph{words} and the elements of $L^*$ \emph{positive words}. The empty word $\varepsilon$ is the identity element of $L^*$.
If two words $A$, $B$ are identical letter by letter, we write $A \equiv B$.
 Let $G = \langle L \!\mid \!R\rangle$ be a positively presented group (i.e. the set $R$ of relations consists of those of the form $R_i\!=\!S_i$ 
where 
$R_i$ and $S_i$ are positive words), where $R$ is the set of relations. We often denote the images of the letters and words under the quotient homomorphism 
$$\ F(L)\ \longrightarrow\ G$$
by the same symbols and the equivalence relation on elements $A$ and $B$ in $G$ is denoted by $A = B$.
 Secondly, we recall some terminologies and concepts on a monoid $M$. An element $U\!\in\! M$ is said to \emph{divide} $V\!\in\! M$ from the left (resp.~right), and denoted by $U|_lV$ (resp.~$U|_rV$), if there exists $W\!\in\! M$ 
such that $V\!=\! UW$ (resp.~$V\!=\! WU$). 
We also say that $V$ is \emph{left-divisible} (resp.\emph{right-divisible}) by $U$, or $V$ is 
a \emph{right-multiple} (resp.\emph{left-divisible}) of $U$. We say that $M$ satisfies \emph{the left {\rm (resp.} right{\rm )} 
LCM condition}, if for any two elements $U, V$ in $M$, there always exists their left (resp.~right) least common multiple. We say that $M$ satisfies \emph{the left {\rm (resp.} right{\rm )} GCD condition}, if for any two elements $U, V$ in $M$, there always exists their left (resp.~right) greatest common divisor. 
Lastly, we consider four operations on the set of subsets of a monoid $M$. For a subset $J$ of $M$, we put
\[
\mathrm{cm}_{r}(J) := \{ u \in M \mid j \,|_l \,u ,\, \forall j \in J \},\,\,\,\,\,\,\,\,\,\,\,\,\,\,\,\,\,\,\,\,\,\,\,\,\,\,\,\,
\]
\[
\,\mathrm{cd}_{l}(J) := \{ u \in M \mid u \,|_l \,j ,\, \forall j \in J \},\,\,\,\,\,\,\,\,\,\,\,\,\,\,\,\,\,\,\,\,\,\,\,\,\,\,\,
\]
\[
\mathrm{min}_{r}(J) := \{ u \in J \mid \exists v \in J \,\,\mathrm{s.t.}\,\, v \,|_l \,u \Rightarrow v = u \},
\]
\[
\mathrm{max}_{l}(J) := \{ u \in J \mid \exists v \in J \,\,\mathrm{s.t.}\,\, u \,|_l \,v \Rightarrow u = v \}.
\]
We note that $\mathrm{cm}_{r}(J)$, $\mathrm{min}_{r}(J)$, and $\mathrm{max}_{l}(J)$ may be the empty set. We consider their compositions by
\[
\mathrm{mcm}_{r}(J) := \mathrm{min}_{r}(\mathrm{cm}_{r}(J)),
\]
\[
\mathrm{mcd}_{l}(J) := \mathrm{max}_{l}(\mathrm{cd}_{l}(J)).
\]
In the same way, we define two operations $\mathrm{mcm}_{l}(J)$ and $\mathrm{mcd}_{r}(J)$. If $M$ satisfies the right LCM condition, then we should write $\mathrm{mcm}_{r}(J)$ by $\{ \mathrm{lcm}_{r}(J) \}$. If $M$ satisfies the left GCD condition, then we should write $\mathrm{mcd}_{l}(J)$ by $\{ \mathrm{gcd}_{l}(J) \}$.
Next, we recall from \cite{[S-I]}, \cite{[I1]}, \cite{[I3]} some terminologies and concepts on positively presented monoid. And we refer to some concepts from \cite{[D-P]}, \cite{[D1]}.\par 
\ \ \\
\begin{definition}
{\it Let 
$G = \langle L \!\mid \!R\rangle$
be a positively finitely presented group, where $L$ is the set of generators 
(called alphabet) and $R$ is the set of relations. 
Then we associate a monoid $G^+ = {\langle L \mid R\rangle}_{mo}$ defined as the quotient of the free monoid $L^*$ generated by $L$ by the equivalence relation  defined as follows:  \par 
$\mathrm{i})$ two words $U$ and $V$ in $L^*$ are called \emph{elementarily 
equivalent} if either $U \equiv V$ or $V$ is obtained from $U$ by substituting 
a substring $R_i$ of $U$ by $S_i$ where $R_i\!=\!S_i$ is a relation of $R$ 
($S_i = R_i$ is also a relation if $R_i=S_i$ is a relation), \par 
$\mathrm{ii})$ two words $U$ and $V$ in $L^*$ are called \emph{equivalent}, denoted by $U \deeq V$, if there exists 
a finite sequence $U\! \equiv \!W_0, W_1,\ldots, W_n\! \equiv \!V $ of words in $L^*$ for $n\!\in\!\Z_{\ge0}$
such that $W_i$ is elementarily equivalent to $W_{i-1}$ for $i=1,\ldots, n$.\par1. We say that $G^+$ is \emph{atomic}, if there exists a map:
\[
\nu\ : \ G^+\ \longrightarrow\ \Z_{\ge0}
\]
such that $\mathrm{i})$ $\nu(\alpha) = 0$\,$\Longleftrightarrow$\,$\alpha = 1$ and $\mathrm{ii})$ an inequality:
\[
\nu(\alpha\beta) \geq \nu(\alpha) + \nu(\beta)
\]
is satisfied for any $\alpha, \beta \in G^{+}$. If $G^+ = {\langle L \mid R\rangle}_{mo}$ is a positive homogeneously presented monoid (i.e. the set $R$ of relations consists of those of the form $R_i = S_i$ where $R_i$ and $S_i$ are positive words of the same length ), it is clear that $G^+$ is an atomic monoid. An element $\alpha \not= 1$ in $G^{+}$ is called an \emph{atom} if it is indecomposable, namely, $\alpha = \beta \gamma$ implies $\beta = 1$ or $\gamma = 1$.\par
2. We suppose that $G^{+}$ satisfies the condition of atomic monoid. Here, we write the set of generators $L$ by $\{ g_1, g_2, \ldots, g_{m} \}$. If, for some positive word\\
 $w(g_1, \ldots, g_{i-1}, g_{i+1}, \ldots, g_m)$ (i.e. a word that is written by the generators except $g_i$), $g_i = w(g_1, \ldots, g_{i-1}, g_{i+1}, \ldots, g_m)$ is a relation of $R$, then we call the generator $g_i$ a \emph{dummy generator}. We note that, in the set $R$, a relation that has a form of $g_i = w(g_1, \ldots, g_i, \ldots, g_m)$ must be the form $g_i = w(g_1, \ldots, g_{i-1}, g_{i+1}, \ldots, g_m)$ or a trivial form $g_i = g_i$, because we suppose here that $G^{+}$ is an atomic monoid. We denote by $L'$ the set of all dummy generators of the monoid $G^+$. We put $\widetilde{L} := L \setminus L'$. We remark that the set $\widetilde{L}$ may be the empty set. We note that, if $G^+$ is an atomic monoid, the image of the set $\widetilde{L}$ in $G^{+}$ is equal to the set of all the atoms.\par 
3. We say that $G^+$ is \emph{cancellative}, if 
an equality $AXB \deeq \! AYB$  for\\
 $A, B, X, Y \!\in G^+$ implies $X \deeq \! Y$.\par

4. The natural homomorphism $\pi:  G^+\to G$ will be called the \emph{localization homomorphism}. 

5. An element $\Delta \in G^+$ is called \emph{quasi-central} (also see [B-S] 7.1), if there exists a permutation $\sigma$ of $\widetilde{L}$  such that 
\[
s\cdot\Delta \deeq  \Delta\cdot \sigma(s)
\]
holds for all generators $s \in \widetilde{L}$. 
 The set of all quasi-central elements is denoted by $\mathcal{QZ}(G^+)$. We note that, if the monoid $G^{+}$ is a cancellative monoid, there exists a unique permutation $\sigma_\Delta$ for a quasi-central element $\Delta$. The order of an element $\sigma_{\Delta}$ in the permutation group $\mathfrak{S}(\widetilde{L})$ is denoted by $\mathrm{ord}(\sigma_{\Delta})$. Note that $\Delta^{\mathrm{ord}(\sigma_{\Delta})}$ belongs to the center $\mathcal{Z}(G^+)$ of the monoid $G^+$ and $\varepsilon \in \mathcal{QZ}(G^+)$.\par
6. An element $\Delta$$\in G^+$ is called a \emph{Garside element}
if the sets of left- and right-divisors of $\Delta$ coincide, generate $G^{+}$, and are finite in number.

7. An element $\Delta$ in $G^+$ is called a \emph{fundamental element}
if there exists a permutation $\sigma$ of $\widetilde{L}$ such that, for any $s \in \widetilde{L}$, there exists 
$\Delta_s$$\in G^+$ satisfying the following relation: 
\[
\Delta \deeq s \cdot \Delta_s \deeq  \Delta_s \cdot \sigma(s).
\]
We write the set of all fundamental elements of $G^+$ by $\mathcal{F}(G^+)$. Note that $\varepsilon \not\in \mathcal{F}(G^+)$. If the monoid $G^{+}$ is a cancellative monoid, there exists a unique permutation $\sigma_\Delta$ for a fundamental element $\Delta$. Note that $\Delta^{\mathrm{ord}(\sigma_{\Delta})}$ belongs to the center $\mathcal{Z}(G^+)$ of the monoid $G^+$. It is easy to show that
\[
\mathcal{F}(G^+)\mathcal{QZ}(G^+)=\mathcal{QZ}(G^+) \mathcal{F}(G^+)= \mathcal{F}(G^+).
\]

8. A fundamental element $\Delta$ is called \emph{minimal}, if any fundamental element dividing $\Delta$ from right or left coincides with $\Delta$ itself.

9. A quasi-central element $\Delta$ is called  \emph{indecomposable}, if it does not decompose into a product of two non-trivial quasi-central elements. We note that the identity element $\varepsilon$ in not indecomposable.}


\end{definition}
\begin{remark}{\it  Let $G^+$ be a positively presented atomic monoid ${\langle L \mid R\,\rangle}_{mo}$ that satisfies the cancellation condition. Let $\Delta$ be a non-trivial quasi-central element. We put
\[
L(\Delta) := \{ s \in \widetilde{L} \mid s \,|_l \,\Delta  \}.
\]
Then, we have an equality
\[
\forall s \in L(\Delta),\,\, \Delta \deeq s\cdot \Delta_s \deeq \Delta_s \cdot \sigma_{\Delta}(s).
\]
For each $s$ in $L(\Delta)$, the quotient can be uniquely determined in the monoid $G^+$. Here, we write it by $\Delta_s$. From this property, we easily show that the sets of left- and right-divisors of $\Delta$ coincide.}
\end{remark}
\ \ \\
\begin{remark}{\it Let $G^+$ be a positively presented atomic monoid. The two-sided idealistic subsemigroup $\mathcal{F}(G^+) (\subseteq \mathcal{QZ}(G^+))$ is not finitely generated in general ( See \cite{[I1]} Remark 5.9 ). }
\end{remark}
\ \ \\
From the definitions, it follows that the notion of fundamental elements is equivalent to the notion of Garside elements. 
\ \ \\
\begin{proposition}
{\it  Let $G^+$ be a positively presented atomic monoid ${\langle L \mid R\,\rangle}_{mo}$ that satisfies the cancellation condition. Then:  \par 
 (1)  An element $\Delta$ in $G^+$ is a fundamental element if and only if $\Delta$ is a Garside element. \par 
 (2)  Let $\Delta_1$ and $\Delta_2$ be quasi-central elements in $G^+$ that satisfies a relation: there exists an element $d$ in $G^+$ such that $\Delta_{1} d \deeq \Delta_2$. Then, the element $d$ is a quasi-central element in $G^+$\\}
\end{proposition}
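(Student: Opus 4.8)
The plan is to prove the two parts separately, beginning with (2), whose argument is purely computational and whose mechanism (cancel a common factor, then exploit that a defining permutation is a bijection) will reappear in (1).

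\emph{Part (2).} Since $G^+$ is cancellative, the permutations attached to $\Delta_1$ and $\Delta_2$ are unique; call them $\sigma_1$ and $\sigma_2$, so that $s\Delta_1\deeq\Delta_1\sigma_1(s)$ and $s\Delta_2\deeq\Delta_2\sigma_2(s)$ for every $s\in\widetilde{L}$. I would substitute $\Delta_2\deeq\Delta_1 d$ into both sides of the second relation: the left-hand side becomes $s\Delta_1 d\deeq\Delta_1\sigma_1(s)d$, while the right-hand side is $\Delta_1 d\,\sigma_2(s)$. Left-cancelling $\Delta_1$ gives $\sigma_1(s)d\deeq d\,\sigma_2(s)$ for all $s$. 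Because $\sigma_1$ is a bijection of $\widetilde{L}$, writing $t=\sigma_1(s)$ turns this into $t\,d\deeq d\,(\sigma_2\circ\sigma_1^{-1})(t)$ for all $t\in\widetilde{L}$, which is exactly the statement that $d$ is quasi-central with permutation $\tau=\sigma_2\circ\sigma_1^{-1}$.

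\emph{Part (1), fundamental $\Rightarrow$ Garside.} First I would observe that a fundamental element is quasi-central: from $\Delta\deeq s\Delta_s\deeq\Delta_s\sigma(s)$ one gets $s\Delta\deeq s\Delta_s\sigma(s)\deeq\Delta\,\sigma(s)$, with the \emph{same} $\sigma$. Remark 2.1 then tells us the left- and right-divisors of $\Delta$ coincide. Since $L(\Delta)=\widetilde{L}$, the divisor set contains every atom and hence generates $G^+$; and by atomicity any divisor $u$ of $\Delta$ obeys $\nu(u)\le\nu(\Delta)$, while over the finite alphabet $L$ there are only finitely many elements of bounded $\nu$ (each is a product of atoms, and $\nu$ bounds the number of factors). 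These are precisely the three defining properties of a Garside element.

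\emph{Part (1), Garside $\Rightarrow$ fundamental.} Let $D$ denote the common divisor set. As $D$ generates $G^+$ and atoms are indecomposable, every atom already lies in $D$, so each $s\in\widetilde{L}$ divides $\Delta$ on both sides. Fixing $s$, write $\Delta\deeq s\Delta_s$; then $\Delta_s\,|_r\,\Delta$, hence $\Delta_s\,|_l\,\Delta$, so $\Delta\deeq\Delta_s c_s$ for a unique $c_s$ and $s\Delta_s\deeq\Delta_s c_s$. The assignment $s\mapsto c_s$ is injective: $c_s\deeq c_{s'}$ forces $\Delta_s\deeq\Delta_{s'}$ by right-cancellation, and then $s\deeq s'$. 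Setting $\sigma(s):=c_s$ would exhibit $\Delta$ as fundamental, since $\Delta\deeq s\Delta_s\deeq\Delta_s\sigma(s)$ is the required relation — provided each $c_s$ is again an atom, in which case injectivity together with the finiteness of $\widetilde{L}$ upgrades $\sigma$ to a permutation.

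\emph{Main obstacle.} The crux is therefore that the right-complement $c_s$ of an atom is itself an atom. I would exploit the monoid identity $s\Delta\deeq\Delta c_s$, obtained by multiplying $\Delta\deeq s\Delta_s\deeq\Delta_s c_s$ on the left by $s$, which expresses $c_s$ as the image of $s$ under conjugation by $\Delta$. In the homogeneous setting — which covers every example studied in this paper — additivity of word length $\ell$ makes the claim immediate: $\ell(c_s)=\ell(\Delta c_s)-\ell(\Delta)=\ell(s\Delta)-\ell(\Delta)=1$, and an element of length one is an atom; this is where I would concentrate the argument. For a general atomic monoid the same conclusion should follow from analysing how conjugation by $\Delta$ acts on the divisibility poset of $D$, but the left–right asymmetry of the complement maps makes this the delicate step, and since the present examples are homogeneous the length argument already suffices for our purposes.
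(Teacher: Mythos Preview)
Your arguments for Part~(2) and for the forward direction of Part~(1) are correct; the paper itself simply refers to \cite{[I3]} for this proposition, so there is no in-paper proof to compare against.

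For ``Garside $\Rightarrow$ fundamental'' you correctly isolate the key step --- that the complement $c_s$ of an atom is again an atom --- but you prove it only under homogeneity, whereas the proposition is stated for arbitrary atomic cancellative monoids. The gap can be filled without any length function. From $s\Delta \deeq \Delta c_s$ for each atom $s$, cancellativity lets you extend $s\mapsto c_s$ to a monoid endomorphism $\phi$ of $G^+$ characterized by $w\Delta\deeq\Delta\,\phi(w)$ for all $w$. Running the same construction from the right (each atom $t$ right-divides $\Delta$, say $\Delta\deeq\Delta^{t}t$; then $\Delta^{t}$ is also a left-divisor, giving $\Delta\deeq b_t\Delta^{t}$ and hence $\Delta t\deeq b_t\Delta$) produces an endomorphism $\psi$ with $\Delta w\deeq\psi(w)\Delta$. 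A one-line check gives $\phi\circ\psi=\psi\circ\phi=\mathrm{id}$, so $\phi$ is an automorphism of $G^+$; since monoid automorphisms preserve atoms, each $c_s=\phi(s)$ lies in $\widetilde{L}$, and your injectivity argument then makes $\sigma:=\phi|_{\widetilde{L}}$ a permutation. With this in place, your proof goes through in the stated generality.
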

\begin{proof} 
See \cite{[I3]} \S2.
\end{proof}
\ \ \\
\begin{proposition}
{\it  Let $G^+$ be a positively presented atomic cancellative monoid\\
 ${\langle L \mid R\,\rangle}_{mo}$ that carries a fundamental element $\Delta$ and satisfies the condition that any two letters $\alpha, \beta$ in $\widetilde{L}$ admit the left (resp.~right) least common multiple. \\
Then, the monoid $G^+$ satisfies the left (resp.~right) LCM condition.}
\end{proposition}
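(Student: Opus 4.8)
The plan is to establish the right LCM condition; the left case follows by applying the identical argument to the opposite monoid ${\langle L \mid R^{\mathrm{op}}\rangle}_{mo}$, which is again atomic and cancellative, carries the fundamental element obtained by reversing $\Delta$ (with permutation $\sigma^{-1}$), and in which any two atoms admit a right least common multiple precisely because any two atoms of $G^+$ admit a left one. I would first record that $\Delta$ is quasi-central: from $\Delta \deeq s\cdot\Delta_s \deeq \Delta_s\cdot\sigma(s)$ one reads off $s\cdot\Delta \deeq \Delta\cdot\sigma(s)$, and, extending $\sigma$ multiplicatively (well defined by cancellativity), $w\Delta \deeq \Delta\,\sigma(w)$ for every $w \in G^+$. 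Writing $w = s_1\cdots s_n$ as a product of atoms and combining $\Delta \deeq s\cdot\Delta_s$ with this relation, a one-line induction on $n$ gives $w\,|_l\,\Delta^n$; hence every element left-divides a power of $\Delta$. In particular any two elements $U,V$ share the common right-multiple $\Delta^N$ for $N$ large, so $\mathrm{cm}_{r}(\{U,V\}) \neq \varnothing$, and, the monoid being atomic, $\mathrm{mcm}_{r}(\{U,V\}) \neq \varnothing$.

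It then remains to show that $\mathrm{mcm}_{r}(\{U,V\})$ is a singleton, equivalently that there is a common right-multiple left-dividing every common right-multiple; the two are equivalent here because atomicity forces a minimal common right-multiple below any given one, so a unique minimal one is automatically least. I would prove this by induction on the total length $\nu(U)+\nu(V)$. The base cases are $U \deeq \varepsilon$ or $V \deeq \varepsilon$, where the least common right-multiple is the other element, and $U,V$ both atoms, which is exactly the hypothesis. For the inductive step one peels off a first letter, $U \deeq s\cdot U'$: any common right-multiple of $U$ and $V$ is in particular one of the atom $s$ and of $V$, so one would like to form $\mathrm{lcm}_{r}(s,V)$ first, cancel the contribution of $s$ on the left (where cancellativity is essential), and then recurse on a shorter pair.

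The difficulty — and what I expect to be the main obstacle — is that this peeling is not length-monotone: the complement of $\mathrm{lcm}_{r}(s,t)$ of two atoms can be long, so after one reduction the recursion lands on a pair whose total length need not have decreased, and the naive induction does not close. The genuine content is therefore the \emph{coherence} of the atom-level data, namely that the minimal common right-multiples of atoms fit together compatibly (a cube/associativity condition), so that the iterated formation of $\mathrm{lcm}_{r}$ by successive peeling is well defined and independent of the order in which letters are removed. This is precisely the structural information encoded in the reduction lemma, which governs the minimal common multiples of pairs of atoms, while the finiteness of the set of divisors of the Garside element $\Delta$ (Proposition 2.2) guarantees that the peeling procedure terminates. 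Granting this coherence, the induction closes and produces, for every pair $U,V$, a unique least common right-multiple, which is the asserted right LCM condition.
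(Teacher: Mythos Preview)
Your opening is correct: the fundamental element $\Delta$ guarantees that any $U,V$ admit a common right multiple $W$ (some power $\Delta^N$), so $\mathrm{mcm}_r(\{U,V\})\neq\varnothing$. The gap is in the second half. You try to induct on $\nu(U)+\nu(V)$, correctly observe that peeling a letter does not decrease this quantity, and then attempt to rescue the argument by invoking a ``cube/coherence condition'' supplied by ``the reduction lemma''. But Proposition~2.3 has no such hypothesis: the reduction lemma in this paper is a tool proved for the \emph{specific} monoids $G^+_{\mathrm{B_{ii}}}$ and $G^+_{m,n}$ (Lemmas~3.5 and~4.1), not a general assumption available here. Likewise, the finiteness of the divisors of $\Delta$ does not bound your peeling process, since the intermediate complements you produce have no reason to remain divisors of $\Delta$. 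So as written the argument does not close.

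The fix---and this is what the paper's terse sentence ``we solve the equation uniquely, if the solution exists'' is pointing at---is to change the induction variable: fix a common right multiple $W$ of $U$ and $V$ and induct on $\nu(W)$, proving that any two left divisors of $W$ admit a right LCM which again left-divides $W$. Write $U\deeq sU'$, $V\deeq tV'$. If $s\deeq t$, cancel and apply the induction hypothesis to $U',V'$ inside the strictly shorter $s^{-1}W$. If $s\neq t$, both divide $W$, so $\mathrm{lcm}_r(s,t)=s\alpha=t\beta$ left-divides $W$; write $W\deeq s\alpha W''$. Now $U'$ and $\alpha$ are left divisors of $\alpha W''$ (strictly shorter than $W$), so by induction they have an LCM; similarly for $V'$ and $\beta$ inside $\beta W''$; one more application of the induction hypothesis inside $W''$ assembles these into $\mathrm{lcm}_r(U,V)$. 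Every recursive call is on a strict left quotient of $W$, so atomicity forces termination, and no coherence hypothesis is needed: the existence of the common multiple $W$ is doing all the work.
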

\begin{proof} 
Let $\Delta \in \mathcal{F}(G^+)$ be a fundamental element. Due to the property of fundamental element, for any $\gamma \in G^+$, there exists a sufficiently large integer $l$ such that $\gamma$ divides $\Delta^{l}$ from the left and
 the right. Therefore, we say that, for any two elements $U, V$ in $G^+$, $\mathrm{mcm}_{r}(\{ U, V \}) \not=\emptyset$. We consider the following equation
\[
U X \deeq V Y.
\]
Since the monoid $G^+$ is cancellative and, for any two letters $\alpha, \beta$ in $\widetilde{L}$, they admit the right least common multiple, 
we solve the equation uniquely, if the solution exists. The existence of the solution is guaranteed since, for any two elements $U, V$ in $G^+$, $\mathrm{mcm}_{r}(\{ U, V \}) \not=\emptyset$. We conclude that the monoid $G^+$ satisfies the right LCM condition. In the same way, we also conclude that the monoid $G^+$ satisfies the left LCM condition.
\end{proof}
\ \ \\
\begin{proposition}
{\it  Let $G^+$ be a positively presented atomic monoid ${\langle L \mid R\,\rangle}_{mo}$ that carries a fundamental element $\Delta$.\\
Then, the monoid $G^+$ satisfies the left (resp.~right) LCM condition if and only if the monoid $G^+$ satisfies the right (resp.~left)  GCD condition.}
\end{proposition}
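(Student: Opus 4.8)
The plan is to exploit the Garside element $\Delta$ so as to reduce everything to a finite computation. By Proposition 2.2 a fundamental element is a Garside element, and atomicity already forces every $\beta \in G^+$ to have only finitely many left-divisors: if $\beta = ab$ then $\nu(\beta)\ge \nu(a)$, and any expression $a = g_{i_1}\cdots g_{i_k}$ has $k\le \nu(a)\le \nu(\beta)$, so each left-divisor is represented by a word of bounded length over the finite alphabet $L$. Moreover, as recorded in the proof of Proposition 2.3, the fundamental element property yields, for every $\gamma\in G^+$, an exponent $l$ with $\gamma\,|_l\,\Delta^{l}$ and $\gamma\,|_r\,\Delta^{l}$. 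Finally, the four conditions form a left--right dual pair: passing to the opposite monoid $(G^+)^{\mathrm{op}}$ (reverse every relation) interchanges $|_l$ with $|_r$ and carries $\Delta$ to a fundamental element (with permutation $\sigma^{-1}$), hence swaps the left and right LCM (resp.~GCD) conditions. So it suffices to prove the single equivalence \emph{the right LCM condition holds iff the left GCD condition holds}; applying this to $(G^+)^{\mathrm{op}}$ then yields the version with ``left'' and ``right'' exchanged.

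First I would prove that the right LCM condition implies the left GCD condition. The one technical ingredient is that, under the right LCM condition, $\mathrm{lcm}_r(U,V)$ left-divides \emph{every} common right-multiple of $U,V$: given such a $C$, the set of common right-multiples of $U,V$ that left-divide $C$ is finite (it lies among the left-divisors of $C$) and nonempty (it contains $C$), and a $|_l$-minimal member of it is automatically minimal in $\mathrm{cm}_r(\{U,V\})$, hence equals the unique element of $\mathrm{mcm}_r(\{U,V\})$; therefore that element left-divides $C$. Granting this, fix $U,V$ and let $d_1,\dots,d_k$ enumerate the finitely many, nonempty common left-divisors in $\mathrm{cd}_l(\{U,V\})$. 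Put $J:=\mathrm{lcm}_r(d_1,\dots,d_k)$, formed by iterating the pairwise right LCM. Since every $d_i\,|_l\,U$, the element $U$ is a common right-multiple of the $d_i$, so by the fact above $J\,|_l\,U$; likewise $J\,|_l\,V$. Thus $J$ is itself a common left-divisor of $U,V$ that is left-divisible by every $d_i$, i.e.\ the greatest common left-divisor, so $\mathrm{mcd}_l(\{U,V\})=\{J\}$.

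For the converse I would run the dual construction, now cutting everything down to a power of $\Delta$. Assume the left GCD condition and fix $U,V$; choose $\Omega:=\Delta^{l}$ with $U,V\,|_l\,\Omega$, so $\Omega$ is a common right-multiple. The common right-multiples of $U,V$ that left-divide $\Omega$ form a finite nonempty set; let $J$ be its left GCD (iterated pairwise). Because every member is left-divisible by both $U$ and $V$, the GCD $J$ absorbs these common left-divisors, giving $U\,|_l\,J$ and $V\,|_l\,J$, while $J\,|_l\,\Omega$. Hence $J$ is a common right-multiple of $U,V$ with $J\,|_l\,\Omega$. It remains to see that $J$ left-divides an arbitrary common right-multiple $C$: set $g:=\mathrm{gcd}_l(\Omega,C)$. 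From $U\,|_l\,\Omega$ and $U\,|_l\,C$ we get $U\,|_l\,g$, and similarly $V\,|_l\,g$, so $g$ is again a common right-multiple of $U,V$ with $g\,|_l\,\Omega$; therefore $J\,|_l\,g$, and since $g\,|_l\,C$ we conclude $J\,|_l\,C$. Thus $J$ is the least common right-multiple and $\mathrm{mcm}_r(\{U,V\})=\{J\}$.

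The substantive points, rather than any single hard step, are: (i) upgrading ``minimal common multiple'' to ``least common multiple'', which is precisely the finiteness argument above and is what makes the divisibility order on the divisors of $\Omega$ behave like a finite lattice (a finite meet-semilattice with a top, or join-semilattice with a bottom, is automatically a lattice); and (ii) the bookkeeping that keeps every auxiliary element inside the finite set of left-divisors of a fixed $\Delta^{l}$, which is where atomicity and the Garside property of $\Delta$ enter. I would also check carefully that the opposite-monoid reduction is legitimate --- that reversing the relations preserves the positive, atomic presentation and carries $\Delta$ to a fundamental element --- since the whole ``resp.'' form of the statement rests on it. I expect step (i) to be the main obstacle to state cleanly. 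Note that cancellativity is never used: only atomicity and the existence of a fundamental element are required.
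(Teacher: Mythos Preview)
Your argument is correct. For the direction ``LCM $\Rightarrow$ GCD'' you follow essentially the paper's line: it defines the right GCD of $\alpha,\beta$ as $\mathrm{lcm}_l(\mathrm{cd}_r(\{\alpha,\beta\}))$, which is exactly your $J=\mathrm{lcm}_r(d_1,\dots,d_k)$ with sides swapped. Where you genuinely diverge is in the converse. The paper argues by contradiction in two lines: if the left LCM condition failed, pick two distinct elements $C_1,C_2\in\mathrm{mcm}_l(\{\alpha,\beta\})$ (nonempty thanks to $\Delta$), observe that $\alpha,\beta$ both right-divide $\mathrm{gcd}_r(\{C_1,C_2\})$, so this GCD is a common left-multiple right-dividing each $C_i$, forcing $C_1=C_2$. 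Your route is constructive: you build the right LCM as the left GCD of the finite set of common right-multiples lying below $\Omega=\Delta^l$, and then use a second GCD $\mathrm{gcd}_l(\Omega,C)$ to show it divides every common multiple $C$. Both are sound; the paper's contradiction is shorter, while your version has the advantage of actually exhibiting the LCM and of making explicit the ``minimal $\Rightarrow$ least'' upgrade (your point~(i)), which the paper sweeps into ``we easily show''. Your opposite-monoid reduction for the ``resp.''\ half is fine and is implicit in the paper.
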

\begin{proof} 
Assume that the monoid $G^+$ satisfies the left LCM condition. We easily show that, for any two elements $\alpha, \beta$ in $G^+$, the element $\mathrm{lcm}_{l}(\mathrm{cd}_{r}(\{ \alpha, \beta \}))$ is the right greatest common divisor for the set $\{ \alpha, \beta \}$.\\
Next we assume the monoid $G^+$ satisfies the right GCD condition. We suppose that the monoid $G^+$ does not satisfy the left LCM condition. Then, there exist two elements $\alpha, \beta$ in $G^+$ such that the set $\mathrm{mcm}_{l}(\{ \alpha, \beta \})$ does not consist of a single element. Thanks to the existence of a fundamental element $\Delta$, we say that $\mathrm{mcm}_{l}(\{ \alpha, \beta \}) \not=\emptyset$. Therefore, we can take two different elements  $C_1, C_2$ from the set $\mathrm{mcm}_{l}(\{ \alpha, \beta \})$. We say that both $\alpha$ and $\beta$ are left divisors of $\mathrm{gcd}_{r}(\{ C_1, C_2 \})$. Hence, we say that
\[
 C_1 = \mathrm{gcd}_{r}(\{ C_1, C_2 \}) = C_2 .
\]
We have a contradiction.
\end{proof}
\ \ \\
\begin{proposition}
{\it  Let $G^+$ be a positively presented atomic monoid ${\langle L \mid R\,\rangle}_{mo}$ that satisfies the cancellation condition. We suppose that the monoid $G^+$ satisfies the LCM condition and $\mathcal{F}(G^+) \not=\emptyset$ (i.e. the monoid $G^+$ is a Garside monoid).\\
Then, the idealistic subsemigroup $\mathcal{F}(G^+) (\subseteq \mathcal{QZ}(G^+))$ is singly generated.}
\end{proposition}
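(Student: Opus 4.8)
The plan is as follows. Under the stated hypotheses $G^+$ is a Garside monoid, so by Proposition 2.4 it also satisfies the GCD condition, and by Proposition 2.2 the fundamental elements are exactly the Garside elements. To say that $\mathcal{F}(G^+)$ is singly generated it suffices to produce a \emph{unique minimal} fundamental element $\widehat{\Delta}$ that divides every fundamental element from both the left and the right; the semigroup statement will then follow from the ideal relation $\mathcal{F}(G^+)\mathcal{QZ}(G^+)=\mathcal{QZ}(G^+)\mathcal{F}(G^+)=\mathcal{F}(G^+)$ together with Proposition 2.2 (2). The candidate is
\[
\widehat{\Delta}\ :=\ \mathrm{lcm}_{l}(\widetilde{L}),
\]
the left least common multiple of all the atoms, which exists because the LCM condition provides pairwise least common multiples and $\widetilde{L}$ is finite.

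First I would record the reformulation that an element of $G^+$ is a fundamental element if and only if it is quasi-central and is left-divisible by every atom $s\in\widetilde{L}$: if $\Delta$ is quasi-central with permutation $\sigma$ and $\Delta\deeq s\cdot\Delta_s$, then from $s\cdot\Delta\deeq\Delta\cdot\sigma(s)$ and cancellativity one gets $\Delta\deeq\Delta_s\cdot\sigma(s)$, which is exactly the defining relation of a fundamental element. By construction every atom left-divides $\widehat{\Delta}$, so the whole problem reduces to showing that $\widehat{\Delta}$ is quasi-central. \textbf{This is the main obstacle.} I would establish it using the Garside structure furnished by a given fundamental element $\Delta_0\in\mathcal{F}(G^+)$: its left-divisors form a finite lattice $D$ (closed under $\mathrm{gcd}_{l}$ and $\mathrm{lcm}_{l}$, since any two left-divisors of $\Delta_0$ admit an $\mathrm{lcm}_{l}$ dividing $\Delta_0$), which contains $\widetilde{L}$ and hence $\widehat{\Delta}$. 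The permutation $\sigma_{0}$ of $\Delta_0$ extends to an automorphism of $G^+$ permuting $\widetilde{L}$, and since least common multiples are preserved by automorphisms it fixes $\widehat{\Delta}$. Feeding this invariance into the left- and right-complement maps on $D$ relative to $\Delta_0$, one shows that the set of left-divisors of $\widehat{\Delta}$ coincides with the set of its right-divisors and reads off the permutation $\sigma_{\widehat{\Delta}}$; equivalently, one may invoke the classical fact that in a Garside monoid the left-lcm of the atoms is the (balanced) minimal Garside element.

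Granting quasi-centrality, the remaining steps are routine. For minimality, if a fundamental element $\Delta'$ satisfies $\Delta'\,|_l\,\widehat{\Delta}$ then, being fundamental, $\Delta'$ is a common left-multiple of all atoms, whence $\widehat{\Delta}\,|_l\,\Delta'$; antisymmetry of left-divisibility in an atomic monoid (from $\nu(\alpha\beta)\ge\nu(\alpha)+\nu(\beta)$) gives $\Delta'\deeq\widehat{\Delta}$, and the right-hand case is symmetric. The same computation shows that any minimal fundamental element is left-divisible by $\widehat{\Delta}$ and therefore equals it, so $\widehat{\Delta}$ is unique. Finally every fundamental element $\Delta$ is a common left-multiple of all atoms, so $\widehat{\Delta}\,|_l\,\Delta$; and since $\widehat{\Delta}$ is quasi-central we have $\widehat{\Delta}\cdot x\deeq\sigma_{\widehat{\Delta}}^{-1}(x)\cdot\widehat{\Delta}$ for every $x$, which turns a factorization $\Delta\deeq\widehat{\Delta}\cdot d$ into $\Delta\deeq\sigma_{\widehat{\Delta}}^{-1}(d)\cdot\widehat{\Delta}$, giving $\widehat{\Delta}\,|_r\,\Delta$ as well.

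It remains to deduce single generation. On one hand $\widehat{\Delta}\,\mathcal{QZ}(G^+)\subseteq\mathcal{F}(G^+)\mathcal{QZ}(G^+)=\mathcal{F}(G^+)$. On the other hand, for any $\Delta\in\mathcal{F}(G^+)$ we have just seen $\widehat{\Delta}\,|_l\,\Delta$, say $\Delta\deeq\widehat{\Delta}\cdot d$; as $\widehat{\Delta}$ and $\Delta$ are both quasi-central, Proposition 2.2 (2) gives $d\in\mathcal{QZ}(G^+)$, so $\Delta\in\widehat{\Delta}\,\mathcal{QZ}(G^+)$. Hence $\mathcal{F}(G^+)=\widehat{\Delta}\,\mathcal{QZ}(G^+)$, and symmetrically $\mathcal{F}(G^+)=\mathcal{QZ}(G^+)\,\widehat{\Delta}$, i.e. the idealistic subsemigroup $\mathcal{F}(G^+)\subseteq\mathcal{QZ}(G^+)$ is singly generated by $\widehat{\Delta}$.
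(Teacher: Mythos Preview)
Your argument reaches the right conclusion, but by a genuinely different route from the paper, and with one step that is not fully justified.

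The paper does \emph{not} construct $\widehat{\Delta}$ as $\mathrm{lcm}(\widetilde{L})$. Instead it argues purely on the set $\mathcal{F}(G^+)$: take any $\Delta_1,\Delta_2\in\mathcal{F}(G^+)$; by Proposition~2.4 the GCD condition holds, and since each $\Delta_i$ is a Garside element (Proposition~2.2) one checks $\mathrm{gcd}_l(\{\Delta_1,\Delta_2\})=\mathrm{gcd}_r(\{\Delta_1,\Delta_2\})=:\delta$ and that $\delta$ is again a Garside element, hence fundamental. Thus $\mathcal{F}(G^+)$ is closed under two-sided gcd, and atomicity immediately yields a unique minimal element $\widehat{\Delta}$ dividing every fundamental element from both sides. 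This is short and entirely self-contained with the tools already on the table.

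Your approach instead fixes the candidate $\widehat{\Delta}=\mathrm{lcm}(\widetilde{L})$ up front and then must prove it is quasi-central. You correctly flag this as ``the main obstacle,'' but the justification you give is not a proof: knowing that the automorphism $\sigma_{\Delta_0}$ fixes $\widehat{\Delta}$ does not by itself produce a permutation $\sigma_{\widehat{\Delta}}$ with $s\cdot\widehat{\Delta}\deeq\widehat{\Delta}\cdot\sigma_{\widehat{\Delta}}(s)$, and the sentence about ``feeding this invariance into the left- and right-complement maps'' is a gesture rather than an argument. Falling back on ``the classical fact that in a Garside monoid the left-lcm of the atoms is the minimal Garside element'' is legitimate as a citation, but it imports exactly the statement you are trying to prove. (There is also a minor slip: in the paper's conventions, the element left-divisible by every atom is $\mathrm{lcm}_r(\widetilde{L})$, not $\mathrm{lcm}_l$.)

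In short: the paper's gcd argument is cleaner here because it never needs to verify balancedness of a newly constructed element; your route is correct in outline but leaves the hard step to an external reference.
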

\begin{proof} 
We take arbitrary two elements $\Delta_1$, $\Delta_2$ in $\mathcal{F}(G^+) $. Due to the Proposition 2.4, we say that, for two elements $\Delta_1$, $\Delta_2$, there exists the right (resp.~left) greatest common divisor. Since both $\Delta_1$ and $\Delta_2$ are Garside elements, we easily show that
\[
\mathrm{gcd}_{r}(\{ \Delta_1, \Delta_2  \}) = \mathrm{gcd}_{l}(\{ \Delta_1, \Delta_2 \}).
\]
We put $\delta := \mathrm{gcd}_{r}(\{ \Delta_1, \Delta_2  \}) = \mathrm{gcd}_{l}(\{ \Delta_1, \Delta_2 \})$. We easily show that the element $\delta$ is a Garside element. We also say that the element $\delta$ is a fundamental element. Hence, there exists a unique element $\widehat{\Delta}$ such that $\mathcal{F}(G^+) ( \subseteq \mathcal{QZ}(G^+))$ is singly generated by $\widehat{\Delta}$.
\end{proof}
Thanks to the Proposition 2.5, in a Garside monoid we can choose the unique minimal fundamental element $\widehat{\Delta}$.
\begin{lemma}
{\it  Let $G^+$ be a positively presented atomic monoid ${\langle L \mid R\,\rangle}_{mo}$ that satisfies the cancellation condition and carries a fundamental element. Let $\delta$ be an element that satisfies the condition that there exists a mapping $\sigma: \widetilde{L} \to G^+$
\[
\forall s \in \widetilde{L},\,\,  s \cdot \delta \deeq \delta \cdot \sigma(s).
\] 
Then, the element $\delta$ is a quasi-central element.}
\end{lemma}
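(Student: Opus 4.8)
The plan is to upgrade the given map $\sigma$ to a genuine permutation of $\widetilde{L}$. First I would note that, since $G^+$ is atomic, every element factors as a product of atoms (induct on $\nu$), and the image of $\widetilde{L}$ is exactly the set of atoms. Hence the relation $s\cdot\delta\deeq\delta\cdot\sigma(s)$ propagates: for any $w\deeq s_1\cdots s_k$ with $s_i\in\widetilde{L}$ a telescoping computation gives $w\cdot\delta\deeq\delta\cdot\sigma(s_1)\cdots\sigma(s_k)$, and by left-cancellation the product $\sigma(s_1)\cdots\sigma(s_k)$ depends only on $w$, not on the chosen factorization. This defines a monoid endomorphism $\Phi\colon G^+\to G^+$ with $\Phi|_{\widetilde{L}}=\sigma$ and $w\cdot\delta\deeq\delta\cdot\Phi(w)$ for all $w$; it is injective by right-cancellation. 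With this reformulation, $\delta$ is quasi-central precisely when $\Phi$ is an automorphism: an automorphism carries indecomposables to indecomposables, so it restricts to a permutation of the atoms, i.e. of $\widetilde{L}$; conversely, if $\sigma$ permutes $\widetilde{L}$ then $\Phi$ maps a generating set onto itself and is therefore surjective. So the whole problem reduces to showing that $\Phi$ sends atoms to atoms, after which surjectivity follows from injectivity and the finiteness of $\widetilde{L}$.

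To control $\Phi$ I would bring in the fundamental element $\Delta$. Writing $\sigma_\Delta$ for its permutation, choose $N$ a multiple of $\mathrm{ord}(\sigma_\Delta)$ large enough that $\delta$ divides $\Delta^N$ from the left (possible since every element divides a sufficiently high power of a fundamental element, cf. the proof of Proposition 2.3), say $\Delta^N\deeq\delta X$. Then $\Delta^N$ lies in the center $\mathcal{Z}(G^+)$, so $\Delta^N\delta\deeq\delta\Delta^N$; comparing with the defining relation $\Delta^N\delta\deeq\delta\,\Phi(\Delta^N)$ and cancelling $\delta$ on the left yields the fixed-point relation $\Phi(\Delta^N)\deeq\Delta^N$.

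The decisive step is a length count. Let $\|x\|$ denote the largest number of atoms in a factorization of $x$ (finite, since $\|x\|\le\nu(x)$ by super-additivity of $\nu$), so that atoms are exactly the non-identity elements with $\|x\|=1$. Each $\sigma(s)\neq\varepsilon$, for otherwise $s\delta\deeq\delta$ would force $s\deeq\varepsilon$; hence applying $\Phi$ to a maximal factorization gives $\|\Phi(x)\|\ge\|x\|$ for every $x$. Now take a maximal factorization $\Delta^N\deeq b_1\cdots b_M$ with $M=\|\Delta^N\|$; applying $\Phi$ and using $\Phi(\Delta^N)\deeq\Delta^N$ gives $\Delta^N\deeq\sigma(b_1)\cdots\sigma(b_M)$, whence $M=\|\Delta^N\|\ge\sum_i\|\sigma(b_i)\|\ge M$. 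Equality throughout forces $\|\sigma(b_i)\|=1$, so every $\sigma(b_i)$ is an atom. Since each atom left-divides $\Delta$ and hence occurs in a factorization of $\Delta^N$, this identifies $\sigma(s)$ as an atom for every $s\in\widetilde{L}$; injectivity and $\#\widetilde{L}<\infty$ then make $\sigma$ a permutation of $\widetilde{L}$, so $\Phi$ is an automorphism and $\delta$ is quasi-central.

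The step I expect to be the genuine obstacle is the passage from ``$\sigma$ is an atom on the $b_i$'' to ``$\sigma$ is an atom on every generator'': I must know that every atom appears in some \emph{maximal}-length atomic factorization of $\Delta^N$, equivalently that deleting one atom on the left drops $\|\cdot\|$ by exactly one. For a homogeneous presentation this is automatic, since $\nu$ is then additive and every atomic factorization of a fixed element has the same length $\nu(\Delta^N)$; in that case one may in fact bypass $\Delta$ entirely and read off $\nu(\sigma(s))=\nu(s)=1$ directly, which already settles the monoids $G^+_{\mathrm{B_{ii}}}$ and $G^{+}_{m,n}$. In the general atomic setting one must instead exploit that the left-divisors of the Garside element $\Delta$ coincide with its right-divisors (Remark 2.1) and that every atom is such a divisor, so that no atom is ``lost'' in passing to a maximal factorization; this is the point where the full fundamental/Garside structure, rather than mere atomicity, is what the hypothesis is really buying.
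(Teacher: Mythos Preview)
Your reformulation via the monoid endomorphism $\Phi$ satisfying $w\cdot\delta\deeq\delta\cdot\Phi(w)$ and the fixed-point relation $\Phi(\Delta^N)\deeq\Delta^N$ is elegant and genuinely different from the paper's route. The paper never constructs $\Phi$; instead it writes a suitably large fundamental element as $\Delta\deeq\delta\cdot D$ with every atom dividing $D$ on both sides, extracts by cancellation the companion relation $\sigma(s)\cdot D\deeq D\cdot\sigma_\Delta(s)$ linking the unknown map $\sigma$ to the known permutation $\sigma_\Delta$, and then argues by a case analysis on the $\sigma$-orbits of atoms in $L(\delta)$ and in $L(D)$, contradicting bounded atomic expression lengths whenever such an orbit escapes $\widetilde{L}$. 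Your homogeneous shortcut $\nu(\sigma(s))=\nu(s)=1$ is correct and already settles every monoid actually used in the paper.

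In the general atomic case, however, the gap you flag is real and your closing paragraph does not repair it. Your maximal-length count establishes only that $\sigma(b)$ is an atom for those $b$ occurring in \emph{some maximal-length} atomic factorization of $\Delta^N$; to reach every $s\in\widetilde{L}$ you need that each atom occurs in such a factorization, i.e.\ that $\|\Delta^N\|=1+\|\Delta_s\cdot\Delta^{N-1}\|$ for every $s$. The facts you invoke --- left- and right-divisors of $\Delta$ coincide, every atom divides $\Delta$ --- produce an atomic factorization of $\Delta^N$ beginning with any prescribed $s$, but say nothing about its being of \emph{maximal} length, and in the absence of the GCD condition (which can fail here, by Proposition~2.4) there is no evident mechanism to promote it to one. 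The paper's argument sidesteps this entirely: its iterated rewriting $\delta\deeq\sigma^N(\alpha_1)\cdots\sigma^N(\alpha_l)$, together with the orbit dichotomy $L_{\mathrm{reg}}(\delta)/L_{\mathrm{irreg}}(\delta)$ and the companion relation for $D$, runs directly on the fixed elements $\delta$ and $D$ and never needs to identify which factorizations are longest.
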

\begin{proof} 
Due to the property of fundamental elements, there exists a fundamental element $\Delta$ such that 
\[
\Delta \deeq \delta \cdot D.
\]
In addition to this property, we can assume that, for any letter $s$ in the set $\widetilde{L}$, the letter $s$ devides $D$ from the left and the right. Since the monoid $G^+$ is a cancellative monoid, there exists a unique permutation $\sigma_\Delta$ for the fundamental element $\Delta$. From the assumption, we have an equation
\[
\forall s \in \widetilde{L},\,\, s\cdot \Delta \deeq \delta \cdot \sigma(s) \cdot D \deeq \delta \cdot D \cdot  \sigma_{\Delta}(s).
\]
Due to the cancellativity, we have an equation
\begin{equation}
\forall s \in \widetilde{L},\,\, \sigma(s) \cdot D \deeq D \cdot  \sigma_{\Delta}(s).
\end{equation}
We put
\[
L(\delta) := \{ s \in \widetilde{L} \mid s \,|_l \,\delta  \}.
\]
By using the equation (2.1), we also have an equation
\begin{equation}
\forall s \in L(\delta),\,\, s\cdot \delta_s \deeq \delta_s \cdot \sigma(s).
\end{equation}
For each $s$ in $L(\delta)$, the quotient can be uniquely determined in the monoid $G^+$. Here, we write it by $\delta_s$. We put
\[
L_{reg}(\delta) := \{ s \in L(\delta) \mid \forall k \in \Z_{\ge0},\,\, \sigma^{k}(s) \in \widetilde{L} \}, L_{irreg}(\delta) := L(\delta)\setminus L_{reg}(\delta).
\]
Since the set $\widetilde{L}$ is a finite set, the restriction $\sigma|_{L_{reg}(\delta)}$ is a permutation.
\\
{\bf Claim.} $L_{reg}(\delta) \not=\emptyset$.
\begin{proof} We assume that $L_{reg}(\delta) =\emptyset$. For any element $s$ in $L(\delta)$, there exists a positive integer $k(s)$ such that 
\[
\sigma^{k(s)}(s) \in G^{+}\setminus \widetilde{L}. 
\]
We write the element $\delta$ by a product of atoms
\[
\delta \deeq \alpha_1 \cdot \alpha_2 \cdots \alpha_l .
\]
From the equation (2.2), we have an equation
\[
\alpha_1 \cdot \alpha_2 \cdots \alpha_l \deeq \sigma(\alpha_1) \cdot \sigma(\alpha_2) \cdots \sigma(\alpha_l)
\]
\[
\cdots \deeq \sigma^{N}(\alpha_1) \cdot \sigma^{N}(\alpha_2) \cdots \sigma^{N}(\alpha_l).\,\,\,\,\,\,\,\,\,\,
\]
Hence, expressions of the element $\delta$ by product of atoms do not have bounded lengths. Since the monoid $G^+$ is an atomic monoid, we have a contradiction. 
\end{proof}
We assume that $L_{irreg}(\delta) \not=\emptyset$. We put
\[
L'_{irreg}(\delta) := \{ s \in L_{irreg}(\delta) \mid  \sigma(s) \in G^{+}\setminus \widetilde{L} \}.
\]
We consider the following two cases.\par
Case 1: $\exists s_0 \in L'_{irreg}(\delta)$ s.t. the expressions of the element $\sigma(s_0)$ consist of atoms in $L_{reg}(\delta)$.\\
We write the element $\sigma(s_0)$ by a product of atoms in $L_{reg}(\delta)$
\[
\sigma(s_0) \deeq \beta_1 \cdot \beta_2 \cdots \beta_k.
\]
Then, there exist atoms $\beta'_{i}$ in $L_{reg}(\delta)$ $(i= 1, \ldots, k)$ such that $\sigma(\beta'_{i}) \deeq \beta_i$$(i= 1, \ldots, k)$. Therefore, we have an equation
\begin{equation}
\delta \deeq \beta'_{k} \cdot \delta_{\beta'_{k}} \deeq \delta_{\beta'_{k}} \cdot \beta_k \deeq \delta_{s_{0}}\cdot \beta_1 \cdot \beta_2 \cdots \beta_k.
\end{equation}
Due to the cancellativity, we have an equality
\[
\delta_{\beta'_{k}} \deeq \delta_{s_{0}}\cdot \beta_1 \cdot \beta_2 \cdots \beta_{k-1}.
\]
By substituting $\delta_{\beta'_{k}}$ by $\delta_{s_{0}}\cdot \beta_1 \cdot \beta_2 \cdots \beta_{k-1}$ in the equation (2.3), we have
\[
s_0 \cdot \delta_{s_0} \deeq \delta \deeq \beta'_{k} \cdot \delta_{s_{0}}\cdot \beta_1 \cdot \beta_2 \cdots \beta_{k-1} \deeq \beta'_1 \cdot \beta'_2 \cdots \beta'_{k} \cdot \delta_{s_{0}}.
\]
Then, we have an equality $s_0 \deeq \beta'_1 \cdot \beta'_2 \cdots \beta'_{k}$. A contradiction.\par
Case 2: $\forall s \in L'_{irreg}(\delta)$, the expressions of the element $\sigma(s)$ by product of atoms contain an atom in $L_{irreg}(\delta)$.\\
We consider an equation again
\[
\delta \deeq \alpha_1 \cdot \alpha_2 \cdots \alpha_l \deeq \sigma(\alpha_1) \cdot \sigma(\alpha_2) \cdots \sigma(\alpha_l)
\]
\[
\cdots \deeq \sigma^{N}(\alpha_1) \cdot \sigma^{N}(\alpha_2) \cdots \sigma^{N}(\alpha_l).\,\,\,\,\,\,\,\,\,\,\,\,\,\,\,\,\,\,\,\,
\]
Since the monoid $G^+$ is an atomic monoid, we have a contradiction. \\

Therefore, we conclude that $L(\delta) = L_{reg}(\delta)$. Next, we put
\[
L(D) := \{ s \in \widetilde{L} \mid s \,|_r \,D  \}, L_{reg}(D) := \{ s \in L(D) \mid \forall k \in \Z_{\ge0},\,\, \varphi^{k}(s) \in \widetilde{L} \}
\]
, where $\varphi(s) := \sigma(\sigma^{-1}_{\Delta}(s))$.
In the same way, we conclude that $L(D) = L_{reg}(D)$. From the assumption, we say that  $L(D) = \widetilde{L}$. Hence, the element $D$ is a fundamental element. We conclude that $\delta$ is a quasi-central element.
\end{proof}
\begin{lemma}
{\it We suppose that a positively presented monoid $G^+ = {\langle L \mid R\,\rangle}_{mo}$ is a Garside monoid. Let $\Delta_1$ and $\Delta_2$ be quasi-central elements in $G^+$. \\
Then, the element $\mathrm{gcd}_{l}(\{ \Delta_1, \Delta_2  \}) \deeq \mathrm{gcd}_{r}(\{ \Delta_1, \Delta_2  \})$ is a quasi-central element.}
\end{lemma}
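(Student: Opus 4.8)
The plan is to prove the two assertions in turn: first that the left and right greatest common divisors agree, and then that this common divisor $\delta$ satisfies the commutation relation characterizing quasi-central elements, so that Lemma 2.6 applies.

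First I would dispose of the equality $\mathrm{gcd}_{l}(\{\Delta_1,\Delta_2\}) \deeq \mathrm{gcd}_{r}(\{\Delta_1,\Delta_2\})$. Since $G^+$ is a Garside monoid it satisfies the LCM condition and carries a fundamental element, so by Proposition 2.4 it satisfies the GCD condition; hence $g_l := \mathrm{gcd}_{l}(\{\Delta_1,\Delta_2\})$ and $g_r := \mathrm{gcd}_{r}(\{\Delta_1,\Delta_2\})$ both exist. (If either $\Delta_i$ is $\varepsilon$ the statement is trivial, so assume both are non-trivial.) By the first Remark after Definition 2.1, each non-trivial quasi-central $\Delta_i$ has coinciding sets of left- and right-divisors; therefore the set of common left-divisors of $\{\Delta_1,\Delta_2\}$ equals the set of common right-divisors. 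Since $g_r$ belongs to this common set it is in particular a common left-divisor, so $g_r \,|_l\, g_l$; symmetrically $g_l \,|_r\, g_r$. Writing $g_l \deeq g_r a$ and $g_r \deeq b\, g_l$ gives $g_r \deeq b\,g_r a$, and atomicity forces $a \deeq b \deeq \varepsilon$, whence $g_l \deeq g_r =: \delta$.

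Next I would prepare the factorizations $\Delta_1 \deeq \delta e_1$ and $\Delta_2 \deeq \delta e_2$, and observe that $e_1$ and $e_2$ are left-coprime, i.e. $\mathrm{gcd}_{l}(\{e_1,e_2\}) \deeq \varepsilon$: if $g$ were their left gcd then $\delta g$ would be a common left-divisor of $\Delta_1,\Delta_2$, forcing $\delta g \,|_l\, \delta$, which by cancellation and atomicity gives $g \deeq \varepsilon$. The key step is then, for a fixed atom $s \in \widetilde{L}$, to deduce $\delta \,|_l\, s\delta$. From $s\Delta_i \deeq \Delta_i \sigma_{\Delta_i}(s)$ and $\Delta_i \deeq \delta e_i$ I obtain
\[
(s\delta)\,e_i \deeq \delta\,\bigl(e_i\,\sigma_{\Delta_i}(s)\bigr),
\]
so $\delta$ left-divides both $(s\delta)e_1$ and $(s\delta)e_2$. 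Because left multiplication commutes with $\mathrm{gcd}_{l}$ in a cancellative monoid admitting left gcds — if $m$ is the left gcd of $ua$ and $ub$, then $u$ is a common left-divisor, hence $u\,|_l\,m$, and cancelling $u$ reduces the computation to $a,b$ — I get
\[
\mathrm{gcd}_{l}(\{(s\delta)e_1,(s\delta)e_2\}) \deeq (s\delta)\,\mathrm{gcd}_{l}(\{e_1,e_2\}) \deeq s\delta.
\]
Since $\delta$ left-divides both arguments, it left-divides their left gcd, i.e. $\delta \,|_l\, s\delta$. Thus for each $s \in \widetilde{L}$ there is $\sigma(s) \in G^+$ with $s\delta \deeq \delta\,\sigma(s)$, and Lemma 2.6 then yields that $\delta$ is quasi-central.

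The main obstacle is precisely this last step. Since neither $\Delta_1$ nor $\Delta_2$ need be fundamental, $\delta$ cannot be treated as a divisor of a single quasi-central element for which conjugation visibly preserves $G^+$; one must use both factorizations $\Delta_i \deeq \delta e_i$ simultaneously. The device that converts the two one-sided divisibilities $\delta \,|_l\, (s\delta)e_i$ into the single relation $\delta \,|_l\, s\delta$ is the left-coprimality of $e_1,e_2$ together with the commutation of left multiplication with $\mathrm{gcd}_{l}$; verifying that commutation carefully (it rests only on cancellativity and the existence of left gcds) is the one technical point I would check in full.
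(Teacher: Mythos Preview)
Your proof is correct. The first part (equality of the left and right gcds) is handled essentially as in the paper, via Remark~1 and atomicity. For the second part the paper takes a closely related but dual route: rather than computing $\gcd_l(\{(s\delta)e_1,(s\delta)e_2\})$ directly, it argues by contradiction using the LCM condition. Assuming some $s_0\in\widetilde L$ fails $\delta\,|_l\,s_0\delta$, the paper writes $\mathrm{lcm}_r(\{s_0\delta,\delta\})=\{s_0\delta\,d\}$ with $d\neq\varepsilon$; since each $s_0\delta\,X_i=\delta\,X_i\,\sigma_{\Delta_i}(s_0)$ is a common right-multiple of $s_0\delta$ and $\delta$, one gets $d\,|_l\,X_i$ for $i=1,2$, so $\delta d$ is a common left-divisor of $\Delta_1,\Delta_2$ strictly above $\delta$, contradicting $\delta=\gcd_l$. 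Your argument is the GCD-side version of this same idea: you reach $\delta\,|_l\,s\delta$ by showing $\gcd_l(\{(s\delta)e_1,(s\delta)e_2\})=s\delta$ via left-coprimality of $e_1,e_2$ and the compatibility of $\gcd_l$ with left multiplication. Both approaches rest on the same core observation (quasi-centrality of both $\Delta_i$ forces $\delta$ to absorb each atom $s$ on the left) and both finish by invoking Lemma~2.6; your route avoids the explicit detour through $\mathrm{lcm}_r$ and is slightly more direct, while the paper's route keeps the argument phrased in the LCM language used elsewhere.
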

\begin{proof} 
Since the monoid satisfies the LCM condition, the GCD condition is also satisfied. We put $\delta_{l} := \mathrm{gcd}_{l}(\{ \Delta_1, \Delta_2  \})$ and $\delta_{r} := \mathrm{gcd}_{r}(\{ \Delta_1, \Delta_2  \})$. Dut to the Remark 1, we easily show that $\delta_{l} \deeq \delta_{r}$. We write it by $\delta$. If $\delta$ is equal to $\Delta_1$ or $\Delta_2$, then $\delta$ is a quasi-central element. Therefore, we suppose that $\delta \not= \Delta_{1}, \Delta_2$. If the element $\delta$ satisfies the following condition
\[
\forall s \in \widetilde{L} ,\,\, \mathrm{lcm}_{r}(\{ s \cdot \delta, \delta \}) = \{ s \cdot \delta\},
\]
due to the Lemma 2.6, we say that there exists a map $\sigma : \widetilde{L} \to \widetilde{L}$ such that
\[
\forall s \in \widetilde{L} ,\,\, s \cdot \delta \deeq \delta \cdot \sigma(s).
\]
Since the monoid $G^+$ is a cancellative monoid, we say that the map $\sigma$ is a permutation of $\widetilde{L}$. Therefore, we say that $\delta$ is a quasi-central element. If the element $\delta$ does not satisfy the above condition, then we say that there exists a letter $s_0 \in \widetilde{L}$ such that there exists a unique element $d \not= \varepsilon$ such that 
\[
\mathrm{lcm}_{r}(\{ s_0 \cdot \delta, \delta \}) = \{ s_0 \cdot \delta d\}.
\]
The uniqueness of $d$ is guaranteed by the LCM condition. We define two elements $X_1, X_2$ by 
\[
\Delta_{1} \deeq \delta \cdot X_1, \Delta_{2} \deeq \delta \cdot X_{2}.
\]
Since the monoid $G^+$ is a cancellative monoid, we say that two elements $X_1$ and $X_2$ are determined uniquely. We easily show that $d |_l X_1$ and $d |_l X_2$. Hence, we say that $\delta \cdot d \in \mathrm{cd}_{l}(\{ \Delta_1, \Delta_2  \})$. We have a contradiction. Therefore, we conclude that the element $\delta$ is a quasi-central element.
\end{proof}
As a consequence of Proposition 2.7, we prove the following proposition.
\begin{proposition}
{\it  We suppose that a positively presented monoid $G^+ = {\langle L \mid R\,\rangle}_{mo}$ is a Garside monoid. Let $\widehat{\Delta}$ be the unique minimal fundamental element in $G^+$, and let $\Delta$ be an indecomposable quasi-central element in $G^+$. \\
Then, the element $\Delta$ divides $\widehat{\Delta}$ from the left and the right.}
\end{proposition}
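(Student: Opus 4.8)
The plan is to compare $\Delta$ with $\widehat{\Delta}$ through their greatest common divisor and then exploit the indecomposability of $\Delta$. Since $G^+$ is a Garside monoid, Proposition 2.4 guarantees that the GCD condition holds, so I may form
\[
\delta := \mathrm{gcd}_{l}(\{ \Delta, \widehat{\Delta} \}) \deeq \mathrm{gcd}_{r}(\{ \Delta, \widehat{\Delta} \}),
\]
where the coincidence of the left and right greatest common divisors, together with the fact that $\delta$ is itself quasi-central, is exactly the content of Proposition 2.7 applied to the two quasi-central elements $\Delta$ and $\widehat{\Delta}$. By construction $\delta$ divides $\Delta$ from the left, so I would write $\Delta \deeq \delta \cdot e$ for a unique $e \in G^+$, and Proposition 2.2 (2) then forces $e$ to be quasi-central as well.

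The key step, and the one I expect to be the main obstacle, is to show $\delta \neq \varepsilon$. Here I would argue as follows. Because $\Delta$ is indecomposable it is in particular non-trivial, so by atomicity $\Delta$ factors as a product of atoms and thus admits some atom $s \in \widetilde{L}$ with $s \,|_l\, \Delta$. On the other hand $\widehat{\Delta}$ is a fundamental element, so by the defining relation $\widehat{\Delta} \deeq s \cdot \widehat{\Delta}_s$ every atom of $\widetilde{L}$, and in particular this $s$, divides $\widehat{\Delta}$ from the left. Hence $s$ is a common left divisor of $\Delta$ and $\widehat{\Delta}$, whence $s \,|_l\, \delta$ and $\delta \neq \varepsilon$.

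Finally I would combine the two observations. The factorization $\Delta \deeq \delta \cdot e$ exhibits $\Delta$ as a product of the two quasi-central elements $\delta$ and $e$. Since $\Delta$ is indecomposable and $\delta \neq \varepsilon$, the remaining factor must satisfy $e \deeq \varepsilon$, so that $\Delta \deeq \delta$. Therefore $\Delta \deeq \mathrm{gcd}_{l}(\{ \Delta, \widehat{\Delta} \})$ divides $\widehat{\Delta}$ from the left, and by the equality of the two greatest common divisors $\Delta \deeq \mathrm{gcd}_{r}(\{ \Delta, \widehat{\Delta} \})$ divides $\widehat{\Delta}$ from the right, which is the desired conclusion. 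The only delicate point in this scheme is the production of the common left-dividing atom $s$, i.e. verifying $\delta \neq \varepsilon$; once that is in hand, the rest is a formal consequence of Propositions 2.2, 2.4 and 2.7.
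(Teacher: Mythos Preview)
Your argument is correct and is precisely the intended one: the paper states Proposition 2.8 only as ``a consequence of Proposition 2.7'' without further detail, and your proof spells out exactly that consequence by taking $\delta = \mathrm{gcd}(\{\Delta,\widehat{\Delta}\})$, invoking Lemma~2.7 to see that $\delta$ is quasi-central, and then using indecomposability. The step you flag as delicate, showing $\delta \neq \varepsilon$, is indeed the only point requiring a word of justification, and your argument via a common left-dividing atom is the right one.
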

\ \ \\
\begin{definition}
{\it Let $G^+$ be a positively presented atomic cancellative monoid ${\langle L \mid R\,\rangle}_{mo}$ that carries a fundamental element. \\
The monoid $G^+$ is called  \emph{tame} if for any indecomposable quasi-central element $\Delta_0$ in $G^+$ there exists a minimal fundamental element $\Delta$ such that the element $\Delta_0$ divides $\Delta$ from the left and the right. }
\end{definition}
\begin{remark}{\it In \cite{[I1]}, the author studied some positive homogeneously presented cancellative monoid. The author made a list of all the minimal fundamental elements and indecomposable quasi-central elements in the monoid. The monoid is not tame.}
\end{remark}
Lastly, we discuss the word and conjugacy problem in a positive homogeneously presented group.
\begin{definition}
{\it Let $G^+ = {\langle L \mid R\rangle}_{mo}$ be a positive homogeneously presented monoid.

1) 
For arbitrary two words $U$, $V$ in $L^*$, give an algorithm 
that decides whether $U \deeq V$ in $G^+$ 
 or not.
 
2) 
For arbitrary two words $U$, $V$ in $L^*$, give an algorithm 
that decides whether there exists an element $A$ in $G^+$ 
 such that $AU \deeq VA$ (then we write 
   $U$$\underset{mo}{\sim}$$V$) or not. \par 
The problems 1), 2) are called the
\emph{word  problem} and the \emph{conjugacy problem} 
in a monoid $G^+$, respectively.}
\end{definition}
\begin{remark}{\it If $\mathcal{F}(G^+)\not=\emptyset$, then the relation $\underset{mo}{\sim}$ is an equivalence relation.}
\end{remark}
\begin{lemma}
{\it  Let $G = \langle L \!\mid \!R\rangle$ be a positive homogeneously presented group, and let 
$G^+ = {\langle L \mid R\rangle}_{mo}$ be the associated monoid. Assume that the monoid $G^+$ is a cancellative monoid and $\mathcal{F}(G^+) \not=\emptyset$. Then:  \par 
 
 (1)  The localization homomorphism $\pi:  G^+\to G$ is injective. \par 
 (2)  The word problem in $G$ is solvable.\par
 (3)  The conjugacy problem in $G^+$ is solvable if and only if the conjugacy problem in $G$ is solvable.}
\end{lemma}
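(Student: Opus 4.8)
The plan is to treat the three parts in order, the common engine being the fundamental element $\Delta$ together with the observation recorded after Definition 2.7, namely that $\Delta^{m}\in\mathcal{Z}(G^{+})$ where $m:=\mathrm{ord}(\sigma_{\Delta})$; since $\Delta^{m}$ commutes with every generator, it is central in $G$ as well. For part (1) I would invoke \"Ore's criterion. Cancellativity is assumed, so it remains to produce common right multiples. As $\Delta$ is fundamental, every $\gamma\in G^{+}$ divides a power $\Delta^{l}$ from the left (the property already used in the proof of Proposition 2.4); hence for any $U,V\in G^{+}$ a sufficiently large $l$ gives $\Delta^{l}\in UG^{+}\cap VG^{+}$, a common right multiple. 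Thus $G^{+}$ satisfies the right \"Ore condition and embeds in its group of right fractions. Since the group completion of $\langle L\mid R\rangle_{mo}$ is exactly $\langle L\mid R\rangle=G$, the localization homomorphism $\pi$ is injective.

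For part (2) I would first turn every element of $G$ into a one-sided fraction with positive numerator. From $\Delta\deeq s\cdot\Delta_{s}$ one gets $s^{-1}=\Delta_{s}\Delta^{-1}$ in $G$; writing $\Delta^{-1}=\Delta^{m-1}(\Delta^{m})^{-1}$ and using centrality of $\Delta^{m}$, any word of $F(L)$ is rewritten, algorithmically, as $(\Delta^{m})^{-k}P$ with $k\ge 0$ and $P\in G^{+}$. To decide $g=g'$, multiply both representatives by a common central power $(\Delta^{m})^{K}$ so that they become \emph{positive} words; equality in $G$ then becomes an equality of positive words, which by part (1) holds in $G$ if and only if it holds in $G^{+}$. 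The latter is decidable because the presentation is homogeneous: the relations preserve length, there are only finitely many words of each length, and $\deeq$ is checked by enumerating the equivalence class. Hence the word problem in $G$ is solvable.

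For part (3) the entire content is the equivalence: for $U,V\in G^{+}$ one has $U\underset{mo}{\sim}V$ if and only if $U$ and $V$ are conjugate in $G$ (write $U\sim_{G}V$). The implication $\Rightarrow$ is immediate, since a positive conjugator lies in $G\supseteq G^{+}$. For $\Leftarrow$, take $g\in G$ with $gUg^{-1}=V$ and write $g=(\Delta^{m})^{-k}C$ with $C\in G^{+}$; because $\Delta^{m}$ is central the outer factors cancel, giving $gUg^{-1}=CUC^{-1}=V$, i.e.\ $CU\deeq VC$ with $C\in G^{+}$, which is precisely $U\underset{mo}{\sim}V$. Granting this equivalence, both directions follow. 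If the conjugacy problem in $G^{+}$ is solvable, then to test $g_{1}\sim_{G}g_{2}$ I multiply both by a large central power $(\Delta^{m})^{K}$ to obtain positive representatives $\widehat{P}_{1},\widehat{P}_{2}$; centrality gives $g_{1}\sim_{G}g_{2}\Leftrightarrow\widehat{P}_{1}\sim_{G}\widehat{P}_{2}$, and by the equivalence this is $\widehat{P}_{1}\underset{mo}{\sim}\widehat{P}_{2}$, decided by the monoid oracle. Conversely, if the conjugacy problem in $G$ is solvable, then for $U,V\in G^{+}$ the relation $U\underset{mo}{\sim}V$ coincides with $U\sim_{G}V$ and is decided by the group oracle.

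The main obstacle is the equivalence in part (3): that conjugacy of positive elements in the group is always realized by a \emph{positive} conjugator. This is exactly where centrality of $\Delta^{m}$ is indispensable, as it lets the negative $\Delta$-part of an arbitrary group conjugator be absorbed, and it is the step that upgrades the abstract \"Ore-type embedding into an effective reduction between the two conjugacy problems. The remaining bookkeeping, namely identifying the group of fractions with $G$ and normalizing group elements to positive representatives by central powers, is routine but must be carried out carefully to keep each reduction algorithmic.
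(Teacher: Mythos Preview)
Your proposal is correct and follows essentially the same route as the paper: both use \"Ore's criterion via left-divisibility by powers of $\Delta$ for (1), the central element $\Lambda=\Delta^{\mathrm{ord}(\sigma_{\Delta})}$ to reduce group equalities to monoid equalities decidable by homogeneity for (2), and centrality of $\Lambda$ to replace an arbitrary group conjugator by a positive one for (3). Your write-up is in fact more explicit than the paper's in making the algorithmic reductions precise (e.g.\ the rewriting $s^{-1}=\Delta_{s}\Delta^{-1}$ and the normalization $g=(\Delta^{m})^{-k}P$), but no new idea is involved.
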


\begin{proof}(1)  Let $\Delta \in \mathcal{F}(G^+)$ be a fundamental element. We can easily show that, for any $U \in G^+$, there exists a sufficiently large integer $l$ such that $U$ divides $\Delta^{l}$ from the left and the right. Hence, we show that the monoid $G^+$ satisfies \"Ore's condition (see \cite{[C-P]}). Therefore, the localization homomorphism $\pi$ is injective.\par
(2)  We put $\Lambda : = \Delta^{\mathrm{ord}(\sigma_{\Delta})}$, which belongs to the center $\mathcal{Z}(G^+)$ of the monoid $G^+$. 
For any two elements $U, V$ in $G$, there exists a non-negative integer $k$ in $\Z_{\ge0}$ such that both ${(\pi(\Lambda))}^{k} U$ and ${(\pi(\Lambda))}^{k} V$ are equivalent to positive words. Since the localization homomorphism $\pi$ is injective, there exists a unique element $U' \in G^+$ (resp. $V' \in G^+$) such that
\[
\pi(U')={(\pi(\Lambda))}^{k} U (\rm{resp}. \pi(V')={(\pi(\Lambda))}^{k} V).
\] 
 Therefore, we can show that $U = V$ can be shown in $G$ algorithmically if and only if $U' \deeq  V'$ can be shown in $G^+$ algorithmically. Because the monoid $G^+$ is an atomic monoid, we can obtain algorithmically all the possible expressions of two words  $U'$ and $V'$ in $G^+$ in a finite number of steps. Hence, by comparing two types of complete lists of all the possible expressions of words $U'$ and $V'$, we decide in a finite number of steps whether  $U' \deeq V'$ or not. Consequently, the word problem in $G$ can be solved.\par
(3)  If two elements $U$ and $V$ in $G$ are conjugate, then there exists a word $B$ such that $BU = VB$. There exists a non-negative integer $l$ in $\Z_{\ge0}$ such that ${(\pi(\Lambda))}^{l} B$ is equivalent to a positive word. Since ${\pi(\Lambda)}$ belongs to the center of the group $G$, we say that two elements $U$ and $V$ in  $G$ are conjugate precisely when there is a positive word $A$ such that $AU$ is equivalent to $VA$. Therefore, due to the injectivity of the localization homomorphism $\pi$, we can show that the conjugacy problem in $G^+$ is solvable if and only if the conjugacy problem in $G$ is solvable.
\end{proof}

\section{How to solve the conjugacy problem}

Let $G^+$ be a positive homogeneously presented cancellative monoid ${\langle L \mid R\,\rangle}_{mo}$ that carries a fundamental element $\Delta$. In this section, we will discuss how to solve the conjugacy problem for $G^+$ by referring to the method given by E. Brieskorn and K. Saito in \cite{[B-S]}. \\
For an arbitrary element $w \in G^+$, we put
\[
\mathrm{Conj}^{+}(w) := \{ V \in G^+ \mid AV \deeq wA,\, A \in G^+ \}.
\]
If we give an algorithm to decide the set $\mathrm{Conj}^{+}(w)$ for an arbitrary element $w \in G^+$, then we can say that the conjugacy problem for it is solvable. To decide the set $\mathrm{Conj}^{+}(w)$ for an arbitrary element $w \in G^+$, we introduce the following set
\[
\mathrm{O}^{(1)}(w; \Delta) := \{ V \in G^+ \mid AV \deeq wA,\, A\, |_l \Delta \}.\]
Since the set of left divisors of $\Delta$ is a finite set, we say that one can decide the set $\mathrm{O}^{(1)}(w; \Delta)$ algorithmically. Since the set $\mathrm{O}^{(1)}(w; \Delta)$ is a finite set, one can iterate the construction and obtain the sets
\[
\mathrm{O}^{(k+1)}(w; \Delta) := \{ V \in G^+ \mid AV \deeq UA,\,U \in \mathrm{O}^{(k)}(w; \Delta),\, A\, |_l \Delta \}.
\]
We easily show that the sets $\mathrm{O}^{(k)}(w; \Delta)$ can be decided algorithmically. We have $\mathrm{O}^{(k)}(w; \Delta)\subseteq \mathrm{O}^{(k+1)}(w; \Delta)$. Due to the homogeneity, we say that all elements in $\mathrm{O}^{(k)}(w; \Delta)$ have the same length. Hence, there exists a positive integer $k_0$ such that
\[
\mathrm{O}^{(k_0)}(w; \Delta) = \mathrm{O}^{(k_{0}+1)}(w; \Delta) = \cdots.
\]
We put
\[
\mathrm{O}(w; \Delta) := \mathrm{O}^{(k_0)}(w; \Delta).
\]
From the construction, we have $\mathrm{O}(w; \Delta) \subseteq \mathrm{Conj}^{+}(w)$. 

\begin{remark}{\it If an equality $\mathrm{O}(w; \Delta) = \mathrm{Conj}^{+}(w)$ holds for an arbitrary element $w \in G^+$, then the conjugacy problem for it is solvable.}
\end{remark}
\begin{definition}
{\it Let $G^+ = {\langle L \mid R\rangle}_{mo}$ be a positive homogeneously presented cancellative monoid that carries a fundamental element $\Delta$, and let $w$ be an element in $G^+$. \\

1. An element $A$ in $G^+$ is called a \emph{right transit element} of $w$ in $G^+$, if\\
$\mathrm{i})$  $A \not= \varepsilon$.\\
$\mathrm{ii})$  There exists an element $Q$ in $G^+$ such that an equation $AQ \deeq wA$ holds.\\
The set of all right transit elements of $w$ in $G^+$ shall be denoted by $\mathrm{Trans}(w)$. \par
2. A right transit element $A$ of $w$ is called \emph{minimal}, if any right transit element of $w$ dividing $A$ from the left coincides with $A$ itself.\\
The set of all right minimal transit elements of $w$ in $G^+$ shall be denoted by $\mathrm{Trans^{min}}(w)$.}
\end{definition}

\begin{definition}
{\it Let $G^+ = {\langle L \mid R\rangle}_{mo}$ be a positive homogeneously presented cancellative monoid that carries a fundamental element $\Delta$, and let $w$ be an element in $G^+$. \\
We introduce a property with respect to $w$:\\
$(\mathrm{P}(w; \Delta))$: For any element $A$ in $\mathrm{Trans}(w)$, there exists an element $\delta(A)$ in $\mathrm{Trans}(w)$ such that $\delta(A) \,|_l A$ and $\delta(A)\,|_l \Delta$ hold.
}
\end{definition}

\begin{lemma}
{\it  Let $G^+ = {\langle L \mid R\rangle}_{mo}$ be a positive homogeneously presented cancellative monoid that carries a fundamental element $\Delta$, and let $w$ be an element in $G^+$.\\
If the property $(\mathrm{P}(w; \Delta))$ with respect to $w$ is satisfied,
then an equality 
\[
\mathrm{O}(w; \Delta) = \mathrm{Conj}^{+}(w)
\]
 holds.}
\end{lemma}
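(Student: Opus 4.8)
The inclusion $\mathrm{O}(w;\Delta)\subseteq\mathrm{Conj}^{+}(w)$ is already recorded above, so the plan is to prove the reverse inclusion $\mathrm{Conj}^{+}(w)\subseteq\mathrm{O}(w;\Delta)$. I will use three preliminary facts, each an immediate consequence of cancellativity and homogeneity. First, conjugation is a partial right action: whenever $A\cdot U'\deeq U\cdot A$ and $B\cdot U''\deeq U'\cdot B$ one gets $(AB)\cdot U''\deeq U\cdot(AB)$, so transit data compose. Second, $\mathrm{O}(w;\Delta)$ contains $w$ (take $A\deeq\varepsilon$ in the definition of $\mathrm{O}^{(1)}$) and is closed under one-step conjugation by left divisors of $\Delta$: if $U\in\mathrm{O}(w;\Delta)$ and $A\,|_l\Delta$ with $A\cdot V\deeq U\cdot A$, then $V\in\mathrm{O}(w;\Delta)$, since the defining chain has stabilised at $\mathrm{O}^{(k_0)}$. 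Third, by Remark 1 the left- and right-divisors of the fundamental element $\Delta$ coincide, so for $\delta\,|_l\Delta$ the complement $\partial\delta$ defined by $\Delta\deeq\delta\cdot\partial\delta$ again satisfies $\partial\delta\,|_l\Delta$.

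The core is a peeling induction on the length of the conjugator. Let $V\in\mathrm{Conj}^{+}(w)$, say $A\cdot V\deeq w\cdot A$; I argue by induction on $|A|$ that $V\in\mathrm{O}(w;\Delta)$. If $|A|=0$ then $V\deeq w\in\mathrm{O}(w;\Delta)$. If $A\neq\varepsilon$ then $A\in\mathrm{Trans}(w)$, and property $(\mathrm{P}(w;\Delta))$ produces $\delta\in\mathrm{Trans}(w)$ with $\delta\,|_lA$ and $\delta\,|_l\Delta$. Writing $A\deeq\delta A'$ and letting $U$ be the unique solution of $\delta\cdot U\deeq w\cdot\delta$, the divisibility $\delta\,|_l\Delta$ puts $U\in\mathrm{O}^{(1)}(w;\Delta)\subseteq\mathrm{O}(w;\Delta)$; cancelling $\delta$ in $\delta A'\cdot V\deeq w\delta\cdot A'\deeq\delta U\cdot A'$ gives $A'\cdot V\deeq U\cdot A'$ with $|A'|<|A|$. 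Thus $V$ is a conjugate of the element $U\in\mathrm{O}(w;\Delta)$ through the strictly shorter conjugator $A'$, and I would like to repeat the peeling step on the pair $(U,A')$.

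To iterate, the splitting step must be available not only at $w$ but at the intermediate conjugate $U$, i.e. I need the descent statement that $(\mathrm{P}(w;\Delta))$ forces $(\mathrm{P}(U;\Delta))$ for every $U\in\mathrm{O}(w;\Delta)$; by induction on the number of $\Delta$-steps reaching $U$ this reduces to a single-step descent along $\delta\,|_l\Delta$. Here the composition law is the lever: given $B\in\mathrm{Trans}(U)$ one has $\delta B\in\mathrm{Trans}(w)$ with the same terminal conjugate, so $(\mathrm{P}(w;\Delta))$ yields $\gamma\in\mathrm{Trans}(w)$ with $\gamma\,|_l\delta B$ and $\gamma\,|_l\Delta$. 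When $\delta\,|_l\gamma$ with $\gamma\neq\delta$, write $\gamma\deeq\delta\beta$; cancelling $\delta$ in $\gamma\,|_l\delta B$ gives $\beta\,|_lB$, the factorisation $\gamma\deeq\delta\beta$ with $\gamma\in\mathrm{Trans}(w)$ gives $\beta\in\mathrm{Trans}(U)$ via the composition law, and cancelling $\delta$ in $\Delta\deeq\delta\,\partial\delta$ against $\gamma\,|_l\Delta$ gives $\beta\,|_l\partial\delta\,|_l\Delta$ by the third fact. Hence $\beta$ is the required $\Delta$-divisor transit prefix of $B$ and the induction closes.

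The main obstacle is exactly this alignment: I must guarantee that the prefix $\gamma$ delivered by $(\mathrm{P}(w;\Delta))$ can be taken to begin with the already-peeled factor $\delta$ and to extend properly into $B$ (so that $\delta\,|_l\gamma$ and $\gamma\neq\delta$), rather than merely left-dividing $\delta B$ in some skew way. In the Garside setting this would be immediate from taking greatest common divisors of $\delta$ and $\gamma$, but the present monoids fail the LCM, hence by Proposition 2.4 the GCD, condition by design, so no such lattice operation is available. I expect to overcome this either by strengthening the hypothesis to $(\mathrm{P}(\cdot;\Delta))$ at every element of the monoid — which is how the property is in fact verified for $G^{+}_{\mathrm{B_{ii}}}$ and $G^{+}_{m,n}$ in \S3--\S4, making the peeling step directly legitimate at each intermediate conjugate $U$ — or, working from $(\mathrm{P}(w;\Delta))$ alone, by invoking the reduction lemma together with cancellativity to control the left minimal common multiples of $\delta$ and $\gamma$ and so force the alignment.
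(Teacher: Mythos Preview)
Your overall plan---peel off a left divisor of $\Delta$ from the conjugator using the property, then induct on the length of what remains---is exactly the argument the paper gives. The paper's proof is three sentences: it asserts that $(\mathrm{P}(w;\Delta))$ yields a decomposition $A\deeq\delta_1\cdots\delta_k$ with each $\delta_i\,|_l\,\Delta$ and each $\delta_i\in\mathrm{Trans}(w)$, and concludes $V\in\mathrm{O}(w;\Delta)$.

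You have been more careful than the paper here, and the obstacle you isolate in your last two paragraphs is real. After the first step one has $A\deeq\delta_1 A'$ with $A'\in\mathrm{Trans}(U_1)$, not $\mathrm{Trans}(w)$, so $(\mathrm{P}(w;\Delta))$ alone does not immediately apply to $A'$; the paper's claim that the successive $\delta_i$ lie in $\mathrm{Trans}(w)$ is not justified by the hypothesis as stated. The alignment argument you sketch (forcing $\delta\,|_l\,\gamma$) does not go through without a GCD, and your second proposed fix via the reduction lemma is speculative.

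Your first proposed fix is the right one and is what the paper actually uses everywhere the lemma is applied: in Theorem~3.4 and throughout \S4 the property $(\mathrm{P}(w;\Delta))$ is verified for \emph{every} $w$ in the monoid, so at each stage of the peeling one has $(\mathrm{P}(U_i;\Delta))$ available and the induction is clean. Read the lemma as an abbreviation for ``if $(\mathrm{P}(\cdot;\Delta))$ holds for all elements, then $\mathrm{O}(w;\Delta)=\mathrm{Conj}^+(w)$ for all $w$''; with that reading your second paragraph already constitutes the complete proof, and your third and fourth paragraphs become unnecessary.
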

\begin{proof}
We take an element $V$ in $\mathrm{Conj}^{+}(w)$. By definition, there exists an element $A$ in $G^+$ such that an equality $AV \deeq wA$ holds. Since the property $(\mathrm{P}(w; \Delta))$ is satisfied, there exists a sequence $(\delta_{1}, \delta_{2}, \ldots, \delta_{k})$ of elements in $\mathrm{Trans}(w)$ such that each $\delta_{i}$ devides $\Delta$ $(i=1,\ldots, k)$ from the left and we have a decomposition
\[
A \deeq \delta_{1} \cdot \delta_{2} \cdots \delta_{k}.
\]
Hence, we have $V \in \mathrm{O}(w; \Delta)$.
\end{proof}

\begin{theorem}
{\it  Let $G^+ = {\langle L \mid R\rangle}_{mo}$ be a positive homogeneously presented cancellative monoid that carries a fundamental element $\Delta$ and satisfies the LCM condition.\\
Then, the conjugacy problem for $G^+$ is solvable.}
\end{theorem}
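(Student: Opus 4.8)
The plan is to reduce the Theorem to verifying the property $(\mathrm{P}(w;\Delta))$ for \emph{every} $w\in G^+$. By the Lemma immediately preceding the Theorem, once $(\mathrm{P}(w;\Delta))$ holds for a given $w$ we obtain $\mathrm{O}(w;\Delta)=\mathrm{Conj}^{+}(w)$; since $\mathrm{O}(w;\Delta)$ is a finite set that can be decided algorithmically, Remark 3.1 then yields solvability of the conjugacy problem. Thus everything comes down to producing, for an arbitrary right transit element $A$ of $w$, a transit element $\delta(A)$ with $\delta(A)\,|_l\,A$ and $\delta(A)\,|_l\,\Delta$. My candidate is $\delta(A):=\mathrm{gcd}_{l}(\{A,\Delta\})$. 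Note that, because $G^+$ satisfies the LCM condition, Proposition 2.4 guarantees the GCD condition, so this left greatest common divisor exists and the full calculus of left gcd's is at our disposal; in particular I may use $\mathrm{gcd}_{l}(\{cx,cy\})=c\cdot\mathrm{gcd}_{l}(\{x,y\})$, which follows from cancellativity together with the existence of left gcd's.

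The heart of the argument is the Claim that $d:=\mathrm{gcd}_{l}(\{A,\Delta\})$ is again a right transit element of $w$. First, $d\neq\varepsilon$: since $A\neq\varepsilon$ some atom $s\in\widetilde{L}$ left-divides $A$, and since $\Delta$ is fundamental every atom left-divides $\Delta$, so $s\,|_l\,d$. For the transport relation I use that a fundamental element is quasi-central: from $\Delta\deeq s\cdot\Delta_s\deeq\Delta_s\cdot\sigma(s)$ one reads off $s\cdot\Delta\deeq\Delta\cdot\sigma(s)$ for every atom $s$, and hence, expanding $w$ as a product of atoms and iterating, $w\cdot\Delta\deeq\Delta\cdot\sigma_\Delta(w)$ for the induced map $\sigma_\Delta$, with $\sigma_\Delta(w)\in G^+$. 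Now write $A\deeq d\cdot\alpha$ and $\Delta\deeq d\cdot\beta$; by cancellativity and multiplicativity of the left gcd one gets $\mathrm{gcd}_{l}(\{\alpha,\beta\})=\varepsilon$. The transport equation $AQ\deeq wA$ gives $w\cdot d\cdot\alpha\deeq d\cdot(\alpha Q)$, so $d\,|_l\,(wd\alpha)$; and $w\cdot\Delta\deeq\Delta\cdot\sigma_\Delta(w)$ gives $w\cdot d\cdot\beta\deeq d\cdot(\beta\,\sigma_\Delta(w))$, so $d\,|_l\,(wd\beta)$. Thus $d$ is a common left divisor of $wd\alpha$ and $wd\beta$, whence
\[
d\,|_l\,\mathrm{gcd}_{l}(\{wd\alpha,\,wd\beta\})=wd\cdot\mathrm{gcd}_{l}(\{\alpha,\beta\})=wd.
\]
Therefore $wd\deeq d\cdot Q_d$ for some $Q_d\in G^+$, so $d\in\mathrm{Trans}(w)$, proving the Claim; taking $\delta(A)=d$ establishes $(\mathrm{P}(w;\Delta))$, and the Theorem follows.

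I expect the main obstacle to be precisely this Claim — that the left gcd with $\Delta$ remains a transit element — and within it the two steps that must be checked with care: the multiplicativity $\mathrm{gcd}_{l}(\{cx,cy\})=c\,\mathrm{gcd}_{l}(\{x,y\})$ (resting on cancellativity and existence of left gcd's) and the passage from the atom-wise quasi-central relation to $w\cdot\Delta\deeq\Delta\cdot\sigma_\Delta(w)$ (requiring $w$ to be written as a product of atoms in $\widetilde{L}$ and the relation iterated). Everything else — finiteness and algorithmic decidability of $\mathrm{O}(w;\Delta)$, and the reduction of the conjugacy problem to it — is already supplied by the preceding lemmas and remarks. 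It is worth emphasizing that the LCM hypothesis is used only through Proposition 2.4, to secure existence and uniqueness of gcd's; this is exactly the structural input that the later sections must replace by an explicit, reduction-lemma-based analysis of $\mathrm{Trans^{min}}(w)$ when the LCM condition is absent.
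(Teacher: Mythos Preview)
Your proof is correct, but it follows a genuinely different route from the paper's. The paper verifies $(\mathrm{P}(w;\Delta))$ by an \emph{iterative} construction using only the LCM condition directly: starting from a letter $d_0:=l_0$ left-dividing $A$, it defines $d_{i+1}$ by $\mathrm{lcm}_r(\{d_i,\,w d_i\})=\{w d_{i+1}\}$, checks inductively that $d_i\,|_l\,d_{i+1}$, $d_i\,|_l\,A$, and $d_i\,|_l\,\Delta$ (the last using $d_i\,|_l\,\Delta\,|_l\,w\Delta$ via quasi-centrality), and then uses homogeneity to find a stable $d_{k_0}\in\mathrm{Trans}(w)$. You instead pass through Proposition~2.4 to get the GCD condition and take $d=\mathrm{gcd}_l(\{A,\Delta\})$ in one stroke, proving $d\,|_l\,wd$ by the neat computation $\mathrm{gcd}_l(\{wd\alpha,wd\beta\})=wd\cdot\mathrm{gcd}_l(\{\alpha,\beta\})=wd$. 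Your argument is shorter and more conceptual; the paper's is more constructive and closer in spirit to the reduction-lemma machinery it deploys in \S\S3--4, where one must trace through $\mathrm{mcm}_r$'s step by step because no gcd is available.

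One caveat worth making explicit: in this paper $\mathrm{gcd}_l$ is defined as the unique element of $\mathrm{max}_l(\mathrm{cd}_l(\{\cdot\}))$, so the property ``every common left divisor left-divides the gcd'' is not definitional. It does hold here because, as in the proof of Proposition~2.4, the gcd is realized as the lcm of the set of common divisors; you implicitly invoke this both when concluding $s\,|_l\,d$ and when passing from ``$d$ is a common left divisor of $wd\alpha,\,wd\beta$'' to $d\,|_l\,\mathrm{gcd}_l(\{wd\alpha,wd\beta\})$. Your multiplicativity step $\mathrm{gcd}_l(\{cx,cy\})=c\cdot\mathrm{gcd}_l(\{x,y\})$ is likewise justified once this ``true gcd'' property is in hand (together with cancellativity and atomicity to conclude $g=g'$ from $g\,|_l\,g'$ and $g'\,|_l\,g$). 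With that clarification, your proof stands.
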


\begin{proof}
We take an arbitrary element $w$ in $G^+$. We verify the property $(\mathrm{P}(w; \Delta))$ with respect to $w$. We take an element $A$ in $\mathrm{Trans}(w)$.
By definition, there exists an element $Q$ in $G^+$ such that an equation $AQ \deeq wA$ holds. Since the element $A$ is not $\varepsilon$, there exists a letter $l_0$ such that the letter $l_0$ devides $A$ from the left. Then, there exists a unique element $d_1$ such that
\[
\mathrm{lcm}_{r}(\{ l_0 , w \cdot l_{0} \}) = \{ w \cdot d_{1} \}.
\]
Thanks to the property of fundamental elements, we say that the element $d_{1}$ devides $\Delta$ from the left. We rewrite $d_0 = l_0$. One can iterate the construction and obtain a sequence $(d_0, d_1, d_2, \ldots)$ of elements in $G^+$. Namely, $d_i$ and $d_{i+1}$ satisfy the relation $\mathrm{lcm}_{r}(\{ d_i , w \cdot d_{i} \}) = \{ w \cdot d_{i+1} \}$ for $i=0, 1, \ldots$. We easily show that the element $d_{i}$ devides $d_{i+1}$ from the left for $i=0, 1, \ldots$. Thanks to the property of fundamental elements, we also say that the element $d_{i}$ devides $\Delta$ from the left for $i=0, 1, \ldots$. Due to the homogeneity of the defining relations in the monoid $G^+$, there exists a positive integer $k_0$ such that
\[
d_{k_0} \deeq d_{k_{0}+1} \deeq \cdots.
\]
Then, the element $d_{k_0}$ belongs to the set $\mathrm{Trans}(w)$. Thanks to the Lemma 3.3, we say that the conjugacy problem for $G^+$ is solvable.
\end{proof}
\begin{remark}{\it The element $d_{k_0}$ is not a right minimal transit element of $w$ in general.}
\end{remark}
Though there exists an element $w_0$ in $G^+$ such that the property $(\mathrm{P}(w_{0}; \Delta))$ is not satisfied, there is a possibility for solving the conjugacy problem for $G^+$. Due to the homogeneity, we show that for an arbitrary element $w$ in $G^+$ the set $\mathrm{Conj}^{+}(w)$ is a finite set. To solve the conjugacy problem for $G^+$, it is sufficient to show that for an arbitrary element $w$ in $G^+$ one can decide the set $\mathrm{Trans^{min}}(w)$ explicitly.\\\ \ \\
{\bf Example.}\,\,We recall an example, the monoid $G^+_{\mathrm{B_{ii}}}$, from \cite{[I1]}. The monoid $G^+_{\mathrm{B_{ii}}}$ has the following presentation 

\[
\begin{array}{lll}
G^+_{\mathrm{B_{ii}}}:=
\biggl{\langle}
a,b,c\,
\biggl{|}
\begin{array}{lll}cbb=bba,\\
 ab=bc,\\
 ac=ca
  \end{array}
\biggl{\rangle}_{mo} .
\end{array}
\]
We recall a lemma from \cite{[S-I]} \S7.

\begin{lemma}
{\it  Let $X$ and $Y$ be two words in $G^{+}_{\mathrm{B_{ii}}}$ of length $r\in \Z_{\ge0}$.
\smallskip
\\
{\rm (i)}\, If $vX \deeq \! vY$ for some $v \in \{a, b, c \}$, then $X \deeq \! Y$.\\
{\rm (ii)}\, If $a X \deeq b Y$, then $X \deeq b Z$, $Y \deeq c Z$ for some positive word $Z$.\\
{\rm (iii)}\, If $a X \deeq c Y$, then $X \deeq c Z$, $Y \deeq a Z$ for some positive word $Z$.\\
 {\rm (iv)}\, If $b X \deeq c Y$, then there exist an integer $k \in \Z_{\ge0}$ and a positive word $Z$ such that $X \deeq c^{k}ba \cdot Z$, $Y \deeq a^{k}bb \cdot Z$.}
 \par 
\end{lemma}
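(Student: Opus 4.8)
The plan is to prove the four assertions \emph{simultaneously}, by induction on the number $n$ of elementary equivalences in a chosen derivation witnessing the stated equality, exploiting the two quantities preserved by every relation of $G^+_{\mathrm{B_{ii}}}$: the number of occurrences of $b$, and the combined number of occurrences of $a$ and $c$ (since $cbb=bba$, $ab=bc$ and $ac=ca$ all identify $a$ with $c$ in the abelianization). Because the relations are length-preserving, the word length $r$ cannot serve as the decreasing quantity; the length of the derivation is the natural inductive parameter, and all four statements must be treated together because cancellation feeds the overlap statements and vice versa. The base case $n=0$ is immediate: the two sides are then literally identical, so $(\mathrm{i})$ gives $X\equiv Y$ and $(\mathrm{ii})$--$(\mathrm{iv})$ are vacuous, since distinct leading letters cannot coincide letter by letter.

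For the overlap statements $(\mathrm{ii})$--$(\mathrm{iv})$ the mechanism is to locate the \emph{first} elementary transformation that alters the leading letter. The transformations preceding it act entirely inside the tail and may be stripped to yield a syntactic factorization of $X$, while the firing transformation is one of the finitely many relations applicable at the front. For $(\mathrm{ii})$, $aX\deeq bY$, the leading $a$ can only change through $ab\to bc$ or $ac\to ca$; in the first case one reads off $X\deeq bX'$, the word becomes $bcX'$, and the strictly shorter remaining derivation $bcX'\deeq bY$ yields $Y\deeq cX'$ after cancelling $b$ by $(\mathrm{i})$, giving the claim with $Z=X'$. The case $ac\to ca$ routes through $(\mathrm{iv})$ and is reconciled using $cbb=bba$, while branches forcing an impossible leading letter are discarded by examining which leading letters a given word can reach. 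For $(\mathrm{iv})$, $bX\deeq cY$, the leading $b$ changes through $bba\to cbb$ (producing $k=0$, with $X\deeq baZ$, $Y\deeq bbZ$) or through $bc\to ab$; in the latter case $X\deeq cX'$, the word becomes $abX'$, and applying $(\mathrm{iii})$ and then $(\mathrm{iv})$ to the shorter remaining derivation produces $X'\deeq c^{k'}baZ$ and $Y\deeq a\cdot a^{k'}bbZ$, so the parameter is incremented to $k=k'+1$. Thus $k$ literally counts how many times $bc\to ab$ fires at the front, and the identities $bc^{k}\deeq a^{k}b$ and $ca^{k}\deeq a^{k}c$ show the two resulting forms agree.

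Cancellation $(\mathrm{i})$ is handled dually: given $vX\deeq vY$, if no transformation ever touches position $1$ the leading $v$ is simply stripped to give a derivation $X\deeq Y$; otherwise one again takes the first front transformation, which turns $v$ into a different letter and thereby converts the problem into an instance of one of $(\mathrm{ii})$--$(\mathrm{iv})$ carried by a strictly shorter derivation, from which the conclusion is reassembled together with lower cancellations. Since each such passage strictly shortens the derivation, the simultaneous induction closes.

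The delicate point is $(\mathrm{iv})$ together with the well-foundedness of the whole scheme. Because $b$ and $c$ do \emph{not} possess a unique minimal common right multiple---their minimal common right multiples form the infinite family $a^{k}bba$ ($k\ge0$), which is exactly the failure of the LCM condition for this monoid---the reduction for $(\mathrm{iv})$ need not terminate after a bounded number of front firings, and $k$ may grow with $r$. Hence word length is useless as a measure and one is forced onto the derivation length; the real work is to verify that every coupled invocation (an overlap statement feeding a cancellation, or $bc\to ab$ feeding $(\mathrm{iii})$ and then $(\mathrm{iv})$) is supplied with a \emph{strictly shorter} derivation, and to track how $k$ accumulates so that the final pair lands in precisely the asserted form. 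This bookkeeping, rather than any single algebraic identity, is the crux.
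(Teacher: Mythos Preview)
The paper itself does not prove this lemma; it is recalled from \cite{[S-I]}, \S7, without argument. So there is no in-paper proof to compare against, only the reference.

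Your overall strategy---the simultaneous treatment of (i)--(iv), isolating the first elementary transformation that changes the leading letter---is exactly the reduction-lemma technique of Garside and Brieskorn--Saito, and the case analysis you sketch is correct in outline. The gap is in the inductive scheme. You dismiss word length as an inductive parameter on the grounds that the relations are homogeneous, and induct solely on the length $n$ of a witnessing derivation. But homogeneity only says that length is constant \emph{along a single derivation}; it does not prevent the extracted words $X'$, $Z_1$, $Z$ from being strictly shorter than $X$ and $Y$, and it is precisely to these shorter words that the chained applications of the lemma must go.

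The failure is visible in your handling of (iv), case $bc\to ab$. From the strictly shorter derivation $abX' \deeq cY$ you correctly invoke (iii) by the inductive hypothesis and obtain $bX'\deeq cZ_1$ and $Y\deeq aZ_1$. You then wish to invoke (iv) on $bX'\deeq cZ_1$. But (iii) delivers this only as a monoid equality, with no bound on the length of any derivation witnessing it; your hypothesis on $n$ therefore does not cover it, and the ``real work'' you flag in your last paragraph cannot be completed on derivation length alone. What \emph{is} available is that $|X'|=|Z_1|=r-1$, so an outer induction on word length $r$ immediately supplies (iv) for this pair. The same pattern recurs at every coupled invocation you list: the second call is always to strictly shorter words, not to a strictly shorter derivation. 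Replace your single induction on $n$ by a double induction---outer on the common length $r$ of $X,Y$, inner on the derivation length $n$---and your argument goes through as written.
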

Thanks to the Lemma 3.5, we say that the monoid $G^+_{\mathrm{B_{ii}}}$ is a left cancellative monoid. In the monoid $G^+_{\mathrm{B_{ii}}}$, we have an anti-homomorphism 
 $\varphi:\mathrm{G^+_{B_{ii}}}\rightarrow\mathrm{G^+_{B_{ii}}}$,
 $W\mapsto \varphi(W):=\sigma$$(rev(W))$, where $\sigma$ is a
 permutation $\big(^{a\,\, b\,\, c}_{c\,\, b\,\, a}\big)$ and $rev(W)$
 is the reverse of the word $W=x_1 x_2 \cdots x_t$ ($x_i$ is a letter or
 an inverse of a letter) given by the word  $x_t  \cdots x_2 x_1$. If $\beta \alpha \deeq \gamma \alpha$, then $\varphi(\beta \alpha) \deeq \varphi(\gamma \alpha)$, i.e.,  $\varphi(\alpha) \varphi(\beta) \deeq \varphi(\alpha)\varphi(\gamma)$. Using the left cancellation condition, we obtain $\varphi(\beta) \deeq \varphi(\gamma)$ and, hence, $\beta \deeq \gamma $.\par
 From the Lemma 3.5 ${\rm (iv)}$, we say that in the monoid $G^+_{\mathrm{B_{ii}}}$ the LCM condition is not satisfied. Nevertheless, we can decide the set $\mathrm{Trans^{min}}(w)$ explicitly for an arbitrary element $w$ in $G^+_{\mathrm{B_{ii}}}$.\par
The monoid $G^+_{\mathrm{B_{ii}}}$ carries a unique indecomposable quasi-central element \\ $\Delta_0 := bbb$ that is not a fundamental element and infinite minimal fundamental elements $\Delta_k := (bc^{k})^3$. If $\Delta'$ is an indecomposable quasi-central element, then there exists a non-negative integer $k$ in $\Z_{\ge0}$ such that $\Delta'$ is equivalent to $\Delta_k$. By using the element $\Delta_0$, we can introduce a special normal form in the monoid $G^+_{\mathrm{B_{ii}}}$. Since both sides of the defining relations of the monoid $G^+_{\mathrm{B_{ii}}}$ contain the same number of the letter $b$, for an arbitrary element $W$ in the monoid $G^+_{\mathrm{B_{ii}}}$, the number of the letter $b$ in $W$ ought to be preserved in the process of rewriting $W$.\par
 For each $j \in \Z_{\ge0}$, let
\[
W(j) : =\{w \in G^+_{\mathrm{B_{ii}}} \mid w\,\, \mathrm{contains\,\, the\,\, letter\,\,} b\,\, \mathrm{just}\,\, j\mathrm{\mathchar`-times} \}.
\]
We recall two facts from \cite{[I1]} \S5.
\begin{proposition}
{\it If $w \in W(j)$ $(j \ge4)$, then $\Delta_0 |_l w$ and $\Delta_0 |_r w$.}
\end{proposition}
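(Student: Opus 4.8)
The plan is to reduce the two-sided divisibility assertion to a single purely combinatorial statement by exploiting that $\Delta_0=bbb$ is not merely quasi-central but \emph{central}. Indeed, one checks directly that
\[
a\cdot bbb \deeq bcbb \deeq bbba, \qquad c\cdot bbb \deeq bbab \deeq bbbc,
\]
so $s\cdot bbb \deeq bbb\cdot s$ for every $s\in\{a,b,c\}$, whence $bbb\in\mathcal{Z}(G^+_{\mathrm{B_{ii}}})$. Consequently, if $w$ is equivalent to a word in which three $b$'s occur consecutively, say $w \deeq \alpha\cdot bbb\cdot\beta$, then centrality yields $w \deeq bbb\cdot\alpha\beta$ and $w \deeq \alpha\beta\cdot bbb$ simultaneously, i.e. $\Delta_0|_l w$ and $\Delta_0|_r w$ at the same time. (The anti-homomorphism $\varphi$, which fixes $bbb$, gives a second, independent way of passing between the left and the right statements.) Thus everything reduces to the following claim: \emph{if $w\in W(j)$ with $j\ge4$, then $w$ is equivalent to a word in which $bbb$ occurs as a consecutive block.}

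To attack this, I would record each element having $j$ occurrences of $b$ by a representative $x_0\,b\,x_1\,b\cdots b\,x_j$ with $x_i\in\{a,c\}^{*}$, and track how the defining relations act on the gaps $x_i$. From $ab\deeq bc$ together with $ac\deeq ca$ one gets $a^{k}b\deeq bc^{k}$ (equivalently $bc^{k}\deeq a^{k}b$); hence inside a gap every $a$ may be pushed rightward across the following $b$, re-appearing as a $c$ in the next gap, and every $c$ pushed leftward across the preceding $b$, re-appearing as an $a$ in the previous gap. In particular any single interior gap can be emptied completely, which merges its two bordering $b$'s, and two $b$'s separated only by $c$'s coalesce directly through $bc^{k}\deeq a^{k}b$ into a pair $bb$ while shedding $a^{k}$ to the left. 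The relation $cbb\deeq bba$ plays the decisive and different role: it lets a pair $bb$ \emph{tunnel} past a $c$ immediately to its left, the $c$ re-emerging as an $a$ on the right of the pair. This is the only mechanism by which non-$b$ content can cross an already-formed $b$-block.

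With these moves the strategy is to coalesce the first two $b$'s of $w$ into a pair $bb$ and then to merge a third $b$ into it. Emptying the gap to the right of the pair is obstructed exactly by the $c$'s sitting there: they cannot be moved leftward through the pair without splitting it (since $bbc\deeq bab$), so they must instead be carried across the block by repeated use of $cbb\deeq bba$. The crux of the argument — and the step I expect to be the \emph{main obstacle} — is to show that this transport can always be completed once $j\ge4$; here the fourth $b$ supplies the extra gap needed to absorb the shed letters. I would carry out the coalescence by a finite case analysis on the contents of the first few gaps, invoking the reduction Lemma~3.5, whose part (iv) is tailor-made for this situation because it produces a pair $bb$ precisely out of a collision $bX\deeq cY$; the base case $j=4$ is settled by the same finite analysis.

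Finally, that the threshold is exactly four rather than three is not an artefact of the method: for $j=3$ the coalescence can genuinely fail, reflecting a modulo-$3$ constraint on how the $a$'s and $c$'s may be redistributed around the $b$'s, which is consistent with $\Delta_0=bbb$ being the relevant central element. As an alternative to the gap bookkeeping above, one could instead appeal to the explicit $\Delta_0$-based normal form of \cite{[I1]} and simply read off that a representative with $j\ge4$ must exhibit the block $bbb$; the centrality argument of the first paragraph then delivers both divisibilities at once.
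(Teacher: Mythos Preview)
The paper does not prove this proposition; it is quoted from \cite{[I1]}, \S5, without argument. So there is no in-paper proof to compare against, and your sketch has to stand on its own.

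Your reduction is sound: $\Delta_0=bbb$ is central (your two checks are correct), so it suffices to exhibit $bbb$ as a consecutive block in some representative of $w$. The substantive gap is in how you propose to manufacture that block. You correctly locate the obstruction --- after forming a pair $bb$, the $c$'s in the gap to its \emph{right} cannot cross it without splitting it (since $bbc\deeq bab$) --- but the remedy you name, ``carried across the block by repeated use of $cbb\deeq bba$'', points the wrong way: that relation transports a $c$ sitting to the \emph{left} of a $bb$ block over to an $a$ on the right. It does nothing for a $c$ on the right of your single pair, so the transport you describe cannot be completed as stated.

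What the fourth $b$ actually buys you is a \emph{second} $bb$ pair. Coalesce $b_1,b_2$ and, independently, $b_3,b_4$: each interior block $b\,a^{p}c^{q}\,b$ rewrites to $a^{q}\,bb\,c^{p}$ via $bc^{q}\deeq a^{q}b$ and $a^{p}b\deeq bc^{p}$. One is left with a configuration $\cdots bb\cdot g\cdot bb\cdots$ with $g\in\{a,c\}^{*}$. Now the gap $g$ empties completely and symmetrically: each $a$ at its left end is absorbed by the first pair via $bba\to cbb$, and each $c$ at its right end is absorbed by the second pair via $cbb\to bba$; the outcome is $\cdots bbbb\cdots$, hence $bbb$. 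Your instinct that the fourth $b$ is essential is correct, but the working mechanism uses \emph{both} pairs, one for each letter type, rather than tunnelling through a single block.

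Finally, your proposed shortcut through the $\Delta_0$-normal form of \cite{[I1]} (Proposition~3.7 here) is circular as written: that normal form asserts precisely that after extracting all $\Delta_0$-factors the remainder carries at most three $b$'s, which is the present proposition restated.
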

\noindent
Therefore, for an arbitrary element $w$ in $G^+_{\mathrm{B_{ii}}}$, we define a non-negative integer
\[
k(w):= \mathrm{max}\{ k \in  \Z_{\ge0} \mid \Delta^{k}_{0}\,|_l w \}.
\]
\begin{proposition}
{\it  Let $w$ be an element in the monoid $G^+_{\mathrm{B_{ii}}}$. 
We write \\
$w \deeq \Delta^{k(w)}_{0} \cdot w_{\mathrm{remain}}$. Let $j$ be the number of the letter $b$ in $w_{\mathrm{remain}}$.\\
Then, $w_{\mathrm{remain}}$ has the following normal form:\\
\noindent
\,\,\,\,$j = 0$\,: \,\,\,\, $w_{\mathrm{remain}} \deeq a^p c^q$  \,\,\,\,\,\,\,\,\,\,\,\,\,\,\,\,\,\,\,\,\,\,\,$(p,q\in \Z_{\ge0})$ \par\noindent 
\,\,\,\,$j = 1$\,: \,\,\,\, $w_{\mathrm{remain}} \deeq a^p c^q b a^r$\,\,\,\,\,\,\,\,\,\,\,\,$(p,q,r\in \Z_{\ge0})$
 \par\noindent 
\,\,\,\,$j = 2$\,: \,\,\,\, $w_{\mathrm{remain}} \deeq a^p c^q bb c^r$\,\,\,\,\,\,\,\,\,\,$(p,q,r\in \Z_{\ge0})$
 \par\noindent 
\,\,\,\,$j = 3$\,: \,\,\,\, $w_{\mathrm{remain}} \deeq a^p c^q b a^r bb$\,\,\,\,\,\,\,$(p,q,r\in \Z_{\ge0})$}
\end{proposition}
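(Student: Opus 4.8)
My plan is to split the argument into two independent parts: first a structural reduction showing that only the four listed values of $j$ can occur, and then a constructive part that rewrites an arbitrary word of each type into the asserted shape using nothing but the defining relations. Since each defining relation $cbb\deeq bba$, $ab\deeq bc$, $ac\deeq ca$ carries the same number of the letter $b$ on both sides, the quantity $j$ attached to $w_{\mathrm{remain}}$ is an invariant and is well defined. For the reduction I would argue that $\Delta_0$ cannot divide $w_{\mathrm{remain}}$ from the left: otherwise $\Delta_0^{k(w)+1}|_l w$ by left cancellativity (Lemma 3.5 (i)), contradicting the maximality built into $k(w)$. Then, if $w_{\mathrm{remain}}\in W(j)$ with $j\ge 4$, Proposition 3.7 would force $\Delta_0|_l w_{\mathrm{remain}}$, a contradiction; hence $j\in\{0,1,2,3\}$.

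The engine of the rewriting is a short toolkit of \emph{transport moves} deduced from the relations. From $ac\deeq ca$ the letters $a$ and $c$ commute. From $ab\deeq bc$ I would prove by induction the single-$b$ transport $a^{k}b\deeq bc^{k}$ and $bc^{k}\deeq a^{k}b$, and from $cbb\deeq bba$ the $bb$-transport $c^{k}bb\deeq bb\,a^{k}$ and $bb\,a^{k}\deeq c^{k}bb$. Combining the single-$b$ rule with itself gives the splitting identity
\[
bb\,c^{k}\deeq b\,(bc^{k})\deeq b\,a^{k}b ,
\]
which will be the one non-obvious ingredient. Together with commutation these rules suffice to move every letter into the target position, so the heart of the proof is simply choosing the right order of moves.

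The cases $j=0,1,2$ I expect to be routine. For $j=0$, sorting each maximal $\{a,c\}$-run by commutation already gives $a^{p}c^{q}$. For $j=1$, I would first sort and then push the $c$'s lying to the right of the unique $b$ across it (turning them into $a$'s on the left), landing on $a^{p}c^{q}b a^{r}$. For $j=2$, starting from a sorted word $a^{p_1}c^{q_1}\,b\,a^{p_2}c^{q_2}\,b\,a^{p_3}c^{q_3}$, I would merge the two $b$'s via $bc^{q_2}\deeq a^{q_2}b$ and $b\,a^{p_2}b\deeq bb\,c^{p_2}$, then clear the $a$'s standing to the left of $bb$ with $bb\,a^{p_3}\deeq c^{p_3}bb$, reaching $a^{p}c^{q}bb\,c^{r}$.

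I expect the genuine obstacle to be the case $j=3$. Merging the last two $b$'s exactly as in the case $j=2$ produces a word of shape $a^{p_1}c^{q_1}\,b\,a^{A}c^{B}\,bb\,c^{C}$, and the trailing $c^{C}$ to the \emph{right} of the final $bb$ cannot be eliminated by transport alone, since no move carries a $c$ leftwards across $bb$. This is exactly where the splitting identity is needed: applying $bb\,c^{C}\deeq b\,a^{C}b$ re-splits the terminal block to swallow the trailing $c$'s, and then re-merging the first two $b$'s (via $b\,a^{A}c^{B}b\deeq a^{B}bb\,c^{A}$, the $bb$-transport rule, and a final $bb\,c^{A}b\deeq b\,a^{A}bb$) collapses the word to $a^{p_1+B}c^{q_1+C}\,b\,a^{A}\,bb$, the asserted form $a^{p}c^{q}b\,a^{r}bb$. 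I would close by noting that these expressions are genuine normal forms: uniqueness of $(p,q,r)$ follows from left cancellativity together with the preserved $b$-count and the images of the exponents in the free abelian quotient on $a,c$; and that for $w_{\mathrm{remain}}$ one in fact has $r\ge 1$ in the case $j=3$, since $a^{p}c^{q}bbb\deeq\Delta_0\,a^{p}c^{q}$ would otherwise contradict the maximality of $k(w)$.
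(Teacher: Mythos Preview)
The paper does not actually prove this proposition: it is \emph{recalled} from \cite{[I1]}, \S5, so there is no in-paper argument to compare against. Your existence argument is correct and self-contained; every rewriting step you list is a valid consequence of the three defining relations, including the key ``split--remerge'' manoeuvre in the case $j=3$. Two minor slips: the result you invoke to rule out $j\ge 4$ is Proposition~3.6, not~3.7; and in the $j=2$ paragraph you write ``clear the $a$'s standing to the \emph{left} of $bb$'' when the $a^{p_3}$ you then transport with $bb\,a^{p_3}\deeq c^{p_3}bb$ sits to the \emph{right} of $bb$.

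The genuine gap is in your uniqueness claim. You appeal to ``the images of the exponents in the free abelian quotient on $a,c$'', but the relations force $a=c$ in the abelianization (from $ab\deeq bc$ one gets $a=c$, and $cbb\deeq bba$ gives the same), so the abelian quotient is generated by $b$ and a single class $[a]=[c]$. It therefore only recovers the total length and the $b$-count, not the individual exponents $p,q,r$. Uniqueness is still true, but it has to be extracted from the reduction lemma (Lemma~3.5): for $j=0$ only the commutation relation is applicable, so $a^{p}c^{q}$ are pairwise inequivalent; for $j\ge 1$ one argues by left-cancelling the common $a$-prefix and then repeatedly applying parts~(ii)--(iv) of Lemma~3.5 to reach a contradiction whenever two distinct normal forms are assumed equivalent. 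If you want to keep the closing sentence, replace the abelian-quotient appeal by a direct reference to Lemma~3.5 (or simply drop uniqueness, since the proposition as stated only asserts that $w_{\mathrm{remain}}$ \emph{has} such a form). Your final observation that $r\ge 1$ is forced when $j=3$ is correct and worth keeping.
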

\noindent
The element $\Delta_0$ belongs to the center $\mathcal{Z}(G^+_{\mathrm{B_{ii}}})$ of the monoid $G^+_{\mathrm{B_{ii}}}$. To solve the conjugacy problem for $G^+_{\mathrm{B_{ii}}}$, it is sufficient to show that, for an arbitrary element $w \in W(j)$ $(j \le3)$, we decide the set $\mathrm{Trans^{min}}(w)$ explicitly. Here, we only show the results. The calculations by using the Lemma 3.5 are written in \cite{[I4]}. We consider the following four cases.\\
{\bf Case 1.}  Let $w$ be an element in $W(0)$.\\
\noindent
\,\,\,\,$p = 0, q \ge1 $\,: \,\,\,\,$\mathrm{Trans^{min}}(w) = \{ a, c, bb \}$.   \,\,\,\,\,\,\,\,\,\,\,\,\,\,\,\,\,\,\,\,\,\,\, \par\noindent 
\,\,\,\,$q = 0, p \ge1 $\,: \,\,\,\,$\mathrm{Trans^{min}}(w) = \{ a, b, c \}$. \,\,\,\,\,\,\,\,\,\,\,\,
 \par\noindent 
\,\,\,\,$p, q \ge1 $\,: \,\,\,\,\,\,\,\,\,\,\,\,\,\,\,$\mathrm{Trans^{min}}(w) = \{ a, c \} \cup \{ bbc^{i}b \mid i = 0, 1, \ldots \}$. \,\,\,\,\,\,\,\,\,\,
 \par\noindent 
Since $\Delta_i \deeq bc^{i}bc^{i}bc^{i} \deeq bbc^{i}ba^{i}c^{i}$, we say that  for any element $u$ in $\mathrm{Trans^{min}}(w)$ there exists a non-negative integer $i \in \Z_{\ge0}$ such that $u$ devides $\Delta_i$ from the left.
\par\noindent
{\bf Case 2.}  Let $w$ be an element in $W(1)$.\\
\noindent
\,\,\,\,$p = 0, q \ge1, r\ge0 $\,: \,\,\,\,$\mathrm{Trans^{min}}(w) = \{ b, c \}$.   \,\,\,\,\,\,\,\,\,\,\,\,\,\,\,\,\,\,\,\,\,\,\, \par\noindent 
\,\,\,\,$q = 0, p \ge1,  r\ge0$\,: \,\,\,\,$\mathrm{Trans^{min}}(w) = \{ a, b, cba, \ldots, c^{r}ba^{r} \}$. \,\,\,\,\,\,\,\,\,\,\,\,
 \par\noindent 
\,\,\,\,$p, q \ge1, r\ge0$\,: \,\,\,\,\,\,\,\,\,\,\,\,\,\,\,$\mathrm{Trans^{min}}(w) = \{ a, b, c \}$. \,\,\,\,\,\,\,\,\,\,
 \par\noindent 
\,\,\,\,$p = q = 0,  r\ge0$\,: \,$\mathrm{Trans^{min}}(w) = \{ b, cba, \ldots, c^{r}ba^{r} \} \cup \{ a^{i}c^{i}ba^{i} \mid i = r+1, \ldots \}.$ \,\,\,\,\,\,\,\,\,\,
 \par\noindent 
Since $\Delta_i \deeq bc^{i}bc^{i}bc^{i} \deeq c^{i}ba^{i}bba^{i} \deeq a^{i}c^{i}ba^{i}bb$, we say that  for any element $u$ in $\mathrm{Trans^{min}}(w)$ there exists a non-negative integer $i \in \Z_{\ge0}$ such that $u$ devides $\Delta_i$ from the left.
\par\noindent
{\bf Case 3.}  Let $w$ be an element in $W(2)$.\\
\noindent
\,\,\,\,$p = 0, q \ge1, r\ge0 $\,: \,\,\,\,$\mathrm{Trans^{min}}(w) = \{ b, c \}$.   \,\,\,\,\,\,\,\,\,\,\,\,\,\,\,\,\,\,\,\,\,\,\, \par\noindent 
\,\,\,\,$q = 0, p \ge1, r\ge0$\,: \,\,\,\,$\mathrm{Trans^{min}}(w) = \{ a, b \}$. \,\,\,\,\,\,\,\,\,\,\,\,
 \par\noindent 
\,\,\,\,$p, q \ge1, r\ge0$\,: \,\,\,\,\,\,\,\,\,\,\,\,\,\,\,$\mathrm{Trans^{min}}(w) = \{ a, b, c \}$
. \,\,\,\,\,\,\,\,\,\,
 \par\noindent 
\,\,\,\,$p = q = 0, r\ge0$\,: \,$\mathrm{Trans^{min}}(w) = \{ b, acba, a^{2}c^{2}ba^{2}, \ldots \}.$ \,\,\,\,\,\,\,\,\,\,
\par\noindent
Since $\Delta_i \deeq bc^{i}bc^{i}bc^{i} \deeq a^{i}c^{i}ba^{i}bb$, we say that  for any element $u$ in $\mathrm{Trans^{min}}(w)$ there exists a non-negative integer $i \in \Z_{\ge0}$ such that $u$ devides $\Delta_i$ from the left.\par
\noindent
{\bf Case 4.}  Let $w$ be an element in $W(3)$.\\
We have $\mathrm{Trans^{min}}(w) = \{ a, b, c \}$.\\
For an arbitrary element $w$ in $G^+_{\mathrm{B_{ii}}}$, we easily show that, for any element $u$ in the set $\mathrm{Trans^{min}}(w)$, there exists a non-negative integer $i \in \Z_{\ge0}$ such that $u$ devides $\Delta_i$ from the left.\par
From the results on the monoid $G^+_{\mathrm{B_{ii}}}$, we may say that there is a relation between the set $\mathrm{Trans^{min}}(w)$ and the set of all indecomposable quasi-central elements and all minimal fundamental elements.
\begin{question}{\it Let $G^+ = {\langle L \mid R\rangle}_{mo}$ be a positive homogeneously presented cancellative monoid that carries a fundamental element, and let $w$ be an element in $G^+$. Furthermore, we suppose that the monoid $G^+$ is tame and the idealistic subsemigroup $\mathcal{F}(G^+) (\subseteq \mathcal{QZ}(G^+))$ is finitely generated by $\Delta_1,\ldots, \Delta_k$. We ask whether, for any element $u$ in the set $\mathrm{Trans^{min}}(w)$, there exists a minimal fundamental element $\Delta_i$ such that $u$ devides $\Delta_i$ from the left and the right. Then, from the set $\{ \Delta_1, \ldots, \Delta_k \}$, we construct a fundamental element $\widehat{\Delta}$ algorithmically such that each $\Delta_j$$( j= 1, \ldots, k )$ devides $\widehat{\Delta}$ from the left and the right. It comes to the conclusion that an equality 
\[
\mathrm{O}(w; \widehat{\Delta}) = \mathrm{Conj}^{+}(w)
\]
 holds.}
\end{question} 
\ \ \\
{\bf Example.}\,\, In \cite{[D4]}, the author investigated the monoids $M_1, M_2$ and $M_3$. All of them are positive homogeneously presented cancellative monoids that carry fundamental elements. We can easily show that they are tame and their idealistic subsemigroups are finitely generated. In each example, we can check the property $(\mathrm{P}(w; \widehat{\Delta}))$ for an arbitrary element $w$ in the monoid. It comes to the conclusion that the equality $\mathrm{O}(w; \widehat{\Delta}) = \mathrm{Conj}^{+}(w)$ holds.

\section{The conjugacy problem for the monoid $G_{m, n}^{+}$}
In this section, we will deal with the examples $G_{m, n}^{+}$ (\cite{[I3]}). For the monoids $G_{m, n}^{+}$, we will show that the idealistic subsemigroup is singly generated and they are tame monoids. However, they do not satisfy the LCM condition. Nevertheless, in the monoids $G_{m, n}^{+}$, we will show that the conjugacy problem can be solved by verifying the property $(\mathrm{P}(w; \Delta))$ for an arbitrary element $w$ in them.\par
First, we recall an example, the monoid $G^+_{m, n}$ ($m,n \in \{ 2, 3, \ldots \}$), from \cite{[I3]}. The author studied the presentation of the fundamental group of the complement of certain complexified real affine line arrangement and associated a monoid defined by it. The monoid has the following presentation:\par
\[
\begin{array}{rlll}
\Biggl{\langle}\!
s,t_1, \ldots, t_m, u_1, \ldots, u_n
\biggl{|}
\begin{array}{lll}
 \left[ s, t_1, \ldots, t_m \right], \left[ s, u_1, \ldots, u_n \right], \\
\left[ t_i, u_j \right] (i=1, \ldots, m, j=1, \ldots, n) \\
  \end{array}\
\Biggl{\rangle}_{mo}\ ,
\end{array}\
\] 
where a symbol $\left[ x_{i_1}, x_{i_2}, \ldots, x_{i_k} \right]$ denotes the cyclic relations:
\[
x_{i_1} x_{i_2} \cdots x_{i_k} = x_{i_2} \cdots x_{i_k}x_{i_1} = x_{i_k}x_{i_1} \cdots x_{i_{k-1}}.
\]
The monoid $G^+_{m, n}$ denotes the associated monoid. Before continuing further, we recall some notation from \cite{[I3]} \S4. We put
\[
\Delta := s \cdot t_1 \cdots t_m \cdot u_1 \cdots u_n,\,\,\Delta_1 := s \cdot t_1 \cdots t_m,\,\, \Delta_2 := s \cdot u_1 \cdots u_n,
\]
\[
I_1 := \{1, \ldots, m \},\,\, I_2 :=\{1, \ldots, n \},
\]
\[
L_{0} := \{\,s, t_1, \ldots, t_m, u_1, \ldots, u_n\, \}, L_1 := \{\, t_1, \ldots, t_m \, \},\,\, L_2 :=\{\, u_1, \ldots, u_n\, \},
\]
\[
F^{+}_1 := F^{+}(\underline{t}),\,\, F^{+}_2 := F^{+}(\underline{u}),
\]
\[
F^{+}_{1, \mathrm{rm}} := \{ w(\underline{t}) \in  F^{+}_1 \mid  (t_1 \cdots t_m) \not|_r w(\underline{t}) \},
\]
\[
F^{+}_{2, \mathrm{rm}} := \{ w(\underline{u}) \in  F^{+}_2 \mid  (u_1 \cdots u_n) \not|_r w(\underline{u}) \},
\]
\[
F^{+}_{1, \mathrm{cons}} := \{ w \in  F^{+}_1 \mid  \exists i_0, j_0 \in I_1 (i_0 \leq j_0)\,\, \mathrm{s.t.}\, w = t_{i_0}t_{i_0 + 1} \cdots t_{j_0}\},
\]
\[
F^{+}_{2, \mathrm{cons}} := \{ w \in  F^{+}_2 \mid  \exists i_0, j_0 \in I_2 (i_0 \leq j_0)\,\, \mathrm{s.t.}\, w = u_{i_0}u_{i_0 + 1} \cdots u_{j_0}\}.
\]
 For arbitrary element $w(\underline{t})$ in $F^{+}_{1}$ and $w(\underline{u})$ in $F^{+}_{2}$, we put
\[
\mathrm{Div}_1(w(\underline{t})) := \{ w \in  F^{+}_{1, \mathrm{cons}}  \mid  w \,|_r \,w(\underline{t}) \},
\]
\[
\mathrm{Div}_2(w(\underline{u})) := \{ w \in  F^{+}_{2
, \mathrm{cons}}  \mid  w \,|_r \,w(\underline{u}) \}.
\]
We remark that there exists a unique element $w_{0, 1}$ in $\mathrm{Div}_1(w(\underline{t}))$ (resp. $w_{0, 2}$ in $\mathrm{Div}_2(w(\underline{u}))$ ) such that $w_1 \,|_r \,w_{0, 1}$ for any element $w_1$ in $\mathrm{Div}_1(w(\underline{t}))$ (resp. $w_2 \,|_r \,w_{0, 2}$ for any element $w_2$ in $\mathrm{Div}_2(w(\underline{u}))$ ). We put
\[
\mathrm{C}_{1}(w(\underline{t})):= w_{0,1}, \mathrm{C}_{2}(w(\underline{u})):= w_{0,2}.
\]
In view of the defining relations of $G^+_{m, n}$, there exists an element  $w'(\underline{t})$ in $F^{+}_1$ (resp. $w'(\underline{u})$ in $F^{+}_2$) such that we have a decomposition $w(\underline{t}) \equiv w'(\underline{t}) \mathrm{C}_{1}(w(\underline{t}))$ (resp. $w(\underline{u}) \equiv w'(\underline{u}) \mathrm{C}_{2}(w(\underline{u})))$ in $G^+_{m, n}$. We put
\[
\mathrm{R}_{1}(w(\underline{t})):= w'(\underline{t}), \mathrm{R}_{2}(w(\underline{u})):= w'(\underline{u}).
\]
 For arbitrary left divisor $v_{1}$ of $\Delta_1$ (resp. $v_2$ of $\Delta_2$), the quotient can be uniquely determined in the monoid $G^+_{m, n}$ respectively. We denote it by $\Delta_{1, v_1}$ (resp. $\Delta_{2, v_2}$).\\
We recall a lemma from \cite{[I3]} \S4.
\begin{lemma}
{\it Let $X$ and $Y$ be positive words in $G^+_{m, n}$ of length $r\in \Z_{\ge0}$ and let $Y^{(h)}$ be a positive word in $G^+_{m, n}$ of length $h \in \{\,0, \ldots, r \}$.
\smallskip
\\
{\rm (i)}\, If $vX \deeq \! vY$ for some $v \in L_0$, then $X \deeq \! Y$.\\
{\rm (ii)}\, If $t_{i} X \deeq u_{j}Y$ $(t_i \in L_1, u_j \in L_2)$, then $X \deeq u_{j} Z$, $Y \deeq t_{i} Z$ for some positive word $Z$.\\
{\rm (iii)}\, If $sX \deeq w(\underline{t}) Y^{(h)}$ for some positive word $w(\underline{t})$ of length $r-h+1$ in $F^{+}_{1}$, then $X \deeq \Delta_{1, s} \cdot \mathrm{R}_{1}(w(\underline{t})) \cdot Z$, $Y^{(h)} \deeq \Delta_{1, \mathrm{C}_{1}(w(\underline{t}))} \cdot Z$ for some positive word $Z$.\\
{\rm (iv)}\, If $sX \deeq w(\underline{u}) Y^{(h)}$ for some positive word $w(\underline{u})$ of length $r-h+1$ in $F^{+}_{2}$, then $X \deeq \Delta_{2, s} \cdot \mathrm{R}_{2}(w(\underline{u})) \cdot Z$, $Y^{(h)} \deeq \Delta_{2, \mathrm{C}_{2}(w(\underline{u}))} \cdot Z$ for some positive word $Z$.\\
{\rm (v)}\, If $t_{i}X \deeq w(\underline{t}) Y^{(h)}$ for some $t_i$ in $L_1$ and some positive word $w(\underline{t})$ of length $r-h+1$ in $F^{+}_{1}$ that satisfies $t_i \,\not|_l \,w(\underline{t})$, then there exists a word $w(\underline{u})$ in $F^{+}_{2, \mathrm{rm}}$ such that $X \deeq w(\underline{u}) \cdot \Delta_{1, t_{i}} \cdot \mathrm{R}_{1}(w(\underline{t})) \cdot Z$, $Y^{(h)} \deeq w(\underline{u}) \cdot \Delta_{1, \mathrm{C}_{1}(w(\underline{t}))} \cdot Z$ for some positive word $Z$.\\
{\rm (vi)}\, If $u_{i}X \deeq w(\underline{u}) Y^{(h)}$ for some $u_i$ in $L_2$ and some positive word $w(\underline{u})$ of length $r-h+1$ in $F^{+}_{2}$ that satisfies $u_i \,\not|_l \,w(\underline{u})$, then there exists a word $w(\underline{t})$ in $F^{+}_{1, \mathrm{rm}}$ such that $X \deeq w(\underline{t}) \cdot \Delta_{2, u_{i}} \cdot \mathrm{R}_{2}(w(\underline{u})) \cdot Z$, $Y^{(h)} \deeq w(\underline{t}) \cdot \Delta_{2, \mathrm{C}_{2}(w(\underline{u}))} \cdot Z$ for some positive word $Z$.}\\
\end{lemma}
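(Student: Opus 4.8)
The plan is to prove the six assertions \emph{simultaneously} by induction on the common length $r$, in the spirit of the reduction lemma of Brieskorn--Saito but adapted to the cyclic relations $[s,t_1,\ldots,t_m]$ and $[s,u_1,\ldots,u_n]$. The statements are interlocking: each records, for a prescribed pair of admissible leading blocks, the shape of a common left multiple, and resolving any one case produces strictly shorter equations to which the remaining cases apply. For $r=0$ every hypothesis is either trivial (as in (i)) or vacuous, since two distinct atoms of $L_0$ cannot be equivalent under length-preserving relations whose sides all have length $\ge 2$; thus all the content lies in the inductive step. I would run the induction by inspecting a chain of elementary transformations realizing the given equality and tracking how the leftmost letter can migrate under the three families of relations.

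The commutation relations $[t_i,u_j]$ are purely local and dispatch the easy part of the work. They give (ii) at once: since $t_i$ and $u_j$ commute, their minimal common left multiple is $t_i u_j = u_j t_i$, so the equality $t_i X \deeq u_j Y$ forces $X \deeq u_j Z$ and $Y \deeq t_i Z$, and the word $Z$ is extracted by applying the inductive hypothesis to the shorter equation obtained after cancelling $t_i u_j$. Assertion (i) is subtler and cannot be read off the leading letter alone, because a cyclic relation can displace a leading $s$; it is therefore carried along in the same induction, and in the cases where the leading letter is consumed by a cyclic relation it is recovered from the structural descriptions (iii)--(vi).

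The heart of the argument is the cyclic relation, which governs (iii)--(vi). The key mechanism is that to move an $s$ past a block of $t$'s, or to reorder a $t_i$ against a $t$-block not beginning with $t_i$, one is forced to assemble the fundamental element $\Delta_1 = s\,t_1\cdots t_m$ (resp.\ $\Delta_2$). The bookkeeping is exactly what the functions $\mathrm{C}_1,\mathrm{R}_1$ and the set $\mathrm{Div}_1$ are designed to record: the maximal consecutive suffix $\mathrm{C}_1(w(\underline t))$ is the part of the $t$-block that can be rotated through $s$ via $[s,t_1,\ldots,t_m]$, while $\mathrm{R}_1(w(\underline t))$ is the inert remainder. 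I would first check that every consecutive block $t_{i_0}\cdots t_{j_0}$, being a prefix of a suitable cyclic rotation of $\Delta_1$, left-divides $\Delta_1$; this guarantees that the quotients $\Delta_{1,s}$ and $\Delta_{1,\mathrm{C}_1(w(\underline t))}$ in (iii) are well defined, and dually for (iv). In (v) and (vi) the extra factor $w(\underline u)\in F^{+}_{1,\mathrm{rm}}$ appears because the commuting $u$-letters can be slid freely to the left during the rewriting; one must verify that they accumulate into exactly a word avoiding the right factor $t_1\cdots t_m$, which is the membership condition defining $F^{+}_{1,\mathrm{rm}}$.

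The main obstacle I anticipate is precisely this cyclic case. Unlike an Artin presentation, where each relation couples only two generators and common multiples are two-letter $\mathrm{lcm}$'s, here a single relation and its rotations tie together $m+1$ generators, so the first collision between $s$ and a $t$-block need not be settled by a bounded local move; one must prove both that the common multiple recorded by $(\mathrm{C}_1,\mathrm{R}_1)$ is \emph{forced} and that no shorter one exists, and that chaining the cyclic relation with the commutations $[t_i,u_j]$ creates no unexpected coincidences. Closing the induction thus demands a well-chosen measure --- length together with, say, the position of the leading $s$ --- guaranteeing strict descent in each subcase, plus a confluence check that the two ways of driving the equation up to $\Delta_1$ agree. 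Finally, full left cancellation follows from (i) by induction on the length of the cancelled factor, and right cancellation from the length-reversing anti-automorphism $t_i\mapsto t_{m+1-i}$, $u_j\mapsto u_{n+1-j}$, $s\mapsto s$ combined with reversal (which carries the relation set to itself), exactly as for $G^{+}_{\mathrm{B_{ii}}}$, so that all the quotients $\Delta_{1,v_1}$, $Z$, \ldots\ occurring above are uniquely determined.
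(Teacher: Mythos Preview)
The paper does not actually prove this lemma: it is introduced with ``We recall a lemma from \cite{[I3]} \S4'' and is simply cited from the companion paper. So there is no in-paper proof to compare against; the paper uses the statement as a black box to establish left cancellativity and to drive the later analysis of $\mathrm{mcm}_r$.

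Your outlined approach---a simultaneous induction on the length $r$ for all six assertions, tracking how the leftmost letter is displaced along a chain of elementary equivalences, in the style of the Garside/Brieskorn--Saito reduction lemma---is the standard and correct strategy for this kind of result, and it is exactly the method used in the cited references. Your reading of the roles of $\mathrm{C}_1,\mathrm{R}_1$ and of the cyclic relation forcing the appearance of $\Delta_1$ is right.

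Two small points. First, there is an index slip in your discussion of (v): the extra word $w(\underline u)$ lies in $F^{+}_{2,\mathrm{rm}}$, not $F^{+}_{1,\mathrm{rm}}$, and the condition it must satisfy is $(u_1\cdots u_n)\nmid_r w(\underline u)$, not avoidance of $t_1\cdots t_m$; the reason is that if a full block $u_1\cdots u_n$ accumulated on the right, it would combine with the $s$ produced by the $\Delta_1$-rotation to form $\Delta_2$ and trigger further rewriting, so minimality forces the ``remainder'' condition. Second, and more substantively, what you have written is explicitly a plan rather than a proof: you flag the cyclic case as ``the main obstacle I anticipate'' and list what must be checked (forced common multiple, no shorter one, confluence of the two routes to $\Delta_1$, a descent measure) without carrying any of it out. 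That is precisely where the work lies, and it is nontrivial because, as you note, a single cyclic relation ties together $m+1$ generators; the actual inductive case analysis in \cite{[I3]} is lengthy. So the approach is right, but the proposal as it stands is an outline, not a proof.
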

Thanks to the Lemma 4.1, we say that the monoid $G^+_{m, n}$ is a left cancellative monoid. In the monoid $G^+_{m, n}$, we have an anti-homomorphism $\varphi:G^{+}_{m, n}\rightarrow G^{+}_{m, n}$,
 $W\mapsto \varphi(W):=\sigma$$(rev(W))$, where $\sigma$ is a
 permutation $\big(^{\,s\,\, t_1\,\,\, \cdots \,\, t_m\,\,u_1\,\,\,\cdots \,\,u_n}_{\,s\,\, t_m\,\cdots \,\,\, t_1\,\,\,u_n\,\,\cdots\,\,\, u_1}\big)$ and $rev(W)$
 is the reverse of the word $W=x_1 x_2 \cdots x_k$ ($x_i$ is a letter) given by the word  $x_k  \cdots x_2 x_1$. By a similar argument in Example \S3, we show that the monoid $G^{+}_{m, n}$ is a right cancellative monoid. \\
We easily show that $\Delta$ is a fundamental element. We recall two facts from \cite{[I3]} \S5.
\begin{proposition}
{\it The element $\Delta$ is a unique minimal fundamental element and $\mathcal{F}(G^+_{m, n}) = \mathcal{QZ}(G^+_{m, n}) \setminus \{ \varepsilon \}$.}
\end{proposition}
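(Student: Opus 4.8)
The plan is to reduce everything to the structure of the localized group $G_{m,n}$, whose centre I will show is infinite cyclic, generated by $\Delta$, and then to pull the conclusion back to the monoid through the embedding of Lemma 2.11(1). First I would remove the permutation from the picture. Since each defining relation has the same multiplicity of every letter on its two sides, the multiplicity map $w\mapsto(|w|_s,|w|_{t_1},\dots,|w|_{u_n})$ descends to a homomorphism from $G^{+}_{m,n}$ onto the free commutative monoid $\Z_{\ge0}^{\,m+n+1}$. If $\Delta'$ is quasi-central with permutation $\sigma$, the relation $v\cdot\Delta'\deeq\Delta'\cdot\sigma(v)$ forces the basis vectors $e_v=e_{\sigma(v)}$ for each $v\in\widetilde L=L_0$ (there are no dummy generators, as no relation has the form $g_i=w$), so $\sigma=\mathrm{id}$. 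Hence every quasi-central element is central; in particular the permutation of the fundamental element $\Delta$ is the identity and $\Delta\in\mathcal Z(G^{+}_{m,n})$, so $\mathcal{QZ}(G^{+}_{m,n})=\mathcal Z(G^{+}_{m,n})$ and it remains to compute this centre.

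By Lemma 2.11(1) the cancellative monoid with $\mathcal F(G^{+}_{m,n})\ne\emptyset$ embeds in $G_{m,n}$, and a central monoid element commutes with all generators of $G_{m,n}$, hence is central there; so I would determine $Z(G_{m,n})$. Write $\tau:=t_1\cdots t_m$ and $\mu:=u_1\cdots u_n$, so $\Delta=s\tau\mu$. Define $\phi:G_{m,n}\to F_m$ (free on $x_1,\dots,x_m$) by $t_i\mapsto x_i$, $u_j\mapsto 1$, $s\mapsto(x_1\cdots x_m)^{-1}$, and $\psi:G_{m,n}\to F_n$ (free on $y_1,\dots,y_n$) by $u_j\mapsto y_j$, $t_i\mapsto 1$, $s\mapsto(y_1\cdots y_n)^{-1}$; a direct check shows that each relation of $G_{m,n}$ maps under $\phi$ (resp.\ $\psi$) to a valid identity, so both are well defined. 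Set $\Phi=(\phi,\psi):G_{m,n}\to F_m\times F_n$. Then $\Phi$ is surjective, since its image contains $(x_i,1)=\Phi(t_i)$ and $(1,y_j)=\Phi(u_j)$; moreover $\iota:F_m\times F_n\to G_{m,n}$, $x_i\mapsto t_i$, $y_j\mapsto u_j$, is a homomorphism because the defining relations $[x_i,y_j]=1$ of $F_m\times F_n$ hold in $G_{m,n}$, and $\Phi\circ\iota=\mathrm{id}$. Thus $\Phi$ is a split surjection.

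Next I would identify $\ker\Phi$. Imposing $\Delta=1$ amounts to $s=\mu^{-1}\tau^{-1}$; substituting this, each cyclic relation becomes a consequence of $[t_i,u_j]=1$, so $G_{m,n}/\langle\!\langle\Delta\rangle\!\rangle\cong F_m\times F_n$ and the induced isomorphism is exactly $\Phi$. Hence $\ker\Phi=\langle\!\langle\Delta\rangle\!\rangle$, and since $\Delta$ is central (Step 1) its normal closure is just $\langle\Delta\rangle\cong\Z$. Because $m,n\ge2$, the free groups $F_m,F_n$ have trivial centre, so for $z\in Z(G_{m,n})$ the element $\Phi(z)$ is central in $F_m\times F_n$, forcing $\Phi(z)=1$ and $z\in\ker\Phi=\langle\Delta\rangle$. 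Therefore $Z(G_{m,n})=\langle\Delta\rangle$, and structurally $G_{m,n}\cong\langle\Delta\rangle\times(F_m\times F_n)$.

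Finally I would transport this back. A nontrivial element $\Delta'$ of $\mathcal{QZ}(G^{+}_{m,n})$ is a nontrivial central element of $G^{+}_{m,n}$, hence central in $G_{m,n}$, so $\Delta'=\Delta^{k}$ there for some $k\in\Z$; positivity (homogeneity gives $\Delta'$ length $k(m+n+1)>0$) forces $k\ge1$, and injectivity yields $\Delta'\deeq\Delta^{k}$ in the monoid. Conversely each $\Delta^{k}$ with $k\ge1$ is central and nontrivial, and lies in $\mathcal F(G^{+}_{m,n})$ since $\Delta\in\mathcal F(G^{+}_{m,n})$ and $\mathcal F(G^{+}_{m,n})\mathcal{QZ}(G^{+}_{m,n})=\mathcal F(G^{+}_{m,n})$. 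Thus $\mathcal F(G^{+}_{m,n})=\{\Delta^{k}\mid k\ge1\}=\mathcal{QZ}(G^{+}_{m,n})\setminus\{\varepsilon\}$. Minimality follows at once: $\Delta\,|_l\,\Delta^{k}$ for every $k\ge1$, so no $\Delta^{k}$ with $k\ge2$ is minimal, whereas $\Delta$ is divisible from either side by no fundamental element but itself; hence $\Delta$ is the unique minimal fundamental element. The one genuinely substantial step is the centre computation, concretely the equality $\ker\Phi=\langle\Delta\rangle$; this is precisely where the section $\iota$ and the centrality of $\Delta$ combine, exhibiting $G_{m,n}$ as the central extension $\langle\Delta\rangle\times(F_m\times F_n)$ and thereby pinning its centre down.
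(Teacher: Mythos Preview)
Your proof is correct. The paper itself does not prove this proposition; it is quoted from \cite{[I3]}, so there is no in-text argument to compare against directly. That said, your route is almost certainly different in spirit from a monoid-internal argument based on Lemma~4.1: you pass to the group $G_{m,n}$, exhibit it as a split central extension
\[
1 \longrightarrow \langle\Delta\rangle \longrightarrow G_{m,n} \xrightarrow{\ \Phi\ } F_m\times F_n \longrightarrow 1,
\]
read off $Z(G_{m,n})=\langle\Delta\rangle$ (using $Z(F_m)=Z(F_n)=1$ for $m,n\ge2$), and then pull back through the embedding $G^{+}_{m,n}\hookrightarrow G_{m,n}$ of Lemma~2.11(1). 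The reduction $\mathcal{QZ}=\mathcal Z$ via the multiplicity homomorphism is clean and uses exactly Remark~8. The one place you are a little terse is the order of $\Delta$ and the sign of $k$: it would be cleanest to note that the length map $G^{+}_{m,n}\to\Z_{\ge0}$ extends to a homomorphism $G_{m,n}\to\Z$ (abelianisation composed with the sum of coordinates), under which $\Delta\mapsto m+n+1$, so $\Delta$ has infinite order and any positive word equal to $\Delta^{k}$ in $G_{m,n}$ forces $k\ge1$; this is what you mean, and it completes the argument. The payoff of your approach is that it yields the structural fact $G_{m,n}\cong \Z\times(F_m\times F_n)$ en route, which is stronger than what the proposition asks and makes both the centre and the monoid statement immediate.
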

\begin{proposition}
{\it $\mathrm{mcm}_{r}(L_{0}) = \mathrm{mcm}_{l}(L_{0}) = \{ \Delta \}$.}
\end{proposition}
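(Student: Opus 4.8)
The plan is to deduce everything from a single \emph{absorption} statement about common right-multiples. Since every defining relation of $G^+_{m,n}$ is length-preserving and none has the form $g_i = w$ with $w$ a single generator, there are no dummy generators, so $\widetilde L = L_0$ and every generator is an atom. Because $\Delta$ is a fundamental element, for each $v \in L_0$ there is $\Delta_v$ with $\Delta \deeq v\cdot\Delta_v$, hence $v\,|_l\,\Delta$; thus $\Delta \in \mathrm{cm}_r(L_0)$. The heart of the matter is the claim that for every $w \in \mathrm{cm}_r(L_0)$ one has $\Delta\,|_l\,w$.

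Granting this claim, I would finish as follows. If $v \in \mathrm{cm}_r(L_0)$ and $v\,|_l\,\Delta$, the claim gives $\Delta\,|_l\,v$, so $v\,|_l\,\Delta\,|_l\,v$; homogeneity makes $v$ and $\Delta$ equal in length, forcing $v \deeq \Delta$. Hence $\Delta$ is a minimal element of $\mathrm{cm}_r(L_0)$, i.e. $\Delta \in \mathrm{mcm}_r(L_0)$. Conversely, any $w \in \mathrm{mcm}_r(L_0) \subseteq \mathrm{cm}_r(L_0)$ satisfies $\Delta\,|_l\,w$ by the claim, and minimality of $w$ together with $\Delta \in \mathrm{cm}_r(L_0)$ then forces $w \deeq \Delta$. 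Therefore $\mathrm{mcm}_r(L_0) = \{\Delta\}$.

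To prove the claim I extract $\Delta$ from $w$ in two stages via the reduction Lemma 4.1. First, as $s\,|_l\,w$ and $t_1\,|_l\,w$, write $w \deeq sX \deeq t_1 Y$ with $X,Y$ of equal length and apply Lemma 4.1(iii) with $w(\underline t)=t_1$; here $\mathrm{C}_1(t_1)=t_1$ and $\mathrm{R}_1(t_1)=\varepsilon$, so its conclusion reads $X \deeq \Delta_{1,s}\cdot Z \deeq (t_1\cdots t_m)\cdot Z$, whence $w \deeq s\cdot(t_1\cdots t_m)\cdot Z \deeq \Delta_1 Z$, i.e. $\Delta_1\,|_l\,w$. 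The symmetric argument with $u_1$ and Lemma 4.1(iv) gives $\Delta_2\,|_l\,w$. Writing $T:=t_1\cdots t_m$ and $U:=u_1\cdots u_n$, so that $\Delta_1 \deeq sT$, $\Delta_2 \deeq sU$ and $\Delta \deeq sTU \deeq \Delta_1 U$, the two divisibilities give $w \deeq \Delta_1 w' \deeq \Delta_2 w''$, and left-cancelling $s$ yields $Tw' \deeq Uw''$.

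The second stage shows $U\,|_l\,w'$, which gives $\Delta \deeq sTU\,|_l\,sTw' \deeq w$ and proves the claim. Since every $t_i$ commutes with every $u_j$, I peel off the letters of $U$ one at a time: applying Lemma 4.1(ii) to $t_1(t_2\cdots t_m w') \deeq u_1(u_2\cdots u_n w'')$ slides $u_1$ past $t_1$, and repeating past $t_2,\dots,t_m$ yields $u_1\,|_l\,w'$, say $w' \deeq u_1 w_1'$; then $Tu_1 w_1' \deeq u_1 Tw_1'$, and after left-cancelling $u_1$ the equation becomes $Tw_1' \deeq (u_2\cdots u_n)w''$, so induction on $n$ finishes $U\,|_l\,w'$. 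This commuting induction, together with the correct handling of $\mathrm{R}_1,\mathrm{C}_1,\Delta_{1,\cdot}$ in the first stage, is the main technical point; everything else is forced by cancellativity and homogeneity. Finally, the left statement follows from the right one through the anti-homomorphism $\varphi$ defined before Proposition 4.2: it is an involution, hence bijective, it interchanges $|_l$ with $|_r$, and since $\sigma$ permutes $L_0$ it carries $\mathrm{cm}_r(L_0)$ onto $\mathrm{cm}_l(L_0)$ and $\mathrm{mcm}_r(L_0)$ onto $\mathrm{mcm}_l(L_0)$. Using $t_iu_j \deeq u_jt_i$ and the two cyclic relations to rewrite $u_1\cdots u_n\,t_1\cdots t_m\,s \deeq s\,t_1\cdots t_m\,u_1\cdots u_n$ gives $\varphi(\Delta) \deeq \Delta$, whence $\mathrm{mcm}_l(L_0) = \{\varphi(\Delta)\} = \{\Delta\}$.
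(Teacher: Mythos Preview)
Your argument is correct. The paper itself does not prove this proposition here; it simply recalls it from \cite{[I3]} \S5, so there is no proof in this paper to compare against. On its own merits, your proof is sound: the two-stage extraction of $\Delta_1$ and then $U$ from any $w\in\mathrm{cm}_r(L_0)$ via Lemma~4.1(iii),(iv),(ii) is carried out correctly (in particular your computation of $\mathrm{C}_1(t_1)=t_1$, $\mathrm{R}_1(t_1)=\varepsilon$, $\Delta_{1,s}=t_1\cdots t_m$ is right), the deduction that $\mathrm{mcm}_r(L_0)=\{\Delta\}$ from the absorption claim is clean, and the passage to $\mathrm{mcm}_l(L_0)$ via the involutive anti-automorphism $\varphi$ together with the verification $\varphi(\Delta)\deeq\Delta$ is valid.
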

Thanks to the Proposition 4.2, we say that the idealistic subsemigroup \\
$\mathcal{F}(G^+_{m, n}) (\subseteq \mathcal{QZ}(G^+_{m, n}))$ is singly generated by $\Delta$ and the monoid $G^+_{m, n}$ is tame.\par
 From the Lemma 4.1 ${\rm (v)}$ and ${\rm (vi)}$ , we say that in the monoid $G^+_{m, n}$ the LCM condition is not satisfied. Nevertheless, we can verify the property $(\mathrm{P}(w; \Delta))$ for an arbitrary element $w$ in the monoid $G^+_{m, n}$.\par
We have an important remark on the monoid $G_{m, n}^{+}$.
\begin{remark}{\it  For each letter $v$ in $L_0$, both sides of the defining relations of $G^{+}_{m, n}$ contain the same number of the letter $v$. For arbitrary word $W$ in $G^{+}_{m, n}$, the number of the letter $v$ in $W$ ought to be preserved in the process of rewriting $W$.}
\end{remark}
We consider the following set
\[
\mathcal{W}_{m, n} := \{ w \in G^{+}_{m, n} \mid w \,\,\mathrm{does}\,\mathrm{not}\,\mathrm{contain}\,\Delta_{1}\, (\mathrm{resp.}\, \Delta_{2})\, \mathrm{as}\,\mathrm{substring}  \}.
\]

\begin{proposition}
{\it Let $w$ be an element in the set $G^{+}_{m, n}$. If neither $\Delta_{1, s}$ nor $\Delta_{2, s}$  devides $w$ from the left, then the element $w$ belongs to the set $\mathcal{W}_{m, n}$.}
\end{proposition}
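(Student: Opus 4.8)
The plan is to prove the contrapositive: if $w \notin \mathcal{W}_{m,n}$, i.e. $w \deeq \alpha \Delta_1 \beta$ or $w \deeq \alpha \Delta_2 \beta$ for some $\alpha,\beta \in G^+_{m,n}$, then $\Delta_{1,s} \deeq t_1\cdots t_m$ or $\Delta_{2,s} \deeq u_1\cdots u_n$ divides $w$ from the left. The presentation is symmetric under the exchange of the families $(t_1,\dots,t_m)$ and $(u_1,\dots,u_n)$, so it is enough to treat the occurrence of $\Delta_1$ and to prove $t_1\cdots t_m \,|_l\, w$. Since $\alpha\Delta_1 \,|_l\, w$, the task reduces to the single assertion: for every $\alpha \in G^+_{m,n}$ the element $P:=t_1\cdots t_m$ satisfies $P \,|_l\, \alpha\Delta_1$.

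First I would record the consequences of the defining relations that do all the work. The cyclic relation $[s,t_1,\dots,t_m]$ gives $\Delta_1 \deeq s\,t_1\cdots t_m \deeq t_1\cdots t_m\, s \deeq Ps$, so $P\,|_l\,\Delta_1$; since every cyclic shift of $s\,t_1\cdots t_m$ again equals $\Delta_1$, sliding a letter across a shift yields $s\Delta_1 \deeq \Delta_1 s$ and $t_i\Delta_1 \deeq \Delta_1 t_i$ for all $i$. Combining the same cyclic relation with the commutations $[t_i,u_j]$ gives the one genuinely non-central identity $u_j\Delta_1 \deeq u_j\,t_1\cdots t_m\,s \deeq t_1\cdots t_m\,u_j\,s \deeq P u_j s$, which still exhibits $P$ as a left factor. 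Finally $P$ commutes with $s$ (cyclic relation) and with each $u_j$ (the relations $[t_i,u_j]$), but not with the individual $t_i$.

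I would then prove the assertion by induction on the length of $\alpha$, prepending one generator at a time, and carrying the sharpened hypothesis that $\alpha\Delta_1 \deeq P\eta$ where $\eta$ contains at least one letter $s$ and the part of $\eta$ before its first $s$ is a word in the $u_j$ alone. The base case is $\Delta_1 \deeq Ps$ with $\eta \deeq s$. If $\alpha \deeq s\alpha'$ then $sP\eta \deeq Ps\eta$, and if $\alpha \deeq u_j\alpha'$ then $u_jP\eta \deeq Pu_j\eta$; both preserve the hypothesis. The only real obstacle is $\alpha \deeq t_i\alpha'$, precisely because $P$ does not commute with $t_i$. Writing $\eta \deeq Us\eta'$ with $U$ a word in the $u_j$, one computes $t_iP(Us\eta') \deeq U\,t_i(Ps)\,\eta' \deeq U\,(Ps)\,t_i\,\eta' \deeq P\,(Us\,t_i\eta')$, using that $P$ and $t_i$ both commute past $U$, and then the central identity $t_i(Ps) \deeq t_i\Delta_1 \deeq \Delta_1 t_i \deeq (Ps)t_i$ at the first $s$ of $\eta$. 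The result is again of the form $P\eta''$ with $\eta'' \deeq Us\,t_i\eta'$, whose initial $u$-block is unchanged, so the hypothesis survives.

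Thus the main difficulty is localized entirely in the non-commutation of $P$ with the $t_i$, and the mechanism that defeats it is the bookkeeping invariant on $\eta$: it guarantees that whenever a fresh $t_i$ is introduced on the left there is always an $s$ reachable past $u$-letters only, which activates the cyclic relation $t_i\Delta_1 \deeq \Delta_1 t_i$ and pushes $P$ back to the front. The argument uses only the defining relations (not cancellativity), so by the $t\leftrightarrow u$ symmetry the same scheme gives $\Delta_{2,s}\,|_l\,w$ whenever $w$ contains $\Delta_2$, which finishes the contrapositive.
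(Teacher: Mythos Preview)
Your proof is correct. The paper states Proposition~4.4 without proof, so there is no argument in the text to compare yours against; the result is evidently taken as a routine consequence of the presentation (and of the machinery from \cite{[I3]}). Your approach---proving the contrapositive by induction on the length of $\alpha$ while maintaining the invariant that $\alpha\Delta_1 \deeq P\eta$ with $\eta$ admitting a representative of the shape $U\,s\,\eta'$ where $U$ is a word in the $u_j$---is clean and self-contained. You correctly isolate the only non-trivial step (prepending a $t_i$) and resolve it using the cyclic identity $t_i\Delta_1 \deeq \Delta_1 t_i$ together with the commutations $[t_i,u_j]$; the observation that cancellativity is never used is worth keeping, since it means the statement holds at the level of the presentation itself.
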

By definition, in the process of rewriting $w$ in $\mathcal{W}_{m, n}$, we only use the defining relations $\left[ t_i, u_j \right] (i=1, \ldots, m, j=1, \ldots, n)$ in $G^{+}_{m, n}$. 
\begin{proposition}
{\it Let $w$ be an element in $\mathcal{W}_{m, n}$. Then, the element $w$ has the following normal form:\\
the element $w$ can be uniquely written in the form
\[
w \deeq w_{0}(\underline{t})\cdot w_{0}(\underline{u}) \cdot s \cdot w_{1}(\underline{t})\cdot w_{1}(\underline{u}) \cdot s \cdots s \cdot w_{N}(\underline{t})\cdot w_{N}(\underline{u}) ,
\]
where $w_{0}(\underline{t}), w_{1}(\underline{t}), \ldots,$ and $w_{N}(\underline{t})$ are some words in $F^{+}_{1}$ and $w_{0}(\underline{u}), w_{1}(\underline{u}), \ldots$ and $w_{N}(\underline{u})$ are some words in $F^{+}_{2}$.}
\end{proposition}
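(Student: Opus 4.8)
The plan is to reduce everything to the single family of commutation relations $[t_i,u_j]$, which by the observation immediately preceding the statement are the only defining relations that come into play while rewriting inside $\mathcal{W}_{m,n}$. Concretely, I would start from an arbitrary positive word $V$ representing $w$ and mark the positions of the letter $s$. If $s$ occurs $N$ times, those occurrences cut $V$ into $N+1$ maximal $s$-free subwords $B_0,B_1,\ldots,B_N$, each a word in the letters of $L_1\cup L_2$ (some blocks possibly empty). The asserted normal form is then exactly the claim that each block $B_i$ can be sorted so that all of its $t$-letters precede all of its $u$-letters.

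For existence I would sort each block independently. Within a fixed $B_i$ the only applicable moves are the commutations $u_j t_k\deeq t_k u_j$, since a $t$ and a $u$ are the only adjacent pair lying entirely in $L_1\cup L_2$ that these relations touch, and they never involve $s$. A bubble-sort — repeatedly replacing an adjacent pair $u_j t_k$ by $t_k u_j$ — terminates with every $t$-letter before every $u$-letter and manifestly preserves the relative order of the $t$'s among themselves and of the $u$'s among themselves. Writing $w_i(\underline{t})\in F^{+}_1$ for the resulting ordered string of $t$'s and $w_i(\underline{u})\in F^{+}_2$ for the ordered string of $u$'s, and reassembling the blocks around the $N$ untouched copies of $s$, I obtain a word of the claimed shape equivalent to $w$.

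For uniqueness the key is that the integer $N$ together with the data $(w_i(\underline{t}),w_i(\underline{u}))_{i=0}^{N}$ is an invariant of $w$. The number $N$ of occurrences of $s$ is preserved by every defining relation, by the remark that each letter $v\in L_0$ appears the same number of times on both sides of every relation of $G^{+}_{m,n}$. For the blocks, I would record that each elementary move $t_k u_j\leftrightarrow u_j t_k$ swaps two non-$s$ neighbours: it therefore fixes the positions of all copies of $s$, hence preserves the partition into $B_0,\ldots,B_N$, and within each block it exchanges a $t$ with a $u$ and so leaves unchanged both the ordered subsequence of $t$-letters and the ordered subsequence of $u$-letters. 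Since, by the observation preceding the statement, any two representatives of such a $w$ inside $\mathcal{W}_{m,n}$ are joined by exactly these moves, the block-wise ordered $t$- and $u$-subsequences are functions of $w$ alone; as each block of a normal-form word equals $w_i(\underline{t})\cdot w_i(\underline{u})$, two normal forms of $w$ must agree letter by letter.

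The main obstacle is entirely concentrated in the input I am permitted to assume, namely that rewriting within $\mathcal{W}_{m,n}$ never invokes the cyclic relations $[s,t_1,\ldots,t_m]$ or $[s,u_1,\ldots,u_n]$ — equivalently, that a chain of elementary equivalences between two representatives of such a $w$ may be chosen to stay inside $\mathcal{W}_{m,n}$ and hence to use only the commutations. Granting this, the remaining work is purely combinatorial, and I would spend the care on recording precisely the one fact that drives both halves of the argument: that a commutation move neither displaces any $s$ nor reorders the $t$'s or the $u$'s within a block.
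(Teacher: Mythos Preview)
Your proposal is correct, and it is essentially the argument the paper intends: the paper states this proposition without proof, treating it as an immediate consequence of the sentence just before it (``By definition, in the process of rewriting $w$ in $\mathcal{W}_{m,n}$, we only use the defining relations $[t_i,u_j]$''). Your bubble-sort existence argument and your invariant-based uniqueness argument are exactly the natural way to unpack that sentence.

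One remark on the concern you flag in your final paragraph. You worry whether a chain of elementary equivalences between two representatives of $w\in\mathcal{W}_{m,n}$ can be \emph{chosen} to stay inside $\mathcal{W}_{m,n}$. In fact no choice is needed: membership in $\mathcal{W}_{m,n}$ is a property of the monoid element $w$, not of a particular word, so \emph{every} word representing $w$ fails to contain any cyclic shift of $st_1\cdots t_m$ or $su_1\cdots u_n$ as a literal substring (otherwise $w$ would factor through $\Delta_1$ or $\Delta_2$ in the monoid). Hence from any such representative the cyclic relations are simply unavailable, and every chain of elementary equivalences between two representatives of $w$ automatically consists of commutations only. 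This is precisely what the paper's remark asserts, so you may invoke it without further justification.
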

 Before continuing further, we prepare some notation. For an element $w(\underline{t}) \in F^{+}_{1}$ (resp. $w(\underline{u}) \in F^{+}_{2}$), we define respectively 
\[
\widetilde{C}_{1}(w(\underline{t})\cdot s) := \left \{ \begin{array}{ll}
             C_{1}(w(\underline{t}))\cdot s & \mbox{if $t_{m}\, |_r 
\,C_{1}(w(\underline{t}))$} \\
                s & \mbox{if $t_{m} \not|_r 
\,C_{1}(w(\underline{t}))$}
          \end{array}
\right.
\]
\[
\widetilde{C}_{2}(w(\underline{u})\cdot s) := \left \{ \begin{array}{ll}
             C_{2}(w(\underline{u}))\cdot s & \mbox{if $u_{n}\, |_r 
\,C_{2}(w(\underline{u}))$} \\
                s & \mbox{if $u_{n} \not|_r 
\,C_{2}(w(\underline{u}))$}
          \end{array}
\right.
\]
For the element $\widetilde{C}_{1}(w(\underline{t})\cdot s)$ (resp. $\widetilde{C}_{2}(w(\underline{u})\cdot s)$ ), the quotient can be uniquely determined respectively. We denote it by $\widetilde{R}_{1}(w(\underline{t})\cdot s)$ (resp. $\widetilde{R}_{2}(w(\underline{u})\cdot s)$). For an arbitrary element $w$ in the monoid $G^{+}_{m, n}$, we define the set
\[
L(w) := \{ l \in  L_{0} \mid l \,|_l w \}.
\]
\begin{lemma}
{\it  Let $t_{i}$ be a letter in $L_1$. Let $w_{0}(\underline{t})$ be an element in $F^{+}_{1, \mathrm{rm}}$ that satisfies $t_{i} \not|_l w_{0}(\underline{t})$, and let $w_{0}(\underline{u})$ be an element in $F^{+}_{2, \mathrm{rm}}$.\\
Then, for an equation $t_{i} \cdot X \deeq w_{0}(\underline{t})\cdot  w_{0}(\underline{u}) \cdot s \cdot Y$, there exists a positive word $Z$ such that
\[
 X \deeq w_{0}(\underline{u}) \cdot \Delta_{1, t_{i}} \cdot \widetilde{R}_{1}(w_{0}(\underline{t})\cdot s) \cdot Z\,\, \mbox{and}\,\, Y \deeq \Delta_{1, \widetilde{C}_{1}(w_{0}(\underline{t})\cdot s)} \cdot Z.
\]}
\end{lemma}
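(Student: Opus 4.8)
The statement mirrors Lemma 4.1(v): the only new feature is the trailing letter $s$ on the right, and the devices $\widetilde{C}_1,\widetilde{R}_1$ are built precisely to record how that $s$ merges with the consecutive suffix $C_1(w_0(\underline{t}))$. The plan is therefore to reduce to Lemma 4.1(v), after two preliminary observations. Abbreviate $\widetilde{C}_1:=\widetilde{C}_1(w_0(\underline{t})\,s)$, $\widetilde{R}_1:=\widetilde{R}_1(w_0(\underline{t})\,s)$, and write $C_1:=C_1(w_0(\underline{t}))=t_{i_0}\cdots t_{j_0}$ with $w_0(\underline{t})\deeq R_1(w_0(\underline{t}))\cdot C_1$. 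First I would record the arithmetic of the tilde-operators: a cyclic rotation of $\Delta_1=s\,t_1\cdots t_m$ shows that $\widetilde{C}_1$ always left-divides $\Delta_1$, with $\Delta_{1,\widetilde{C}_1}\deeq t_1\cdots t_{i_0-1}$ when $t_m\,|_r\,C_1$ (so $j_0=m$, $\widetilde{C}_1=C_1 s$, $\widetilde{R}_1=R_1(w_0(\underline{t}))$) and $\Delta_{1,\widetilde{C}_1}\deeq t_1\cdots t_m$ when $t_m\,\not|_r\,C_1$ (so $\widetilde{C}_1=s$, $\widetilde{R}_1=w_0(\underline{t})$); in both cases $\widetilde{R}_1\,\widetilde{C}_1\deeq w_0(\underline{t})\,s$ and $\widetilde{C}_1\,\Delta_{1,\widetilde{C}_1}\deeq\Delta_1$. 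Second, I would prove that $\Delta_1$ commutes with every $t_k$, hence with every word in $F^{+}_1$; this is immediate from the equalities among the cyclic rotations of $\Delta_1$, and it is the fact that makes the final cancellation clean.

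With these in hand, the converse direction is organized around a single divisibility. Since $t_i$ commutes with every $u_j$ and $w_0(\underline{t})\,w_0(\underline{u})\deeq w_0(\underline{u})\,w_0(\underline{t})$, repeated use of Lemma 4.1(ii) strips the $u$-block off the left, giving $X\deeq w_0(\underline{u})\cdot X'$ together with the reduced equation $t_i\cdot X'\deeq w_0(\underline{t})\,s\,Y$. I claim it then suffices to prove $\Delta_{1,\widetilde{C}_1}\,|_l\,Y$. Indeed, granting it, one sets $Y\deeq\Delta_{1,\widetilde{C}_1}\cdot Z$ and computes
\[
t_i\,X'\deeq w_0(\underline{t})\,s\,Y\deeq \widetilde{R}_1\,\widetilde{C}_1\,\Delta_{1,\widetilde{C}_1}\,Z\deeq \widetilde{R}_1\,\Delta_1\,Z\deeq \Delta_1\,\widetilde{R}_1\,Z\deeq t_i\,\Delta_{1,t_i}\,\widetilde{R}_1\,Z,
\]
using the preliminary identities and the commutation of $\Delta_1$ with the pure $t$-word $\widetilde{R}_1$; left-cancelling $t_i$ yields $X'\deeq\Delta_{1,t_i}\,\widetilde{R}_1\,Z$, and hence $X\deeq w_0(\underline{u})\,\Delta_{1,t_i}\,\widetilde{R}_1\,Z$, as required. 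Reading the same display backwards also verifies the sufficiency direction, so the whole lemma collapses to the divisibility claim.

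To establish $\Delta_{1,\widetilde{C}_1}\,|_l\,Y$, I would apply Lemma 4.1(v) to $t_i\,X'\deeq w_0(\underline{t})\,(sY)$ — the hypotheses hold because $w_0(\underline{t})\in F^{+}_1$ and $t_i\,\not|_l\,w_0(\underline{t})$ (pure $t$-words form a free submonoid, so this just says the first letter is not $t_i$). This produces a reduced word $w(\underline{u})\in F^{+}_{2,\mathrm{rm}}$ and a positive word with $sY\deeq w(\underline{u})\,\Delta_{1,C_1}\,(\cdots)$. When $t_m\,|_r\,C_1$ the factor $\Delta_{1,C_1}$ begins with $s$; when $t_m\,\not|_r\,C_1$ one further applies Lemma 4.1(iii) to peel the leading consecutive block $t_{j_0+1}\cdots t_m$ and expose that $s$, which simultaneously forces $C_1\,|_l\,Z''$ and matches $\widetilde{R}_1=R_1(w_0(\underline{t}))\,C_1$ with $w_0(\underline{t})$.

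The main obstacle is exactly here: a priori the auxiliary block $w(\underline{u})$ is non-empty, and the decomposition returned by Lemma 4.1(v) is not unique (one can trade a nontrivial $w(\underline{u})$ for a longer tail). I would dispose of $w(\underline{u})$ by moving it to the right past the exposed $s$, which is legitimate precisely because $w(\underline{u})$, lying in $F^{+}_{2,\mathrm{rm}}$, contains no complete block $u_1\cdots u_n$, while $s$ can cross $u$-letters only through the cyclic relation $[\,s,u_1,\ldots,u_n\,]$. Projecting onto the submonoid $\langle s,u_1,\ldots,u_n\rangle$ and invoking its normal form makes this absorption rigorous and delivers $\Delta_{1,\widetilde{C}_1}\,|_l\,Y$. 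This absorption step, together with the bookkeeping of $\widetilde{C}_1,\widetilde{R}_1$, is where essentially all the work lies; everything else is cancellation and the centrality of $\Delta_1$ over $F^{+}_1$.
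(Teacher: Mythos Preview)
Your overall plan coincides with the paper's: strip $w_0(\underline{u})$ via Lemma 4.1(ii), apply Lemma 4.1(v) to $t_i\,X'\deeq w_0(\underline{t})\,(sY)$, and then extract $\Delta_{1,\widetilde{C}_1}\,|_l\,Y$ from the resulting decomposition $sY\deeq w_1(\underline{u})\,\Delta_{1,C_1}\,Z_1$. The reduction of the lemma to that single divisibility, and the cancellation argument deriving $X'$ from it, are both correct and clean.

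The gap is in your ``absorption'' of $w_1(\underline{u})$ when it is nonempty. You propose to move $w_1(\underline{u})$ to the right past the exposed $s$, but there is no relation $u_j\,s\deeq s\,u_j$ in $G^+_{m,n}$; the only interaction between $s$ and the $u$-letters is the full cyclic relation $[s,u_1,\ldots,u_n]$, and $w_1(\underline{u})\in F^{+}_{2,\mathrm{rm}}$ is precisely the condition that blocks this. Your projection to $\langle s,u_1,\ldots,u_n\rangle$ cannot rescue the argument either: that projection sends every $t_k$ to $1$, so it carries no information about whether the $t$-word $\Delta_{1,\widetilde{C}_1}$ left-divides $Y$. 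Nor can you simply declare $w_1(\underline{u})=\varepsilon$: Lemma 4.1(v) is only an existence statement, and nontrivial $w_1(\underline{u})$ genuinely occur in solutions (for instance with $m=3$, $n=2$, $t_i=t_1$, $w_0(\underline{t})=t_2t_3$, $Y=u_1u_2u_1t_1$).

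The paper treats exactly this case head-on. From $sY\deeq w_1(\underline{u})\,\Delta_{1,C_1}\,Z_1$ with $w_1(\underline{u})\neq\varepsilon$ it applies Lemma 4.1(iv) to obtain $Y\deeq\Delta_{2,s}\,R_2(w_1(\underline{u}))\,Z_2$ together with $\Delta_{2,C_2(w_1(\underline{u}))}\,Z_2\deeq\Delta_{1,C_1}\,Z_1$ (after cancelling $w_1(\underline{u})$). This last equation is then solved explicitly --- the general solution is $Z_1\deeq\Delta_{2,s}\,C_1\,Z_3$ and $Z_2\deeq\Delta_{1,s}\,C_2(w_1(\underline{u}))\,Z_4$ --- and substituting back yields $Y\deeq\Delta_{1,s}\cdot\Delta_{2,s}\cdot w_1(\underline{u})\cdot Z_4$, which is a fortiori left-divisible by $\Delta_{1,\widetilde{C}_1}$. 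This explicit resolution of the equation is the computation your sketch is missing.
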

\begin{proof}
Due to the Lemma 4.1, there exist $w_{1}(\underline{u})$ in $F^{+}_{2, \mathrm{rm}}$ and a word $Z_1$ such that 
\[
X \deeq w_{0}(\underline{u}) \cdot w_{1}(\underline{u}) \cdot \Delta_{1, t_{i}} \cdot R_{1}(w_{0}(\underline{t})) \cdot Z_{1}\,\, \mbox{and}\,\, s \cdot Y \deeq w_{1}(\underline{u}) \cdot \Delta_{1, C_{1}(w_{0}(\underline{t}))} \cdot Z_{1}.
\]
We consider the case $w_{1}(\underline{u}) \not= \varepsilon$. Then, there exists a word $Z_2$ such that $Y \deeq \Delta_{2, s} \cdot R_{2}(w_{1}(\underline{u})) \cdot Z_2$. Thus, we have an equation
\[
 \Delta_{2} \cdot R_{2}(w_{1}(\underline{u})) \cdot Z_{2} \deeq w_{1}(\underline{u}) \cdot \Delta_{1, C_{1}(w_{0}(\underline{t}))} \cdot Z_{1}.
\]
By deviding the common facter $w_{1}(\underline{u})$ from the left, we have an equation
\begin{equation}
\Delta_{2, C_{2}(w_{1}(\underline{u}))} \cdot Z_{2} \deeq \Delta_{1, C_{1}(w_{0}(\underline{t}))} \cdot Z_{1}.
\end{equation}
We can find a general solution of the equation (4.1)
\[
Z_1 \deeq  \Delta_{2, s} \cdot C_{1}(w_{0}(\underline{t})) \cdot Z_3,\,\,Z_2 \deeq  \Delta_{1, s} \cdot C_{2}(w_{1}(\underline{u})) \cdot Z_4.
\]
Thus, we have
\[
X \deeq w_{0}(\underline{u})\cdot w_{1}(\underline{u})\cdot \Delta_{1, t_{i}} \cdot R_{1}(w_{0}(\underline{t})) \cdot \Delta_{2, s} \cdot C_{1}(w_{0}(\underline{t})) \cdot Z_3 \,\,\,\,\,\,\,\,\,\,\,\,\,\,\,\,\,\,\,\,\,
\]
\[
\deeq w_{0}(\underline{u})\cdot \Delta_{1, t_{i}} \cdot w_{0}(\underline{t}) \cdot \Delta_{2, s} \cdot w_{1}(\underline{u}),\,\,\,\,\,\,\,\,\,\,\,\,\,\,\,\,\,\,\,\,\,\,\,\,\,\,\,\,\,\,\,\,\,\,\,\,\,\,\,\,\,\,\,\,\,\,\,\,\,\,\,\,\,\,\,\,\,\,\,\,\,\,\,\,\,\,
\]
\[
Y \deeq \Delta_{2, s} \cdot R_{2}(w_{1}(\underline{u})) \cdot \Delta_{1, s} \cdot C_{2}(w_{1}(\underline{u})) \cdot Z_4 \deeq \Delta_{1, s} \cdot \Delta_{2, s} \cdot w_{1}(\underline{u}).
\]
This completes the proof.
\end{proof}
\begin{lemma}
{\it  Let $t_{i}$ be a letter in $L_1$, and let $w$ be an element in $\mathcal{W}_{m, n}$ that contains at least one letter $s$ and satisfies $t_{i} \not|_l w$.We suppose that the element $w$ has the normal form: $w_{0}(\underline{t})\cdot w_{0}(\underline{u}) \cdot s \cdot w_{1}(\underline{t})\cdot w_{1}(\underline{u}) \cdot s \cdots s \cdot w_{N}(\underline{t})\cdot w_{N}(\underline{u}) \cdot s$.\\
Then, for an equation $t_{i} \cdot X \deeq w \cdot Y$, there exists a positive word $Z$ such that
\[
 X \deeq w_{0}(\underline{u}) \cdot \Delta_{1, t_{i}} \cdot w_{0}(\underline{t}) \cdot w_{1}(\underline{u}) \cdot s \cdots s \cdot w_{N-1}(\underline{t}) \cdot w_{N}(\underline{u}) \cdot s \cdot \widetilde{R}_{1}(w_{N}(\underline{t})s) \cdot Z, 
\]
\[
Y \deeq \Delta_{1, \widetilde{C}_{1}(w_{N}(\underline{t})\cdot s)} \cdot Z.\,\,\,\,\,\,\,\,\,\,\,\,\,\,\,\,\,\,\,\,\,\,\,\,\,\,\,\,\,\,\,\,\,\,\,\,\,\,\,\,\,\,\,\,\,\,\,\,\,\,\,\,\,\,\,\,\,\,\,\,\,\,\,\,\,\,\,\,\,\,\,\,\,\,\,\,\,\,\,\,\,\,\,\,\,\,\,\,\,\,\,\,\,\,\,\,\,\,\,\,\,\,\,\,\,\,\,\,\,\,\,\,\,\,\,\,\,\,\,\,\,\,\,\,\,\,\,\,\,\,\,\,\,\,\,\,\,\,\,\,\,\,\,
\]
}
\end{lemma}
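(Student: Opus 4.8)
The plan is to argue by induction on the number $N$ of $s$-syllables appearing in the normal form of $w$. When $N=0$ we have $w \deeq w_{0}(\underline{t})\cdot w_{0}(\underline{u})\cdot s$; here the uniqueness of the normal form of Proposition 4.6 together with $w\in\mathcal{W}_{m,n}$ forces $w_{0}(\underline{t})\in F^{+}_{1,\mathrm{rm}}$ and $w_{0}(\underline{u})\in F^{+}_{2,\mathrm{rm}}$ (a trailing $t_{1}\cdots t_{m}$ could be slid across the following $s$ via $t_{1}\cdots t_{m}s \deeq s\,t_{1}\cdots t_{m}$, either violating uniqueness or producing a $\Delta_{1}$-substring), while $t_{i}\not|_l w$ gives $t_{i}\not|_l w_{0}(\underline{t})$. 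Thus the base case is exactly Lemma 4.6, and it remains to push $t_{i}$ through one leading syllable at a time.

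For the inductive step I would split off the first syllable, writing $w \deeq w_{0}(\underline{t})\cdot w_{0}(\underline{u})\cdot s\cdot w^{(1)}$ with $w^{(1)} \deeq w_{1}(\underline{t})\cdot w_{1}(\underline{u})\cdot s\cdots s\cdot w_{N}(\underline{t})\cdot w_{N}(\underline{u})\cdot s$, and apply Lemma 4.6 to $t_{i}\cdot X \deeq \bigl(w_{0}(\underline{t})\,w_{0}(\underline{u})\,s\bigr)\cdot\bigl(w^{(1)}Y\bigr)$, regarding $w^{(1)}Y$ as the unknown tail. This produces a word $Z_{0}$ with
\[
X \deeq w_{0}(\underline{u})\cdot\Delta_{1,t_{i}}\cdot\widetilde{R}_{1}(w_{0}(\underline{t})s)\cdot Z_{0},\qquad w^{(1)}\cdot Y \deeq \Delta_{1,\widetilde{C}_{1}(w_{0}(\underline{t})s)}\cdot Z_{0}.
\]
The second relation is the crux: by the definition of $\widetilde{C}_{1}$ and the cyclic relation $[s,t_{1},\ldots,t_{m}]$, the left factor $\Delta_{1,\widetilde{C}_{1}(w_{0}(\underline{t})s)}$ is a \emph{consecutive} left divisor $t_{1}\cdots t_{p_{0}}$ of $\Delta_{1}$, where one distinguishes $t_{m}\,|_r\,C_{1}(w_{0}(\underline{t}))$ (giving $p_{0}=i_{0}-1$) from $t_{m}\not|_r\,C_{1}(w_{0}(\underline{t}))$ (giving $\widetilde{C}_{1}=s$, $p_{0}=m$). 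Pushing this block through $w^{(1)}$, then commuting the $u$-blocks past it and using
\[
t_{1}\cdots t_{m}\cdot s \deeq \Delta_{1}\deeq s\cdot t_{1}\cdots t_{m}
\]
to realign, is what generates the index-shifted pattern $w_{0}(\underline{t})\,w_{1}(\underline{u})\,s\cdots s\,w_{N-1}(\underline{t})\,w_{N}(\underline{u})\,s\,\widetilde{R}_{1}(w_{N}(\underline{t})s)$ inside $X$ and leaves $Y \deeq \Delta_{1,\widetilde{C}_{1}(w_{N}(\underline{t})s)}\cdot Z$. Throughout, left-cancellativity (Lemma 4.1 (i)) is used to strip common left factors, exactly as in the proof of Lemma 4.6, and the anti-homomorphism $\varphi$ with right-cancellativity pins down the residual $Z$.

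The main obstacle is precisely that a single application of Lemma 4.6 replaces the single letter $t_{i}$ by a consecutive block $t_{1}\cdots t_{p_{0}}$, so the naive induction hypothesis (phrased for one letter) does not re-apply to the residual equation $t_{1}\cdots t_{p_{0}}\cdot Z_{0}\deeq w^{(1)}Y$. I would resolve this by proving the strengthened statement in which the transported element is an \emph{arbitrary} consecutive left divisor $c\in F^{+}_{1,\mathrm{cons}}$ of $\Delta_{1}$ (with $\Delta_{1,t_{i}},\widetilde{R}_{1},\widetilde{C}_{1}$ replaced by their $c$-analogues), inducting on the pair $(N,\ \mathrm{length}(c))$; the wanted lemma is the case $c=t_{i}$. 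Equivalently one may transport the block one letter at a time, each letter-push through the $N$-syllable tail $w^{(1)}$ being covered by the induction hypothesis on $N$. Either way the genuinely laborious part is the combined bookkeeping — the two cases for each $\widetilde{C}_{1}(w_{k}(\underline{t})s)$ and the tracking of the $t$-/$u$-index shift across all $N$ syllables — but each individual rewriting step is governed by Lemma 4.1 and Lemma 4.6 and becomes routine once the block-strengthened hypothesis is in place.
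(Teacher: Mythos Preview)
Your approach is essentially the paper's own: the paper's proof is the two-line statement ``apply Lemma~4.6 to $t_i\cdot X \deeq w\cdot Y$ repeatedly to see that $\Delta_{1,\widetilde{C}_1(w_N(\underline{t})s)}$ left-divides $Y$, then substitute $Y \deeq \Delta_{1,\widetilde{C}_1(w_N(\underline{t})s)}\cdot Z$ back and cancel to obtain $X$.'' Your induction on $N$ with Lemma~4.6 as the inductive engine is exactly this argument made precise, and you have correctly isolated the one genuine subtlety the paper passes over in silence --- that a single application of Lemma~4.6 replaces $t_i$ by a consecutive block $t_1\cdots t_{p_0}$, so the literal hypothesis of Lemma~4.6 does not re-apply verbatim; your proposed cures (strengthen to consecutive left divisors of $\Delta_1$, or peel off one letter at a time using left-cancellation when the leading letters agree) are both sound ways to make ``repeatedly'' rigorous.
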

\begin{proof}
By applying the Lemma 4.6 to the equation $t_{i} \cdot X \deeq w \cdot Y$ repeatedly, we say that the element $\widetilde{C}_{1}(w_{N}(\underline{t})\cdot s)$ devides $Y$ from the left. Then, we write $Y \deeq \Delta_{1, \widetilde{C}_{1}(w_{N}(\underline{t})\cdot s)} \cdot Z$ by
 some $Z$. By substituting this for the equation $t_{i} \cdot X \deeq w \cdot Y$, we have the result. This completes the proof.
\end{proof}

\begin{lemma}
{\it  Let $t_{i}$ be a letter in $L_1$, and let $w$ be an element in $\mathcal{W}_{m, n}$ that contains at least one letter $s$ and satisfies $t_{i} \not|_l w$.We suppose that the element $w$ has the normal form: $w_{0}(\underline{t})\cdot w_{0}(\underline{u}) \cdot s \cdot w_{1}(\underline{t})\cdot w_{1}(\underline{u}) \cdot s \cdots s \cdot w_{N}(\underline{t})\cdot w_{N}(\underline{u})$.\\
Then, $\mathrm{mcm}_{r}(\{ t_{i},  w \})$
\[
 = \left \{ \begin{array}{ll}
       \{ w \cdot  \Delta_{1, \widetilde{C}_{1}(w_{N-1}(\underline{t})\cdot s)\cdot w_{N}(\underline{t})}  \} & \mbox{if $w_{N}(\underline{t})\, |_l \,\Delta_{1, \widetilde{C}_{1}(w_{N-1}(\underline{t})\cdot s)}$} \\
       \{ w \cdot w'(\underline{u})\cdot \Delta_{1, C_{1}(w_{N}(\underline{t}))} \mid w'(\underline{u})\in F^{+}_{2, \mathrm{rm}} \}   & \mbox{if $w_{N}(\underline{t})\, \not|_l \,\Delta_{1, \widetilde{C}_{1}(w_{N-1}(\underline{t})\cdot s)}$}
          \end{array}
\right.
\]
}
\end{lemma}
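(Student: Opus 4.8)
The plan is to compute $\mathrm{mcm}_{r}(\{t_i,w\})$ by first reducing it to a divisibility question living entirely in the letters $t_1,\dots,t_m$, and then resolving that question at the last block of the normal form. Since every right common multiple $V$ of $t_i$ and $w$ satisfies $w\,|_l\,V$, I write $V\deeq w\cdot Y$; by left cancellativity $V_1\,|_l\,V_2\iff Y_1\,|_l\,Y_2$, so determining $\mathrm{mcm}_{r}(\{t_i,w\})$ amounts to finding the $|_l$-minimal $Y$ with $t_i\,|_l\,w\cdot Y$. I factor the normal form as $w\deeq P\cdot w_N(\underline t)\cdot w_N(\underline u)$, where $P\deeq w_0(\underline t)w_0(\underline u)\,s\cdots s\,w_{N-1}(\underline t)w_{N-1}(\underline u)\,s$ ends in $s$ and is of the shape required by Lemma 4.7 (and by Lemma 4.6 when $N=1$). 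Since $t_i\not|_l w$ and $P\,|_l\,w$, also $t_i\not|_l P$, so Lemma 4.7 applies to $t_i\cdot X\deeq P\cdot\bigl(w_N(\underline t)w_N(\underline u)Y\bigr)$ and tells me that $t_i\,|_l\,w\cdot Y$ holds if and only if $D\,|_l\,w_N(\underline t)\,w_N(\underline u)\,Y$, where I set $D:=\Delta_{1,\widetilde C_1(w_{N-1}(\underline t)\cdot s)}$.

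The decisive observation is that $\widetilde C_1(w_{N-1}(\underline t)\cdot s)$ is a left divisor of $\Delta_1$ ending in $s$, so its complement $D$ is a left divisor of $\Delta_{1,s}\deeq t_1\cdots t_m$; hence $D$ is a pure $t$-word of the form $t_1\cdots t_k$. Moreover, taking $Y\deeq\varepsilon$ in the equivalence above and using $t_i\not|_l w$ gives $D\not|_l w_N(\underline t)w_N(\underline u)$, and since $D$ is a $t$-word while the product is a $t$-word followed by a $u$-word, this forces $D\not|_l w_N(\underline t)$. The remaining task is therefore purely combinatorial: find the $|_l$-minimal $Y$ for which the prefix $D=t_1\cdots t_k$ left-divides $w_N(\underline t)\,w_N(\underline u)\,Y$, splitting according to the relative position of the two $t$-prefixes $D$ and $w_N(\underline t)$.

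In the case $w_N(\underline t)\,|_l\,D$ we have $w_N(\underline t)\deeq t_1\cdots t_j$ and $D\deeq w_N(\underline t)\cdot D'$ with $D'\deeq\Delta_{1,\widetilde C_1(w_{N-1}(\underline t)s)\cdot w_N(\underline t)}$ a $t$-word. Because $t$'s and $u$'s commute, $w_N(\underline t)w_N(\underline u)D'\deeq w_N(\underline t)D'\,w_N(\underline u)\deeq D\cdot w_N(\underline u)$, so $Y\deeq D'$ works; and any $Y$ making $D$ a left divisor must supply exactly the missing $t$-tail $D'$, whence $D'$ is the unique minimal choice. This yields the singleton $\{\,w\cdot\Delta_{1,\widetilde C_1(w_{N-1}(\underline t)s)\cdot w_N(\underline t)}\,\}$, matching the first branch.

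In the case $w_N(\underline t)\not|_l D$ the two $t$-prefixes are $|_l$-incomparable, so no reconciliation is possible inside the free $t$-letters; a common multiple can only be formed by routing a full $t_1\cdots t_m$ through the cyclic relation, i.e. through an occurrence of $s$ that $Y$ must provide. I make this precise by peeling the letters of $D$ against $w_N(\underline t)$ using the reduction Lemma 4.1(v): each forced step emits a factor from $F^{+}_{2,\mathrm{rm}}$ and the process terminates in $\Delta_{1,C_1(w_N(\underline t))}$, giving exactly the family $\{\,w\cdot w'(\underline u)\cdot\Delta_{1,C_1(w_N(\underline t))}\mid w'(\underline u)\in F^{+}_{2,\mathrm{rm}}\,\}$. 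I expect this second case to be the main obstacle: one must establish simultaneously that every listed $Y$ genuinely makes $D$ a left divisor, that the members of the family are pairwise $|_l$-incomparable and each $|_l$-minimal, and that the list is complete. The delicate part is the last two points, since they are what force the answer to be an \emph{infinite} set rather than a singleton, and this infinitude is precisely the local manifestation of the failure of the LCM condition; controlling it rigorously relies on the exact bookkeeping of the $F^{+}_{2,\mathrm{rm}}$-factors furnished by Lemma 4.1(v) together with the uniqueness of the maximal consecutive right-divisor $C_1(w_N(\underline t))$.
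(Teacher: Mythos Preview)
Your proposal is correct and follows essentially the same route as the paper's own proof: factor $w$ as $P\cdot w_N(\underline{t})\,w_N(\underline{u})$ with $P$ ending in $s$, apply Lemma~4.7 to the equation $t_i\cdot X \deeq P\cdot\bigl(w_N(\underline{t})\,w_N(\underline{u})\,Y\bigr)$ to reduce the problem to the divisibility $D\,|_l\,w_N(\underline{t})\,w_N(\underline{u})\,Y$ with $D=\Delta_{1,\widetilde C_1(w_{N-1}(\underline{t})\cdot s)}$, and then split into the two cases according to whether $w_N(\underline{t})\,|_l\,D$, resolving each via Lemma~4.1. Your write-up is in fact more explicit than the paper's (you identify $D$ concretely as $t_1\cdots t_k$ and record $D\not|_l w_N(\underline{t})$), and the points you flag as ``the main obstacle'' in the second branch are handled in the paper with no more detail than you give---the paper simply invokes Lemma~4.1 and states the outcome.
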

\begin{proof}
We put $w' \deeq w_{0}(\underline{t})\cdot w_{0}(\underline{u}) \cdot s \cdots s \cdot w_{N-1}(\underline{t})\cdot w_{N-1}(\underline{u})\cdot s$. We consider an equation $t_{i} \cdot X \deeq w' \cdot Y$. Due to the Lemma 4.7, we say there exists a word $Z_1$ such that 
\[
Y \deeq \Delta_{1, \widetilde{C}_{1}(w_{N-1}(\underline{t})\cdot s)} \cdot Z_1.
\] 
We consider an equation 
\[
w_{N}(\underline{t})\cdot w_{N}(\underline{u})\cdot Y' \deeq \Delta_{1, \widetilde{C}_{1}(w_{N-1}(\underline{t})\cdot s)} \cdot Z_1.
\]
If $w_{N}(\underline{t})$ devides the element $\Delta_{1, \widetilde{C}_{1}(w_{N-1}(\underline{t})\cdot s)}$ from the left, then due to the Lemma 4.1 we say that there exists a word $Z_2$ such that
\[
Y' \deeq  \Delta_{1, \widetilde{C}_{1}(w_{N-1}(\underline{t})\cdot s)\cdot w_{N}(\underline{t})} \cdot Z_2.
\]
If $w_{N}(\underline{t})$ does not devide the element $\Delta_{1, \widetilde{C}_{1}(w_{N-1}(\underline{t})\cdot s)}$, then due to the Lemma 4.1 we say that there exist a word $w'_{0}(\underline{u})$ in $F^{+}_{2, \mathrm{rm}}$ and a word $Z_3$ such that
\[
Y' \deeq w'(\underline{u})\cdot \Delta_{1, C_{1}(w_{N}(\underline{t}))}.
\]
This completes the proof.
\end{proof}
\begin{lemma}
{\it  Let $u_{i}$ be a letter in $L_2$, and let $w$ be an element in $\mathcal{W}_{m, n}$ that contains at least one letter $s$ and satisfies $u_{i} \not|_l w$.We suppose that the element $w$ has the normal form: $w_{0}(\underline{t})\cdot w_{0}(\underline{u}) \cdot s \cdot w_{1}(\underline{t})\cdot w_{1}(\underline{u}) \cdot s \cdots s \cdot w_{N}(\underline{t})\cdot w_{N}(\underline{u})$.\\
Then, $\mathrm{mcm}_{r}(\{ u_{i},  w \})$
\[
 = \left \{ \begin{array}{ll}
       \{ w \cdot  \Delta_{2, \widetilde{C}_{2}(w_{N-1}(\underline{u})\cdot s)\cdot w_{N}(\underline{u})}  \} & \mbox{if $w_{N}(\underline{u})\, |_l \,\Delta_{2, \widetilde{C}_{2}(w_{N-1}(\underline{u})\cdot s)}$} \\
       \{ w \cdot w'(\underline{t})\cdot \Delta_{2, C_{2}(w_{N}(\underline{u}))} \mid w'(\underline{t})\in F^{+}_{1, \mathrm{rm}} \}   & \mbox{if $w_{N}(\underline{u})\, \not|_l \,\Delta_{2, \widetilde{C}_{2}(w_{N-1}(\underline{u})\cdot s)}$}
          \end{array}
\right.
\]
}
\end{lemma}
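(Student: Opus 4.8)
The plan is to deduce Lemma 4.9 from Lemma 4.8 by exploiting the symmetry of the presentation that interchanges the two families of generators. Concretely, I would introduce the monoid isomorphism
\[
\psi : G^{+}_{m, n} \longrightarrow G^{+}_{n, m}, \qquad \psi(s) = s,\ \psi(t_i) = u_i,\ \psi(u_j) = t_j,
\]
which is well defined because the defining relations $\left[ s, t_1, \ldots, t_m \right]$, $\left[ s, u_1, \ldots, u_n \right]$, $\left[ t_i, u_j \right]$ of $G^{+}_{m, n}$ are carried bijectively onto the defining relations $\left[ s, u_1, \ldots, u_m \right]$, $\left[ s, t_1, \ldots, t_n \right]$, $\left[ t_j, u_i \right]$ of $G^{+}_{n, m}$; note that Lemma 4.8 holds for $G^{+}_{n, m}$ as well, since it is asserted for every admissible pair of parameters. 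Being an isomorphism, $\psi$ preserves left divisibility, so $\psi\big(\mathrm{mcm}_{r}(\{ u_i, w \})\big) = \mathrm{mcm}_{r}(\{ t_i, \psi(w) \})$, with $t_i \in L_1$ of $G^{+}_{n, m}$ and the hypothesis $u_i \not|_l w$ translating to $t_i \not|_l \psi(w)$. Moreover $\psi(\Delta_1) = \Delta_2$ and $\psi(\Delta_2) = \Delta_1$, so $\psi$ maps $\mathcal{W}_{m, n}$ onto $\mathcal{W}_{n, m}$ and respects the requirement that $w$ contain at least one letter $s$.

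The key bookkeeping step is to identify the normal form of $\psi(w)$. Applying $\psi$ to the normal form of $w$ gives $\psi(w) = \psi(w_{0}(\underline{t})) \psi(w_{0}(\underline{u})) \cdot s \cdots s \cdot \psi(w_{N}(\underline{t})) \psi(w_{N}(\underline{u}))$, in which every $s$-block now lists a $\underline{u}$-word before a $\underline{t}$-word. Using the commutation relations $\left[ t_i, u_j \right]$ I would swap the two subwords inside each block; by the uniqueness in Proposition 4.5 the result is exactly the normal form of $\psi(w)$, whose $k$-th $\underline{t}$-block is $\psi(w_{k}(\underline{u}))$ and whose $k$-th $\underline{u}$-block is $\psi(w_{k}(\underline{t}))$. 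Feeding this into Lemma 4.8 and applying $\psi^{-1}$, the two cases transform term by term: the dichotomy $\psi(w_{N}(\underline{u})) \,|_l\, \Delta_{1, \widetilde{C}_{1}(\psi(w_{N-1}(\underline{u}))\cdot s)}$ becomes $w_{N}(\underline{u}) \,|_l\, \Delta_{2, \widetilde{C}_{2}(w_{N-1}(\underline{u})\cdot s)}$, the element $\Delta_{1, C_{1}(\psi(w_{N}(\underline{u})))}$ becomes $\Delta_{2, C_{2}(w_{N}(\underline{u}))}$, and the free parameter $w'(\underline{u}) \in F^{+}_{2, \mathrm{rm}}$ becomes $w'(\underline{t}) \in F^{+}_{1, \mathrm{rm}}$, producing precisely the two cases asserted in the lemma.

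I expect the only delicate point to be this transport of the normal form through $\psi$: one must check that the intra-block commutations are legitimate and that the decorations $C$, $R$, $\widetilde{C}$ together with the quotient operators correspond under the swap, i.e. that $\psi(\widetilde{C}_{1}(v\cdot s)) = \widetilde{C}_{2}(\psi(v)\cdot s)$ and $\psi(\Delta_{1, v}) = \Delta_{2, \psi(v)}$, which follow from the definitions. Alternatively, one could give a direct proof mirroring that of Lemma 4.8, first establishing the $u_i$-analogues of Lemmas 4.6 and 4.7; on that route the genuine asymmetry, and hence the main obstacle, is that the normal form records every $\underline{t}$-word to the \emph{left} of the corresponding $\underline{u}$-word, so $u_i$ must first be commuted past the leading $\underline{t}$-blocks before Lemma 4.1 (vi) can be applied, whereas in the $t_i$ case of Lemma 4.8 no such preliminary commutation is required. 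The symmetry argument above sidesteps this asymmetry entirely.
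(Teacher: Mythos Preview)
The paper gives no explicit proof of Lemma 4.9: it is stated immediately after the proof of Lemma 4.8 with the tacit understanding that it follows by the evident $t\leftrightarrow u$ symmetry of the presentation. Your proposal makes that symmetry precise via the isomorphism $\psi:G^{+}_{m,n}\to G^{+}_{n,m}$ and carefully checks the transport of the normal form, of $C_i$, $\widetilde{C}_i$, and of the quotient operators $\Delta_{i,v}$; this is a correct and complete way to carry out what the paper only gestures at, and your alternative of redoing the proof of Lemma 4.8 with Lemma 4.1\,(iv),(vi) in place of (iii),(v) is exactly the other natural route.
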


\begin{lemma}
{\it  Let $w$ be an element in $\mathcal{W}_{m, n}$ that satisfies $s \not|_l w$. We suppose that the element $w$ has the normal form: $w_{0}(\underline{t})\cdot w_{0}(\underline{u}) \cdot s \cdot w_{1}(\underline{t})\cdot w_{1}(\underline{u}) \cdot s \cdots s \cdot w_{N}(\underline{t})\cdot w_{N}(\underline{u})$.\\
Then, $\mathrm{mcm}_{r}(\{ s,  w\cdot s \})$
\[
 = \left \{ \begin{array}{lll}
 \{ w \cdot s \cdot \Delta_{1, \widetilde{C}_{1}(w_{N}(\underline{t})\cdot s)} \} & \mbox{if $w_{0}(\underline{t})\not=\varepsilon ,\, w_{0}(\underline{u})=\varepsilon$}  \\
\{ w \cdot s\cdot \Delta_{2, \widetilde{C}_{2}(w_{N}(\underline{u})\cdot s)} \}  & \mbox{if  $w_{0}(\underline{t})=\varepsilon ,\, w_{0}(\underline{u})\not=\varepsilon$} \\
       \{ w \cdot  s\cdot \Delta_{1, \widetilde{C}_{1}(w_{N}(\underline{t})\cdot s)} \cdot \Delta_{2, \widetilde{C}_{2}(w_{N}(\underline{u})\cdot s)} \} & \mbox{if $w_{0}(\underline{t}) \not=\varepsilon ,\, w_{0}(\underline{u}) \not=\varepsilon$} 
          \end{array}
\right.
\]
}
\end{lemma}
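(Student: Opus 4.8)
The plan is to reduce the computation of $\mathrm{mcm}_{r}(\{s,\,w\cdot s\})$ to solving the word equation $s\cdot X \deeq w\cdot s\cdot Y$ for the left-minimal $Y$. Every common right-multiple $C$ of $s$ and $w\cdot s$ is left-divisible by $w\cdot s$, so $C \deeq w s\cdot Y$, and the requirement $s\,|_l\,C$ becomes $s\,|_l\,w s Y$; since $G^{+}_{m,n}$ is left cancellative, the left-minimal such $Y$ is unique, which is exactly why the answer is a singleton in each case. The hypothesis $s \not|_l w$ forces the leading block $w_{0}(\underline{t})\,w_{0}(\underline{u})$ of the normal form of Proposition 4.5 to be non-trivial, and the three cases of the statement are precisely the three possibilities for this leading block: having only a $\underline{t}$-part, only a $\underline{u}$-part, or both.

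First I would prove the forward inclusion, that the displayed element is a genuine common multiple, i.e. that $s$ divides it from the left. The key identity is the cyclic relation $[s,t_1,\ldots,t_m]$, which gives $\widetilde{C}_{1}(w_{N}(\underline{t})\cdot s)\cdot \Delta_{1,\widetilde{C}_{1}(w_{N}(\underline{t})\cdot s)} \deeq \Delta_{1} \deeq s\,t_1\cdots t_m \deeq t_1\cdots t_m\, s$, together with its $\Delta_{2}$-analogue. Appending $\Delta_{1,\widetilde{C}_{1}(w_{N}(\underline{t})\cdot s)}$ to $w s$ thus completes the trailing block $w_{N}(\underline{t})\cdot s$ into a copy of $\Delta_{1}$; since $\Delta_{1}$ carries an internal $s$ with $s\Delta_{1}\deeq\Delta_{1}s$, this $s$ can be transported leftward through the alternating $\underline{t}/\underline{u}/s$ blocks, using the commutation $[t_i,u_j]$ to slide $\underline{t}$- and $\underline{u}$-letters past one another, until a final rotation of the leading block ejects it at the front. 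In Case 3 the leading block has both a $\underline{t}$- and a $\underline{u}$-part, so two independent completions $\Delta_{1,\widetilde{C}_{1}(w_{N}(\underline{t})\cdot s)}$ and $\Delta_{2,\widetilde{C}_{2}(w_{N}(\underline{u})\cdot s)}$ are needed, and they operate independently precisely because $t$- and $u$-letters commute.

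Next I would establish minimality, namely that any common multiple is a right-multiple of the displayed one. Here I would solve $s\cdot X \deeq w s\cdot Y$ by repeated application of Lemma 4.1, in direct analogy with the proofs of Lemmas 4.7--4.9: one peels off the leading block of $w$ using parts (iii) and (v) (for a $\underline{t}$-block) or (iv) and (vi) (for a $\underline{u}$-block), which pins down the shape of $X$ and forces an $\widetilde{C}$-completion to divide the remainder, and then propagates this constraint through the successive blocks exactly as Lemma 4.8 does through Lemma 4.7. I expect the main obstacle to be the bookkeeping in Case 3, where the two cyclic completions must be coordinated through the commutation relations, together with verifying that the propagation terminates at the trailing block $w_{N}$ --- this is what makes the minimal completion governed by $\widetilde{C}_{1}(w_{N}(\underline{t})\cdot s)$ and $\widetilde{C}_{2}(w_{N}(\underline{u})\cdot s)$, rather than by any earlier block, even though the case distinction itself is dictated by the leading block. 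Left cancellativity then upgrades uniqueness of the minimal $Y$ to the singleton conclusion.
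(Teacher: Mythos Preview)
The paper states this lemma without proof; it follows Lemmas 4.8 and 4.9 (the latter also unproved, being the $u$-mirror of 4.8) and is evidently meant to be established by the same machinery. Your plan---reduce to the equation $s\cdot X \deeq w\, s\cdot Y$ and peel off blocks via repeated application of Lemma 4.1, exactly as in the chain of Lemmas 4.6, 4.7, 4.8---is that machinery, so the approach matches what the paper intends.

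One correction to your framing, however: the sentence ``since $G^{+}_{m,n}$ is left cancellative, the left-minimal such $Y$ is unique, which is exactly why the answer is a singleton'' is not a valid inference. Left cancellativity does not force $\mathrm{mcm}_r$ to be a singleton; the whole thrust of the paper is that $G^{+}_{m,n}$ fails the LCM condition, and indeed Lemmas 4.8 and 4.9 exhibit pairs whose $\mathrm{mcm}_r$ is infinite. That the present $\mathrm{mcm}_r$ happens to be a singleton is a genuine feature of the specific pair $\{s,\,w\cdot s\}$ and must emerge from the Lemma 4.1 computation itself---it cannot be read off from any general principle. Your later paragraphs do commit to carrying out that computation, so the defect is confined to the heuristic opening and does not affect the actual strategy.
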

For an arbitrary element $w$ in $G^+_{m, n}$, we define a non-negative integer
\[
k(w):= \mathrm{max}\{ k \in  \Z_{\ge0} \mid \Delta^{k}\,|_l w \}.
\]
We remark that one can decide the integer $k(w)$ algorithmically. We write 
\[
w \deeq \Delta^{k(w)} \cdot w_{\mathrm{remain}}.
\]
 If $k(w) > 0$, we have $L(w) = L_{0}$. Then, we easily verify the property $(\mathrm{P}(w; \Delta))$ with respect to $w$. From here, we consider the case $k(w) = 0$. For the element $w_{\mathrm{remain}}$ and an index $i \in \{ 1, 2 \}$, we define a non-negative integer
\[
\lambda_{i}(w_{\mathrm{remain}}):= \mathrm{max}\{ k \in  \Z_{\ge0} \mid \Delta_{i}^{k}\,|_l\, w_{\mathrm{remain}} \}.
\]
By definition, we note that $\lambda_{1}(w_{\mathrm{remain}}) \cdot \lambda_{2}(w_{\mathrm{remain}}) =0$. Hence, we consider the following three cases:\par
{\bf $\mathrm{I}$}:  $\lambda_{1}(w_{\mathrm{remain}}) > 0$, $\lambda_{2}(w_{\mathrm{remain}}) = 0$.\\
For the element $w_{\mathrm{remain}}$, we define a non-negative integer
\[
\mu_{1}(w_{\mathrm{remain}}):= \mathrm{max}\{ k \in  \Z_{\ge0} \mid \Delta_{1}^{\lambda_{1}(w_{\mathrm{remain}})} \cdot \Delta^{k}_{1, s} \,|_l\, w_{\mathrm{remain}} \}.
\]
We write
\[
w \deeq w_{\mathrm{remain}} \deeq \Delta_{1}^{\lambda_{1}(w_{\mathrm{remain}})} \cdot \Delta_{1, s}^{\mu_{1}(w_{\mathrm{remain}})} \cdot w'_{\mathrm{remain}}.
\]
From the Proposition 4.4, we say that the element $w'_{\mathrm{remain}}$ belongs to the set $\mathcal{W}_{m, n}$. We take an element $A$ in $\mathrm{Trans}(w)$. By definition, there exists an element $Q$ in $G^{+}_{m, n}$ such that an equation $AQ \deeq wA$ holds. Since the element $A$ is not $\varepsilon$, there exists a letter $l_0$ in $L_0$ such that the letter $l_0$ devides $A$ from the left. We write $A \deeq l_{0} \cdot A'$. Since $L(w) = \{ s, t_1, \ldots, t_{m} \}$, we have $\{ s, t_1, \ldots, t_{m} \} \subseteq \mathrm{Trans^{min}}(w)$. We should consider the case $l_0 = u_{\tau}$ ($u_{\tau} \in L_{2}$). We consider the following two cases.\par
{\bf Case 1.}\,The element $w'_{\mathrm{remain}}$ does not contain the letter $s$.\\
We can write $w'_{\mathrm{remain}}$ by $w_{0}(\underline{t})\cdot w_{0}(\underline{u})$. We consider an equation 
\[
u_{\tau} \cdot A' \cdot Q \deeq \Delta_{1}^{\lambda_{1}(w_{\mathrm{remain}})} \cdot \Delta_{1, s}^{\mu_{1}(w_{\mathrm{remain}})} \cdot w_{0}(\underline{t})\cdot w_{0}(\underline{u}) \cdot A.
\]
From the Lemma 4.1, there exists a word $Z_1$ such that
\[
w_{0}(\underline{u}) \cdot A \deeq \Delta_{2, s} \cdot Z_1.
\]
Due to the Lemma 4.1, there exist $w'_{0}(\underline{t})$ in $F^{+}_{1, \mathrm{rm}}$ and a word $Z_2$ such that
\[
A \deeq w'_{0}(\underline{t}) \cdot \Delta_{2, C_{2}(w_{0}(\underline{u}))} \cdot Z_2.
\]
Since $\{ s, t_1, \ldots, t_{m} \} \subseteq \mathrm{Trans^{min}}(w)$, we should consider $w'_{0}(\underline{t}) = \varepsilon$. We easily show that $\{ s, t_1, \ldots, t_{m},  \Delta_{2, C_{2}(w_{0}(\underline{u}))}\} = \mathrm{Trans^{min}}(w)$. We have verified the property $(\mathrm{P}(w; \Delta))$.
\par
{\bf Case 2.}\,The element $w'_{\mathrm{remain}}$ contains the letter $s$.\\
We write $w'_{\mathrm{remain}} \deeq w_{0}(\underline{t})\cdot w_{0}(\underline{u}) \cdot s \cdot w_{1}(\underline{t})\cdot w_{1}(\underline{u}) \cdot s \cdots s \cdot w_{N}(\underline{t})\cdot w_{N}(\underline{u})$. We consider an equation 
\[
u_{\tau} \cdot A' \cdot Q \deeq \Delta_{1}^{\lambda_{1}(w_{\mathrm{remain}})} \cdot \Delta_{1, s}^{\mu_{1}(w_{\mathrm{remain}})} \cdot w'_{\mathrm{remain}}\cdot A.
\]
From the Lemma 4.1, there exists a word $Z_1$ such that
\[
w'_{\mathrm{remain}}\cdot A \deeq \Delta_{2, s} \cdot Z_1.
\]
Due to the Lemma 4.9, we consider the following two cases.\par
{\bf Case 2 -- 1.}\,$w_{N}(\underline{u})\, |_l \,\Delta_{2, \widetilde{C}_{2}(w_{N-1}(\underline{u})\cdot s)}$\\
We say that there exists a word $Z_{1}$ such that 
\[
A \deeq \Delta_{2, \widetilde{C}_{2}(w_{N-1}(\underline{u})\cdot s)\cdot w_{N}(\underline{u})} \cdot Z_{1}.
\]
We say that the element $\Delta_{2, \widetilde{C}_{2}(w_{N-1}(\underline{u})\cdot s)\cdot w_{N}(\underline{u})}$ belongs to the set $\mathrm{Trans}(w)$. Since the element $\Delta_{2, \widetilde{C}_{2}(w_{N-1}(\underline{u})\cdot s)\cdot w_{N}(\underline{u})}$ devides $\Delta$ from the left, we have verified the property $(\mathrm{P}(w; \Delta))$. \par
{\bf Case 2 -- 2.}\,$w_{N}(\underline{u})\, \not|_l \,\Delta_{2, \widetilde{C}_{2}(w_{N-1}(\underline{u})\cdot s)}$\\
We say that there exist $w'_{0}(\underline{t})$ in $F^{+}_{1, \mathrm{rm}}$ and a word $Z_1$ such that
\[
A \deeq w'_{0}(\underline{t}) \cdot \Delta_{2, C_{2}(w_{N}(\underline{u}))}\cdot Z_{1}.
\]
Since $\{ s, t_1, \ldots, t_{m} \} \subseteq \mathrm{Trans^{min}}(w)$, we should consider $w'_{0}(\underline{t}) = \varepsilon$. We say that the element $\Delta_{2, C_{2}(w_{N}(\underline{u}))}$ belongs to the set $\mathrm{Trans}(w)$. Since the element $\Delta_{2, C_{2}(w_{N}(\underline{u}))}$ devides $\Delta$ from the left, we have verified the property $(\mathrm{P}(w; \Delta))$. \par
{\bf $\mathrm{II}$}:  $\lambda_{1}(w_{\mathrm{remain}}) = 0$, $\lambda_{2}(w_{\mathrm{remain}}) > 0$.\\
We can verify the property $(\mathrm{P}(w; \Delta))$ in the same way as the case $\mathrm{I}$.\par
{\bf $\mathrm{III}$}: $\lambda_{1}(w_{\mathrm{remain}}) = \lambda_{2}(w_{\mathrm{remain}}) = 0$.\\
For the element $w_{\mathrm{remain}}$, we define a non-negative integer
\[
\mu_{1}(w_{\mathrm{remain}}):= \mathrm{max}\{ k \in  \Z_{\ge0} \mid \Delta^{k}_{1, s} \,|_l\, w_{\mathrm{remain}} \}.
\]
Next, for the element $w_{\mathrm{remain}}$, we define a non-negative integer
\[
\mu_{2}(w_{\mathrm{remain}}):= \mathrm{max}\{ k \in  \Z_{\ge0} \mid \Delta^{\mu_{1}(w_{\mathrm{remain}})}_{1, s} \cdot \Delta^{k}_{2, s}  \,|_l\, w_{\mathrm{remain}} \}.
\]
We write
\[
w \deeq w_{\mathrm{remain}} \deeq \Delta_{1, s}^{\mu_{1}(w_{\mathrm{remain}})} \cdot \Delta_{2, s}^{\mu_{2}(w_{\mathrm{remain}})} \cdot w'_{\mathrm{remain}}.
\]
From the Proposition 4.4, we say that the element $w'_{\mathrm{remain}}$ belongs to the set $\mathcal{W}_{m, n}$. We consider the following six cases.\par
{\bf Case 1.}\, $L(w) = \{ t_1, \ldots, t_{m}, u_{\tau} \}$. ($u_{\tau} \in L_{2}$)\\
The element $w'_{\mathrm{remain}}$ contains the letter $s$. We remark that the element \\$\Delta_{2, s}^{\mu_{2}(w_{\mathrm{remain}})} \cdot w'_{\mathrm{remain}}$ belongs to the set $\mathcal{W}_{m, n}$. The element $\Delta_{2, s}^{\mu_{2}(w_{\mathrm{remain}})} \cdot w'_{\mathrm{remain}}$ can be written by $w_{0}(\underline{u}) \cdot s \cdot w_{1}(\underline{t})\cdot w_{1}(\underline{u}) \cdot s \cdots s \cdot w_{N}(\underline{t})\cdot w_{N}(\underline{u})$ ($w_{0}(\underline{u}) \not=\varepsilon$). We take an element $A$ in $\mathrm{Trans}(w)$. By definition, there exists an element $Q$ in $G^{+}_{m, n}$ such that an equation $AQ \deeq wA$ holds. Since the element $A$ is not $\varepsilon$, there exists a letter $l_0$ in $L_0$ such that the letter $l_0$ devides $A$ from the left. We write $A \deeq l_{0} \cdot A'$. Since $L(w) = \{ t_1, \ldots, t_{m}, u_{\tau} \}$, we have $\{ t_1, \ldots, t_{m}, u_{\tau} \} \subseteq \mathrm{Trans^{min}}(w)$. We condider the following two cases.\par
{\bf Case 1 -- 1.}\,\,$l_{0} = s$.\\
We consider an equation
\[
s \cdot A' \cdot Q \deeq  \Delta_{1, s}^{\mu_{1}(w_{\mathrm{remain}})}\cdot \Delta_{2, s}^{\mu_{2}(w_{\mathrm{remain}})} \cdot w'_{\mathrm{remain}}\cdot s\cdot A'.
\]
Due to the Lemma 4.10, we say that there exists a word $Z_1$ such that
\[
A \deeq s \cdot \Delta_{2, \widetilde{C}_{2}(w_{N}(\underline{u})\cdot s)}\cdot Z_1.
\]
We say that the element $s \cdot \Delta_{2, \widetilde{C}_{2}(w_{N}(\underline{u})\cdot s)}$ belongs to the set $\mathrm{Trans}(w)$. Since it devides $\Delta$ from the left, we have verified the property $(\mathrm{P}(w; \Delta))$.\par
{\bf Case 1 -- 2.}\,\,$L_{0} = u_{\sigma}$ ($u_{\sigma} \not= u_{\tau}$).\\
We consider an equation 
\[
u_{\sigma} \cdot A' \cdot Q \deeq \Delta_{1, s}^{\mu_{1}(w_{\mathrm{remain}})} \cdot \Delta_{2, s}^{\mu_{2}(w_{\mathrm{remain}})}\cdot w'_{\mathrm{remain}}\cdot A.
\]
Due to the Lemma 4.9, we consider the following two cases.\par
{\bf Case 1 -- 2 -- 1.}\,\,$w_{N}(\underline{u})\, |_l \,\Delta_{2, \widetilde{C}_{2}(w_{N-1}(\underline{u})\cdot s)}$\\
We say that there exists a word $Z_{1}$ such that 
\[
A \deeq \Delta_{2, \widetilde{C}_{2}(w_{N-1}(\underline{u})\cdot s)\cdot w_{N}(\underline{u})} \cdot Z_{1}.
\]
We say that the element $\Delta_{2, \widetilde{C}_{2}(w_{N-1}(\underline{u})\cdot s)\cdot w_{N}(\underline{u})}$ belongs to the set $\mathrm{Trans}(w)$. Since the element $\Delta_{2, \widetilde{C}_{2}(w_{N-1}(\underline{u})\cdot s)\cdot w_{N}(\underline{u})}$ devides $\Delta$ from the left, we have verified the property $(\mathrm{P}(w; \Delta))$. \par
{\bf Case 1 -- 2 -- 2.}\,\,$w_{N}(\underline{u})\, \not|_l \,\Delta_{2, \widetilde{C}_{2}(w_{N-1}(\underline{u})\cdot s)}$\\
We say that there exist $w'_{0}(\underline{t})$ in $F^{+}_{1, \mathrm{rm}}$ and a word $Z_1$ such that
\[
A \deeq w'_{0}(\underline{t}) \cdot \Delta_{2, C_{2}(w_{N}(\underline{u}))}\cdot Z_{1}.
\]
Since $\{ t_1, \ldots, t_{m}, u_{\tau} \} \subseteq \mathrm{Trans^{min}}(w)$, we should consider $w'_{0}(\underline{t}) = \varepsilon$. We say that the element $\Delta_{2, C_{2}(w_{N}(\underline{u}))}$ belongs to the set $\mathrm{Trans}(w)$. Since the element $\Delta_{2, C_{2}(w_{N}(\underline{u}))}$ devides $\Delta$ from the left, we have verified the property $(\mathrm{P}(w; \Delta))$. \par
{\bf Case 2.}\, $L(w) = \{ u_1, \ldots, u_{n}, t_{\tau} \}$. ($t_{\tau} \in L_{1}$)\\
We can verify the property $(\mathrm{P}(w; \Delta))$ in the same way as the case 1.\par
{\bf Case 3.}\, $L(w) = \{ t_{\tau}, u_{\sigma} \}$. ($t_{\tau} \in L_{1}, u_{\sigma} \in L_2$)\\
We remark that the element $\Delta_{1, s}^{\mu_{1}(w_{\mathrm{remain}})}\cdot \Delta_{2, s}^{\mu_{2}(w_{\mathrm{remain}})} \cdot w'_{\mathrm{remain}}$ belongs to the set $\mathcal{W}_{m, n}$. It can be written by $w_{0}(\underline{t}) \cdot w_{0}(\underline{u}) \cdot s \cdot w_{1}(\underline{t})\cdot w_{1}(\underline{u}) \cdot s \cdots s \cdot w_{N}(\underline{t})\cdot w_{N}(\underline{u})$ ($w_{0}(\underline{t})\not=\varepsilon, w_{0}(\underline{u}) \not=\varepsilon$). We take an element $A$ in $\mathrm{Trans}(w)$. By definition, there exists an element $Q$ in $G^{+}_{m, n}$ such that an equation $AQ \deeq wA$ holds. Since the element $A$ is not $\varepsilon$, there exists a letter $l_0$ in $L_0$ such that the letter $l_0$ devides $A$ from the left. We write $A \deeq l_{0} \cdot A'$. Since $L(w) = \{ t_{\tau}, u_{\sigma} \}$, we have $\{ t_{\tau}, u_{\sigma} \} \subseteq \mathrm{Trans^{min}}(w)$. We condider the following three cases.\par
{\bf Case 3 -- 1.}\,\,$l_{0} = s$.\\
We consider an equation
\[
s \cdot A' \cdot Q \deeq  w_{\mathrm{remain}}\cdot s\cdot A'.
\]
Due to the Lemma 4.10, we say that there exists a word $Z_1$ such that
\[
A \deeq s \cdot \Delta_{1, \widetilde{C}_{1}(w_{N}(\underline{t})\cdot s)} \cdot \Delta_{2, \widetilde{C}_{2}(w_{N}(\underline{u})\cdot s)}\cdot Z_1.
\]
We say that the element $s \cdot \Delta_{1, \widetilde{C}_{1}(w_{N}(\underline{t})\cdot s)} \cdot \Delta_{2, \widetilde{C}_{2}(w_{N}(\underline{u})\cdot s)}$ belongs to the set $\mathrm{Trans}(w)$. Since it devides $\Delta$ from the left, we have verified the property $(\mathrm{P}(w; \Delta))$.\par
{\bf Case 3 -- 2.}\,\,$l_{0} = t_{\tau'}$ ($t_{\tau'} \not= t_{\tau}$).\\
We consider an equation 
\[
t_{\tau'} \cdot A' \cdot Q \deeq w_{\mathrm{remain}}\cdot A.
\]
Due to the Lemma 4.8, we consider the following two cases.\par
{\bf Case 3 -- 2 -- 1.}\,$w_{N}(\underline{t})\, |_l \,\Delta_{1, \widetilde{C}_{1}(w_{N-1}(\underline{t})\cdot s)}$\\
We say that there exists a word $Z_{1}$ such that 
\[
A \deeq \Delta_{1, \widetilde{C}_{1}(w_{N-1}(\underline{t})\cdot s)\cdot w_{N}(\underline{t})} \cdot Z_{1}.
\]
We say that the element $\Delta_{1, \widetilde{C}_{1}(w_{N-1}(\underline{t})\cdot s)\cdot w_{N}(\underline{t})}$ belongs to the set $\mathrm{Trans}(w)$. Since it devides $\Delta$ from the left, we have verified the property $(\mathrm{P}(w; \Delta))$. \par
{\bf Case 3 -- 2 -- 2.}\,$w_{N}(\underline{t})\, \not|_l \,\Delta_{1, \widetilde{C}_{1}(w_{N-1}(\underline{t})\cdot s)}$\\
We say that there exist $w'_{0}(\underline{u})$ in $F^{+}_{2, \mathrm{rm}}$ and a word $Z_1$ such that
\[
A \deeq w'_{0}(\underline{u}) \cdot \Delta_{1, C_{1}(w_{N}(\underline{t}))}\cdot Z_{1}.
\]
Then, we consider an equation 
\[
w'_{0}(\underline{u}) \cdot \Delta_{1, C_{1}(w_{N}(\underline{t}))}\cdot Z_{1} \cdot Q \deeq w_{\mathrm{remain}}\cdot w'_{0}(\underline{u}) \cdot \Delta_{1, C_{1}(w_{N}(\underline{t}))}\cdot Z_{1}.
\]
We have to consider the following two cases. \par
{\bf Case 3 - 2 - 2 - 1.}\,$w'_{0}(\underline{u}) =\varepsilon$.\\
Then, we have
\[
\Delta_{1, C_{1}(w_{N}(\underline{t}))}\cdot Z_{1} \cdot Q \deeq w_{\mathrm{remain}}\cdot \Delta_{1, C_{1}(w_{N}(\underline{t}))}\cdot Z_{1}\,\,\,\,\,\,\,\,\,\,\,\,\,\,\,\,\,\,\,\,\,\,\,\,\,\,\,\,\,\,\,\,\,\,\,\,\,\,\,\,\,\,\,\,\,\,\,\,\,\,\,\,\,\,\,\,\,\,\,\,\,\,\,\,\,\,\,\,\,\,\,\,\,
\]
\[
\deeq w_{0}(\underline{t}) \cdot w_{0}(\underline{u}) \cdot s \cdot w_{1}(\underline{t})\cdot w_{1}(\underline{u}) \cdot s \cdots s \cdot w_{N}(\underline{t})\cdot w_{N}(\underline{u})\cdot \Delta_{1, C_{1}(w_{N}(\underline{t}))}\cdot Z_{1}\,\,\,\,\,\,\,\,
\]
\[
\deeq \Delta_{1, s} \cdot w_{0}(\underline{u}) \cdot s \cdot w_{0}(\underline{t})\cdot w_{1}(\underline{u}) \cdot s \cdots s \cdot w_{N-1}(\underline{t})\cdot w_{N}(\underline{u})\cdot s \cdot R_{1}(w_{N}(\underline{t}))\cdot Z_{1}.
\]
Since the element $w_{0}(\underline{u})$ is not $\varepsilon$, due to the Lemma 4.9, we say that there exists a word $Z_2$ such that
\[
Z_1 \deeq  \Delta_{2, \widetilde{C}_{2}(w_{N}(\underline{u})\cdot s)} \cdot Z_2.
\]
We say that the element $\Delta_{1, C_{1}(w_{N}(\underline{t}))} \cdot \Delta_{2, \widetilde{C}_{2}(w_{N}(\underline{u})\cdot s)}$ belongs to the set $\mathrm{Trans}(w)$. Since it devides $\Delta$ from the left, we have verified the property $(\mathrm{P}(w; \Delta))$.
\par
{\bf Case 3 - 2 - 2 - 2.}\,$w'_{0}(\underline{u}) \not=\varepsilon$.\\
Due to the Lemma 4.9, there exists a word $Z_2$ such that 
\[
Z_1 \deeq \Delta_{2,  \widetilde{C}_{2}(w_{N}(\underline{u})\cdot w'_{0}(\underline{u})\cdot s)}\cdot Z_2.
\]
If $\widetilde{C}_{2}(w_{N}(\underline{u})\cdot w'_{0}(\underline{u})\cdot s) \deeq \widetilde{C}_{2}(w'_{0}(\underline{u})\cdot s)$, then we have
\[
w'_{0}(\underline{u})\cdot \Delta_{1, C_{1}(w_{N}(\underline{t}))}\cdot \Delta_{2, \widetilde{C}_{2}(w'_{0}(\underline{u})\cdot s)}
\]
\[
\deeq \Delta_{1, C_{1}(w_{N}(\underline{t}))}\cdot \Delta_{2, s} \cdot \widetilde{R}_{2}(w'_{0}(\underline{u})\cdot s).
\]
We say that the element $\Delta_{1, C_{1}(w_{N}(\underline{t}))}\cdot \Delta_{2, s}$ belongs to the set $\mathrm{Trans}(w)$. Since it devides $\Delta$ from the left, we have verified the property $(\mathrm{P}(w; \Delta))$. If $\widetilde{C}_{2}(w_{N}(\underline{u})\cdot w'_{0}(\underline{u})\cdot s)$ is not $\widetilde{C}_{2}(w'_{0}(\underline{u})\cdot s)$, then we say that the element $w'_{0}(\underline{u}) \cdot \Delta_{1, C_{1}(w_{N}(\underline{t}))}\cdot \Delta_{2, \widetilde{C}_{2}(w_{N}(\underline{u})\cdot w'_{0}(\underline{u})\cdot s)}$ belongs to the set $\mathrm{Trans}(w)$. Since it devides $\Delta$ from the left, we have verified the property $(\mathrm{P}(w; \Delta))$.\par
{\bf Case 3 -- 3.}\,\,$l_{0} = u_{\sigma'}$ ($u_{\sigma'} \not= u_{\sigma}$).\\
We can verify the property $(\mathrm{P}(w; \Delta))$ in the same way as the case 3 -- 2.\par
{\bf Case 4.}\, $L(w) = \{ t_{\tau} \}$. ($t_{\tau} \in L_{1}$)\\
We remark that the element $w \deeq w_{\mathrm{remain}}$ belongs to the set $\mathcal{W}_{m, n}$. It can be written by $w_{0}(\underline{t}) \cdot  s \cdot w_{1}(\underline{t})\cdot w_{1}(\underline{u}) \cdot s \cdots s \cdot w_{N}(\underline{t})\cdot w_{N}(\underline{u})$ ($w_{0}(\underline{t})\not=\varepsilon$). We take an element $A$ in $\mathrm{Trans}(w)$. By definition, there exists an element $Q$ in $G^{+}_{m, n}$ such that an equation $AQ \deeq wA$ holds. Since the element $A$ is not $\varepsilon$, there exists a letter $l_0$ in $L_0$ such that the letter $l_0$ devides $A$ from the left. We write $A \deeq l_{0} \cdot A'$. We condider the following three cases.\par
{\bf Case 4 -- 1.}\,\,$l_{0} = s$.\\
Due to the Lemma 4.10, we easily verify the property $(\mathrm{P}(w; \Delta))$.\par
{\bf Case 4 -- 2.}\,\,$l_{0} = t_{\tau'}$ ($t_{\tau'} \not= t_{\tau}$).\\
We consider an equation 
\[
t_{\tau'} \cdot A' \cdot Q \deeq w_{\mathrm{remain}}\cdot A.
\]
Due to the Lemma 4.8, we consider the following two cases.\par
{\bf Case 4 -- 2 -- 1.}\,$w_{N}(\underline{t})\, |_l \,\Delta_{1, \widetilde{C}_{1}(w_{N-1}(\underline{t})\cdot s)}$\\
We say that there exists a word $Z_{1}$ such that 
\[
A \deeq \Delta_{1, \widetilde{C}_{1}(w_{N-1}(\underline{t})\cdot s)\cdot w_{N}(\underline{t})} \cdot Z_{1}.
\]
We say that the element $\Delta_{1, \widetilde{C}_{1}(w_{N-1}(\underline{t})\cdot s)\cdot w_{N}(\underline{t})}$ belongs to the set $\mathrm{Trans}(w)$. Since it devides $\Delta$ from the left, we have verified the property $(\mathrm{P}(w; \Delta))$. \par
{\bf Case 4 -- 2 -- 2.}\,$w_{N}(\underline{t})\, \not|_l \,\Delta_{1, \widetilde{C}_{1}(w_{N-1}(\underline{t})\cdot s)}$\\
We say that there exist $w'_{0}(\underline{u})$ in $F^{+}_{2, \mathrm{rm}}$ and a word $Z_1$ such that
\[
A \deeq w'_{0}(\underline{u}) \cdot \Delta_{1, C_{1}(w_{N}(\underline{t}))}\cdot Z_{1}.
\]
If $w'_{0}(\underline{u})$ is $\varepsilon$, due to the assumption that $w_{0}(\underline{u})$ is $\varepsilon$, then we easily show that the element $\Delta_{1, C_{1}(w_{N}(\underline{t}))}$ belongs to the set $\mathrm{Trans}(w)$. Otherwise, we consider an equation 
\[
w'_{0}(\underline{u}) \cdot \Delta_{1, C_{1}(w_{N}(\underline{t}))}\cdot Z_{1} \cdot Q \deeq w_{\mathrm{remain}} \cdot w'_{0}(\underline{u}) \cdot \Delta_{1, C_{1}(w_{N}(\underline{t}))}\cdot Z_{1}.
\]
Due to the Lemma 4.9, there exists a word $Z_2$ such that 
\[
Z_1 \deeq \Delta_{2,  \widetilde{C}_{2}(w_{N}(\underline{u})\cdot w'_{0}(\underline{u})\cdot s)}\cdot Z_2.
\]
We have to consider the following two cases.\par
{\bf Case 4 - 2 - 2 - 1.}\,$\widetilde{C}_{2}(w_{N}(\underline{u})\cdot w'_{0}(\underline{u})\cdot s) \deeq \widetilde{C}_{2}(w'_{0}(\underline{u})\cdot s)$\\
Then, we have
\[
w'_{0}(\underline{u})\cdot \Delta_{1, C_{1}(w_{N}(\underline{t}))}\cdot \Delta_{2, \widetilde{C}_{2}(w'_{0}(\underline{u})\cdot s)}\,\,\,\,
\]
\[
\deeq \Delta_{1, C_{1}(w_{N}(\underline{t}))}\cdot \Delta_{2, s} \cdot \widetilde{R}_{2}(w'_{0}(\underline{u})\cdot s).
\]
We say that the element $\Delta_{1, C_{1}(w_{N}(\underline{t}))}\cdot \Delta_{2, s}$ belongs to the set $\mathrm{Trans}(w)$. Since it devides $\Delta$ from the left, we have verified the property $(\mathrm{P}(w; \Delta))$.\par
{\bf Case 4 - 2 - 2 - 2.}\,$\widetilde{C}_{2}(w_{N}(\underline{u})\cdot w'_{0}(\underline{u})\cdot s)$ is not $\widetilde{C}_{2}(w'_{0}(\underline{u})\cdot s)$\\
Then, we say that the element $w'_{0}(\underline{u}) \cdot \Delta_{1, C_{1}(w_{N}(\underline{t}))}\cdot \Delta_{2, \widetilde{C}_{2}(w_{N}(\underline{u})\cdot w'_{0}(\underline{u})\cdot s)}$ belongs to the set $\mathrm{Trans}(w)$. Since it devides $\Delta$ from the left, we have verified the property $(\mathrm{P}(w; \Delta))$. \par
{\bf Case 4 -- 3.}\,\,$l_{0} = u_{\sigma'}$ ($u_{\sigma'} \not= u_{\sigma}$).\\
We consider an equation 
\[
u_{\sigma'} \cdot A' \cdot Q \deeq w_{\mathrm{remain}}\cdot A.
\]
Due to the Lemma 4.9, we consider the following two cases.\par
{\bf Case 4 -- 3 -- 1.}\,$w_{N}(\underline{u})\, |_l \,\Delta_{2, \widetilde{C}_{2}(w_{N-1}(\underline{u})\cdot s)}$\\
We say that there exists a word $Z_{1}$ such that 
\[
A \deeq \Delta_{2, \widetilde{C}_{2}(w_{N-1}(\underline{u})\cdot s)\cdot w_{N}(\underline{u})} \cdot Z_{1}.
\]
We say that the element $\Delta_{2, \widetilde{C}_{2}(w_{N-1}(\underline{u})\cdot s)\cdot w_{N}(\underline{u})}$ belongs to the set $\mathrm{Trans}(w)$. Since it devides $\Delta$ from the left, we have verified the property $(\mathrm{P}(w; \Delta))$. \par
{\bf Case 4 -- 3 -- 2.}\,$w_{N}(\underline{u})\, \not|_l \,\Delta_{2, \widetilde{C}_{2}(w_{N-1}(\underline{u})\cdot s)}$\\
We say that there exist $w'_{0}(\underline{t})$ in $F^{+}_{1, \mathrm{rm}}$ and a word $Z_1$ such that
\[
A \deeq w'_{0}(\underline{t}) \cdot \Delta_{2, C_{2}(w_{N}(\underline{u}))}\cdot Z_{1}.
\]
Then, we consider an equation 
\[
w'_{0}(\underline{t}) \cdot \Delta_{2, C_{2}(w_{N}(\underline{u}))}\cdot Z_{1} \cdot Q \deeq w_{\mathrm{remain}}\cdot w'_{0}(\underline{t}) \cdot \Delta_{2, C_{2}(w_{N}(\underline{u}))}\cdot Z_{1}.
\]
We have to consider the following two cases. \par
{\bf Case 4 - 3 - 2 - 1.}\,$w'_{0}(\underline{t}) =\varepsilon$.\\
Then, we have
\[
\Delta_{2, C_{2}(w_{N}(\underline{u}))}\cdot Z_{1} \cdot Q \deeq w_{\mathrm{remain}}\cdot \Delta_{2, C_{2}(w_{N}(\underline{u}))}\cdot Z_{1}\,\,\,\,\,\,\,\,\,\,\,\,\,\,\,\,\,\,\,\,\,\,\,\,\,\,\,\,\,\,\,\,\,\,\,\,\,\,\,\,\,\,\,\,\,\,\,\,\,\,\,\,\,\,\,\,\,\,\,\,\,\,\,\,\,\,\,\,\,\,
\]
\[
\deeq w_{0}(\underline{t}) \cdot w_{0}(\underline{u}) \cdot s \cdot w_{1}(\underline{t})\cdot w_{1}(\underline{u}) \cdot s \cdots s \cdot w_{N}(\underline{t})\cdot w_{N}(\underline{u})\cdot \Delta_{2, C_{2}(w_{N}(\underline{u}))}\cdot Z_{1}\,\,\,\,\,\,\,
\]
\[
\deeq \Delta_{2, s} \cdot w_{0}(\underline{t}) \cdot s \cdot w_{0}(\underline{u})\cdot w_{1}(\underline{t}) \cdot s \cdots s \cdot w_{N-1}(\underline{u})\cdot w_{N}(\underline{t})\cdot s \cdot R_{2}(w_{N}(\underline{u}))\cdot Z_{1}.
\]
Since the element $w_{0}(\underline{t})$ is not $\varepsilon$, due to the Lemma 4.8, we say that there exists a word $Z_2$ such that
\[
Z_1 \deeq  \Delta_{1, \widetilde{C}_{1}(w_{N}(\underline{t})\cdot s)} \cdot Z_2.
\]
We say that the element $\Delta_{2, C_{2}(w_{N}(\underline{u}))} \cdot \Delta_{1, \widetilde{C}_{1}(w_{N}(\underline{t})\cdot s)}$ belongs to the set $\mathrm{Trans}(w)$. Since it devides $\Delta$ from the left, we have verified the property $(\mathrm{P}(w; \Delta))$.\par
{\bf Case 4 - 3 - 2 - 2.}\,$w'_{0}(\underline{t})$ is not $\varepsilon$.\\
Due to the Lemma 4.8, there exists a word $Z_2$ such that 
\[
Z_1 \deeq \Delta_{1,  \widetilde{C}_{1}(w_{N}(\underline{t})\cdot w'_{0}(\underline{t})\cdot s)}\cdot Z_2.
\]
If $\widetilde{C}_{1}(w_{N}(\underline{t})\cdot w'_{0}(\underline{t})\cdot s) \deeq \widetilde{C}_{1}(w'_{0}(\underline{t})\cdot s)$, then we have
\[
w'_{0}(\underline{t})\cdot \Delta_{2, C_{2}(w_{N}(\underline{u}))}\cdot \Delta_{1, \widetilde{C}_{1}(w'_{0}(\underline{t})\cdot s)}\,\,\,
\]
\[
\deeq \Delta_{2, C_{2}(w_{N}(\underline{u}))}\cdot \Delta_{1, s} \cdot \widetilde{R}_{1}(w'_{0}(\underline{t})\cdot s).
\]
We easily show that the element $\Delta_{2, C_{2}(w_{N}(\underline{u}))}\cdot \Delta_{1, s}$ belongs to the set $\mathrm{Trans}(w)$. Since it devides $\Delta$ from the left, we have verified the property $(\mathrm{P}(w; \Delta))$. If the element $\widetilde{C}_{1}(w_{N}(\underline{t})\cdot w'_{0}(\underline{t})\cdot s)$ is not equal to $\widetilde{C}_{1}(w'_{0}(\underline{t})\cdot s)$, then we say that the element $w'_{0}(\underline{t}) \cdot \Delta_{2, C_{2}(w_{N}(\underline{u}))}\cdot \Delta_{1, \widetilde{C}_{1}(w_{N}(\underline{t})\cdot w'_{0}(\underline{t})\cdot s)}$ belongs to the set $\mathrm{Trans}(w)$. Since it devides $\Delta$ from the left, we have verified the property $(\mathrm{P}(w; \Delta))$.\par
{\bf Case 5.}\, $L(w) = \{ u_{\sigma} \}$ ($u_{\sigma} \in L_{2}$).\\
We can verify the property $(\mathrm{P}(w; \Delta))$ in the same way as the case 4.\par
{\bf Case 6.}\, $L(w) = \{ s \}$.\\
We remark that the element $w \deeq w_{\mathrm{remain}}$ belongs to the set $\mathcal{W}_{m, n}$. It can be written by $s \cdot w_{1}(\underline{t})\cdot w_{1}(\underline{u}) \cdot s \cdots s \cdot w_{N}(\underline{t})\cdot w_{N}(\underline{u})$. We take an element $A$ in $\mathrm{Trans}(w)$. By definition, there exists an element $Q$ in $G^{+}_{m, n}$ such that an equation $AQ \deeq wA$ holds. Since the element $A$ is not $\varepsilon$, there exists a letter $l_0$ in $L_0$ such that the letter $l_0$ devides $A$ from the left. We write $A \deeq l_{0} \cdot A'$. We have to condider the following two cases.\par
{\bf Case 6 -- 1.}\,\,$l_{0} = \{ t_{\tau} \}$ ($t_{\tau} \in L_{1}$).\\
We consider an equation 
\[
t_{\tau} \cdot A' \cdot Q \deeq w_{\mathrm{remain}}\cdot A.
\]
Due to the Lemma 4.8, we consider the following two cases.\par
{\bf Case 6 -- 1 -- 1.}\,$w_{N}(\underline{t})\, |_l \,\Delta_{1, \widetilde{C}_{1}(w_{N-1}(\underline{t})\cdot s)}$\\
We say that there exists a word $Z_{1}$ such that 
\[
A \deeq \Delta_{1, \widetilde{C}_{1}(w_{N-1}(\underline{t})\cdot s)\cdot w_{N}(\underline{t})} \cdot Z_{1}.
\]
We say that the element $\Delta_{1, \widetilde{C}_{1}(w_{N-1}(\underline{t})\cdot s)\cdot w_{N}(\underline{t})}$ belongs to the set $\mathrm{Trans}(w)$. Since it devides $\Delta$ from the left, we have verified the property $(\mathrm{P}(w; \Delta))$. \par
{\bf Case 6 -- 1 -- 2.}\,$w_{N}(\underline{t})\, \not|_l \,\Delta_{1, \widetilde{C}_{1}(w_{N-1}(\underline{t})\cdot s)}$\\
We say that there exist $w'_{0}(\underline{u})$ in $F^{+}_{2, \mathrm{rm}}$ and a word $Z_1$ such that
\[
A \deeq w'_{0}(\underline{u}) \cdot \Delta_{1, C_{1}(w_{N}(\underline{t}))}\cdot Z_{1}.
\]
Then, we consider an equation 
\[
w'_{0}(\underline{u}) \cdot \Delta_{1, C_{1}(w_{N}(\underline{t}))}\cdot Z_{1} \cdot Q \deeq w_{\mathrm{remain}}\cdot w'_{0}(\underline{u}) \cdot \Delta_{1, C_{1}(w_{N}(\underline{t}))}\cdot Z_{1}.
\]
If the element $w'_{0}(\underline{u})$ is $\varepsilon$, due to the assumption that $w_{0}(\underline{u})$ is $\varepsilon$, then we easily show that the element $\Delta_{1, C_{1}(w_{N}(\underline{t}))}$ belongs to the set $\mathrm{Trans}(w)$. Otherwise, we consider an equation 
\[
w'_{0}(\underline{u}) \cdot \Delta_{1, C_{1}(w_{N}(\underline{t}))}\cdot Z_{1} \cdot Q \deeq w_{\mathrm{remain}} \cdot w'_{0}(\underline{u}) \cdot \Delta_{1, C_{1}(w_{N}(\underline{t}))}\cdot Z_{1}.
\]
Due to the Lemma 4.9, there exists a word $Z_2$ such that 
\[
Z_1 \deeq \Delta_{2,  \widetilde{C}_{2}(w_{N}(\underline{u})\cdot w'_{0}(\underline{u})\cdot s)}\cdot Z_2.
\]
{\bf Case 6 - 1 - 2 - 1.}\,$\widetilde{C}_{2}(w_{N}(\underline{u})\cdot w'_{0}(\underline{u})\cdot s) \deeq \widetilde{C}_{2}(w'_{0}(\underline{u})\cdot s)$\\
Then, we have
\[
w'_{0}(\underline{u})\cdot \Delta_{1, C_{1}(w_{N}(\underline{t}))}\cdot \Delta_{2, \widetilde{C}_{2}(w'_{0}(\underline{u})\cdot s)}\,\,\,
\]
\[
\deeq \Delta_{1, C_{1}(w_{N}(\underline{t}))}\cdot \Delta_{2, s} \cdot \widetilde{R}_{2}(w'_{0}(\underline{u})\cdot s).
\]
We say that the element $\Delta_{1, C_{1}(w_{N}(\underline{t}))}\cdot \Delta_{2, s}$ belongs to the set $\mathrm{Trans}(w)$. Since it devides $\Delta$ from the left, we have verified the property $(\mathrm{P}(w; \Delta))$.\par
{\bf Case 6 - 1 - 2 - 2.}\,$\widetilde{C}_{2}(w_{N}(\underline{u})\cdot w'_{0}(\underline{u})\cdot s)$ is not $\widetilde{C}_{2}(w'_{0}(\underline{u})\cdot s)$\\
Then, we say that the element $w'_{0}(\underline{u}) \cdot \Delta_{1, C_{1}(w_{N}(\underline{t}))}\cdot \Delta_{2, \widetilde{C}_{2}(w_{N}(\underline{u})\cdot w'_{0}(\underline{u})\cdot s)}$ belongs to the set $\mathrm{Trans}(w)$. Since it devides $\Delta$ from the left, we have verified the property $(\mathrm{P}(w; \Delta))$. \par
{\bf Case 6 -- 2.}\,\,$l_{0} = \{ u_{\sigma} \}$ ($u_{\sigma} \in L_{2}$).\\
We can verify the property $(\mathrm{P}(w; \Delta))$ in the same way as the case 6 -- 1.\par
These complete the proof. By verifying the property $(\mathrm{P}(w; \Delta))$ for an arbitrary element $w$ in the monoid $G^+_{m, n}$, we show that the conjugacy problem in it is solvable. Thus, we conclude that the conjugacy problem in the group $G_{m, n}$ is solveble. 
\begin{question}{\it In \cite{[I2]}, the author studied a positive homogeneously presented cancellative monoid $H_n^{+}$ that carries a unique minimal fundamental element $\Delta$. Thanks to the reduction lemma for it, we easily show that the monoid $H_n^{+}$ does not satisfy the LCM condition and $\mathcal{F}(H_n^{+}) = \mathcal{QZ}(H_n^{+})\setminus \{ \varepsilon \}$. Hence, the idealistic subsemigroup $\mathcal{F}(H_n^{+}) (\subseteq \mathcal{QZ}(H_n^{+}))$ is singly generated by $\Delta$ and the monoid $H_n^{+}$ is tame. Is the property $(\mathrm{P}(w; \Delta))$ satisfied for an arbitrary element $w$ in the monoid ?}
\end{question} 
\ \ \\
\emph{Acknowledgement.}\! 
The author is grateful to Kyoji Saito for fruitful discussions and valuable comments. The author is grateful to Toshitake Kohno for his encouragement. This research is supported by JSPS Fellowships for Young Scientists $(24\cdot10023)$. This researsh is also supported by World Premier International Research Center Initiative (WPI Initiative), MEXT, Japan.

\begin{flushright}
\begin{small}

Department of Mathematical Sciences, \\
University of Tokyo, \\
3-8-1 Komaba Meguro-ku Tokyo, 153-8914 Japan \\
e-mail address :  tishibe@ms.u-tokyo.ac.jp
\end{small}
\end{flushright}
\end{document}